\DeclareMathOperator\supp{supp}
\newtheorem{lemma}{Lemma}[section]
\newtheorem{remark}{Remark}[section]
\numberwithin{equation}{section}
\newtheorem{theorem}{Theorem}[section]
\newtheorem{proposition}[theorem]{Proposition}
\newtheorem{corollary}[theorem]{Corollary}
\begin{document}
\title[Bochner-Riesz commutators on M\'etivier groups]{ Bochner-Riesz commutators on M\'etivier groups: boundedness and compactness}

\author[Md N. Molla, J. Singh]
{Md Nurul Molla \and Joydwip Singh} 

\address[Md N. Molla]{Department of Mathematics, Indian Institute of Science Education and Research Kolkata, Mohanpur--741246, West Bengal, India.}
\email{nurul.pdf@iiserkol.ac.in}

\address[J. Singh]{Department of Mathematics and Statistics, Indian Institute of Science Education and Research Kolkata, Mohanpur--741246, West Bengal, India.}
\email{js20rs078@iiserkol.ac.in}

\subjclass[2020]{43A85, 22E25, 42B15}

\keywords{Commutators, Bochner-Riesz means, M\'etivier groups, sub-Laplacian, Spectral multipliers, Compact operators}

\begin{abstract} In this paper, we prove the boundedness and compactness properties of Bochner-Riesz commutator associated to the sub-Laplacians on M\'etivier groups. We show that the smoothness parameter can be expressed in terms of the topological dimension rather than the homogeneous dimension of the M\'etivier groups.
\end{abstract}

\maketitle

\section{Introduction and statement of main results}
Let $G$ be a two-step stratified Lie group with $\mathfrak{g}$ as its Lie algebra, that is to say that $G$ is connected, simply connected nilpotent Lie group and the Lie algebra $\mathfrak{g}$ endowed with a vector space decomposition $\mathfrak{g} = \mathfrak{g}_1\oplus \mathfrak{g}_2$ into two non-trivial subspaces $\mathfrak{g}_1, \mathfrak{g}_2 \subseteq \mathfrak{g}$ such that $[\mathfrak{g}_1, \mathfrak{g}_1]$ = $\mathfrak{g}_2$ and $\mathfrak{g}_2$ is the center of $\mathfrak{g}$. We call $\mathfrak{g}_1$, $\mathfrak{g}_2$ as first and second layer respectively and let $\text{dim}\ \mathfrak{g}_1 =d_1$, $\text{dim}\ \mathfrak{g}_2 = d_2$. Let us assume that $X_1, \ldots , X_{d_1}$ be a basis of the first layer $\mathfrak{g}_1$ and $T_1, \ldots, T_{d_2}$ be a basis of the second layer $\mathfrak{g}_2$.  We further assume that we have an inner product $\langle \cdot\,, \cdot\rangle$ on $\mathfrak{g}$ such that $X_1, \ldots , X_{d_1}, T_1, \ldots, T_{d_2} $ is an orthonormal basis of $\mathfrak{g}$. The inner product $\langle \cdot\,, \cdot\rangle$ induces a norm on the dual space $\mathfrak{g}_2 ^{*}$ of $\mathfrak{g}_2$, which we denote by $|\cdot|$. Then any element $\mu \in\mathfrak{g}_2^{*}\setminus\{0\}$ gives rise to a skew-symmetric bilinear form $ \omega_{\mu}(x, x')= \mu([x,x'])$ for $ x, x' \in \mathfrak{g}_1$. Let $J_{\mu}$ be the skew-symmetric endomorphism such that
\begin{align*}
    \omega_{\mu}(x, x')=  \langle J_{\mu}x, x' \rangle , \quad x, x' \in \mathfrak{g}_1. 
\end{align*}
We then say $G$ is a M\'etivier group if $J_{\mu}$ is invertible for all $\mu \in\mathfrak{g}_2 ^{*}\setminus\{0\}$, and a Heisenberg type group if $J_{\mu} ^2 = - |\mu|^2\ \text{id}_{\mathfrak{g}_1}$ for all $\mu \in \mathfrak{g}_2 ^{*}\setminus \{0\}$.  In the later case, the group will be denoted by $\mathbb{H}$. In what follows, $G$ will denote the M\'etivier groups unless or otherwise specified. Note that the class of M\'etivier group contains Heisenberg type group, in fact the containment is strict, see \cite{Muller_Seeger_Singular_Spherical_maximal_operator_Nilpotent_2004} for more details. 
 
Corresponding to the basis $X_1, \ldots, X_{d_1}$ 
of the first layer $\mathfrak{g}_1$, we consider the associated sub-Laplacian $\mathcal{L}$ on $G$ which is defined by
\begin{align*}
     \mathcal{L}:= -(X_1 ^2 + \cdots + X_{d_1} ^2).
\end{align*}
The sub-Laplacian $\mathcal{L}$ is positive and (essentially) self-adjoint on $L^2(G)$. 
 Hence,  via functional calculus,  given a Borel measurable function $F: \mathbb{R} \rightarrow \mathbb{C}$ one can define spectral multiplier operator
 \begin{align*}
     F(\mathcal{L})= \int_{0} ^{\infty} F(\lambda) \, dE(\lambda),
 \end{align*}
where $E$ denotes the spectral measure of $\mathcal{L}$. Then from spectral theory, $F(\mathcal{L})$ is a bounded operator on $L^2(G)$ if and only if $F$ is $E$-essentially bounded. However, for $p \neq 2$, it is well known that the boundedness condition on $F$ alone is not enough to get the $L^p$-boundedness of $F(\mathcal{L})$, one requires more regularity on the multiplier $F$. In the context of Euclidean space $\mathbb{R}^n$, one such sufficient condition is given by celebrated Mihlin-H\"ormander multiplier theorem, which implies the $L^p$-boundedness of the spectral multiplier $F(-\Delta)$, which is basically radial Fourier multiplier. It states that $F(-\Delta)$ is bounded on $L^p(\mathbb{R}^n)$ for $1<p<\infty$, whenever
\begin{align}
\label{Mihlin-Hormander type condition}
    \|F\|_{L^2_{s,\text{sloc}}}:= \sup_{t>0} \|F(t \cdot)\eta\|_{L_s^2(\mathbb{R})} < \infty
\end{align}
for some $s>n/2$, where $\eta$ is some smooth function supported in $(0, \infty)$. In fact the condition $s>n/2$ is sharp (see \cite{Sikora_Wright_Imaginary_Power_Laplacian_2001}). After that, Mihlin-H\"ormander multiplier theorem has been generalized in many different settings. For instance, in the setting of stratified Lie group $\mathcal{G}$, such results were independently proved by  Mauceri and Meda \cite{Mauceri_Meda_Multipliers_Stritified_group_1990} and by Christ \cite{Christ_Spectral_multiplier_Nilpotent_1991}. In particular, they showed that for the sub-Laplacian $L$ on $\mathcal{G}$, the operator $F(L)$ extends to a bounded operator on $L^p(\mathcal{G})$ for $p\in (1, \infty)$ whenever $F$ satisfies (\ref{Mihlin-Hormander type condition}) for some $s>Q/2$, where $Q$ is the homogeneous dimension of $\mathcal{G}$. For all homogeneous sub-Laplacian on the Heisenberg group $\mathbb{H}$, Muller and Stein \cite{Muller_Stein_Spectral_Multiplier-Heisenberg_1994} proved that the smoothness condition $s>Q/2$ can be replaced by $s> d/2$, where $d=d_1+d_2$ being the topological dimension of $\mathbb{H}$. In case of Heisenberg type group, similar result is independently obtained by Hebisch \cite{Hebisch_Spectral_Multiplier_Heisenberg_1993}.  Moreover, the threshold $s> d/2$ for $\mathbb{H}$ turned out to be sharp in the sense that it can not  be improved further. This discovery resulted in a surge of attention among  many researchers and sparked numerous current research in the theory of spectral multiplier, see \cite{Martini_Sikora_Grushin_Weighted_Plancherel_2012}, \cite{Martini_Muller_Multiplier_Grushin_2014}, \cite{Martini_Spectral_Multiplier_Heisenberg_Reiter_2015}, \cite{Marini_Multiplier_polynomial_Growth_2012}, \cite{Ahrens_Cowling_Martini_Muller_Quaternionic_Sphere_2020}, \cite{Casarino_Ciatti_Martini_Grushin_Sphere_2019}, \cite{Cowling_Klima_Sikora_Kohn_Laplacian_On_Sphere_2011} and references therein. Let us remark that, however in general for any 2-step  Stratified Lie group, it is still unknown whether the homogeneous dimension $Q$ in smoothness condition can be replaced by topological dimension $d$. Nevertheless, in case of M\'etivier group $G$,  Martini \cite{Marini_Multiplier_polynomial_Growth_2012} proved that the threshold can be pushed down again from $Q/2$ to $d/2$. In particular, this result also implies that Bochner-Riesz means associated to $\mathcal{L}$,
\begin{align*}
    S^{\alpha}(\mathcal{L}) &:= (1-t\mathcal{L})_+^{\alpha}, \quad t>0
\end{align*}
is bounded on $L^p(G)$ for all $1\leq p \leq \infty$ whenever $\alpha>(d-1)/2$, and the result is sharp. Therefore if one wants to study the boundedness of $S^{\alpha}(\mathcal{L})$ for $0<\alpha \leq (d-1)/2$, one should not expect boundedness to be hold for all $p \in [1, \infty]$. Towards this direction very recently, Chen and Ouhabaz \cite{Chen_Ouhabaz_Bochner-Riesz_Grushin_2016} and Niedorf \cite{Niedorf_Bochner_Riesz_Grushin_2022} in the case of Grushin operators, Niedorf in Heisenberg type groups \cite{Niedorf_Spectral_Multiplier_Heisenber_Group_2024} and M\'etivier groups \cite{Niedorf_Metivier_group_2023} obtained the $p$-specific type Bochner-Riesz multiplier results. In fact for M\'etivier groups, Niedorf proved the following theorem.

For any $n \in \mathbb{N} \setminus \{0\}$, we define the Stein-Tomas exponent by $p_n=\frac{2(n+1)}{n+3}$. Let $p_{d_1, d_2} = p_{d_2}$ for $(d_1, d_2) \notin \{(8,6), (8,7) \}$, $p_{8,6} = 17/12$ and $p_{8,7} = 14/11$.

\begin{theorem}\cite[Theorem 1.2]{Niedorf_Metivier_group_2023}
\label{Theorem: Niedorf_Metivier_group}
    Let $1\leq p \leq p_{d_1, d_2}$. Suppose that $\alpha > d(1/p -1/2)- 1/2$ with $d=d_1+d_2$. Then the Bochner-Riesz means $S^{\alpha}(\mathcal{L})$ is bounded on $L^p(G)$ uniformly in $t\geq 0$.
\end{theorem}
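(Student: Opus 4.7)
The plan is to combine a dyadic decomposition of the Bochner-Riesz multiplier with a $p$-specific $L^p\to L^2$ restriction-type estimate for the spectral projectors of $\mathcal{L}$ that reflects the topological dimension $d=d_1+d_2$ rather than the homogeneous dimension $Q=d_1+2d_2$. First, since $\mathcal{L}$ is homogeneous of degree $2$ under the natural dilations on $G$, the operator $S^{\alpha}(\mathcal{L}) = (1-t\mathcal{L})_{+}^{\alpha}$ intertwines with dilations, so it suffices to prove the estimate at $t=1$ and uniformity in $t\geq 0$ is automatic.

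Fix a smooth bump $\phi$ supported in $[1/2,2]$ with $\sum_{k\geq 1}\phi(2^{k}s)=1$ on $(0,1]$ and write
\begin{equation*}
(1-\lambda)_{+}^{\alpha} \;=\; \psi_{0}(\lambda) \;+\; \sum_{k\geq 1} 2^{-k\alpha}\, m_{k}(\lambda),
\end{equation*}
where $m_{k}(\lambda) = 2^{k\alpha}\phi(2^{k}(1-\lambda))(1-\lambda)_{+}^{\alpha}$ is a smooth function, bounded uniformly in $k$ together with all its $\lambda$-derivatives (after rescaling), and supported on an interval of length $\sim 2^{-k}$ around $\lambda=1$. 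The low-frequency piece $\psi_{0}(\mathcal{L})$ is handled by Martini's sharp Mihlin-H\"ormander multiplier theorem on M\'etivier groups (which already uses the topological dimension $d$), so the main task is to prove, for $1\leq p\leq p_{d_{1},d_{2}}$, the single-scale bound
\begin{equation*}
\|m_{k}(\mathcal{L})\|_{L^{p}(G)\to L^{p}(G)} \;\lesssim\; 2^{-k\left(1/2-d(1/p-1/2)\right)},
\end{equation*}
since then the series $\sum_{k}2^{-k\alpha}\|m_{k}(\mathcal{L})\|_{L^{p}\to L^{p}}$ converges precisely under the hypothesis $\alpha>d(1/p-1/2)-1/2$.

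To establish this single-scale bound I would invoke a truncated restriction-type estimate: for every $F$ supported in an interval of length $2^{-k}$ around $1$,
\begin{equation*}
\|F(\mathcal{L})\|_{L^{p}(G)\to L^{2}(G)} \;\lesssim\; 2^{-k(d(1/p-1/2)-1/2)}\, \|F\|_{L^{\infty}}.
\end{equation*}
By duality this gives the corresponding $L^{2}\to L^{p'}$ bound, and decomposing $m_{k}$ as a product of two such localized functions (an argument that is standard once a restriction estimate with matching power of $2^{-k}$ is in hand) yields the required $L^{p}\to L^{p}$ estimate for $m_{k}(\mathcal{L})$. The Stein-Tomas exponent $p_{d_{1},d_{2}}$ enters precisely in this restriction estimate, with the exceptional pairs $(8,6)$ and $(8,7)$ reflecting the sharper endpoint one gets by invoking a Tomas-type argument rather than just Stein-Tomas interpolation in the joint spectral picture for $(\mathcal{L},-iT_{1},\dots,-iT_{d_{2}})$.

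The principal obstacle is the restriction-type Plancherel estimate itself: one must show that a spectral projector for $\mathcal{L}$ of width $2^{-k}$ near $\lambda=1$ has the $L^{p}\to L^{2}$ norm predicted by the topological dimension $d_{1}+d_{2}$, not the homogeneous dimension $d_{1}+2d_{2}$. I would attack this through the group Fourier transform on $G$: the Schr\"odinger representations $\pi_{\mu}$ (indexed by $\mu\in\mathfrak{g}_{2}^{*}\setminus\{0\}$) diagonalize $\mathcal{L}$ on Hermite-type bases, so $F(\mathcal{L})$ reduces to a family of twisted-convolution operators parametrized by $\mu$. Invertibility of the M\'etivier form $J_{\mu}$ gives uniform nondegeneracy estimates in $\mu$, which, combined with a Euclidean Stein-Tomas restriction in the central variable $\mu\in\mathbb{R}^{d_{2}}$ and the weighted Plancherel bounds of M\"uller-Seeger-Martini type, produces precisely the gain $2^{-k(d(1/p-1/2)-1/2)}$ with Stein-Tomas threshold $p_{d_{2}}$. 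Assembling the dyadic sum then completes the proof.
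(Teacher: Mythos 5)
The statement you are proving is cited from Niedorf's work and the present paper does not reprove it, but the paper's introduction explains precisely why your strategy breaks down, so the comparison is still informative.

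Your central step is the claimed $L^p\to L^2$ bound
\[
\|F(\mathcal{L})\|_{L^{p}(G)\to L^{2}(G)} \;\lesssim\; 2^{-k(d(1/p-1/2)-1/2)}\, \|F\|_{L^{\infty}}
\]
for $F$ supported on an interval of width $2^{-k}$ near $1$, where $d$ is the topological dimension. This is a genuine gap, in two respects. First, the exponent is not right even structurally: a single-scale $L^p\to L^2$ restriction bound for a band of width $\delta$ should behave like $\delta^{1/2}\|F\|_\infty$ (the $\delta^{1/2}$ coming from the $L^2$ norm of $F$ over the band); the power $d(1/p-1/2)-1/2$ you write does not arise there, and plugging your estimate into a "product of two localized pieces" argument produces an $L^p\to L^{p'}$ bound, not the $L^p\to L^p$ bound you need. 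The usual way to return from $L^2$ to $L^p$ is the finite-speed-of-propagation argument: $m_k$ has Fourier support of size $\sim 2^k$, so the kernel of $m_k(\sqrt{\mathcal{L}})$ is essentially supported at scale $2^k$, and H\"older's inequality on balls of radius $2^k$ introduces the volume factor $|B(0,2^k)|^{1/p-1/2}$. On a stratified group this factor is $2^{kQ(1/p-1/2)}$ with $Q=d_1+2d_2$, not $2^{kd(1/p-1/2)}$ — and that is the whole difficulty. Second, and more decisively, the paper is explicit (see the discussion around \eqref{restriction estimate}) that a Stein--Tomas restriction estimate of this strength for $\mathcal{L}$ on general M\'etivier groups is still an open problem; it is known only on Heisenberg-type groups. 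Your sketch via the Schr\"odinger representations, nondegeneracy of $J_\mu$, and a Euclidean Stein--Tomas step in $\mu$ is exactly the line of attack that has not yet been made to close, so it cannot be invoked as a black box.

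What Niedorf actually does (and what the present paper adapts in its commutator proofs) is quite different in two crucial places. One replaces the unavailable restriction estimate with a \emph{truncated} restriction estimate in which the spectrum is additionally dyadically localized along $T=(-T_1^2-\cdots-T_{d_2}^2)^{1/2}$, at the cost of a discrete norm of the multiplier (Proposition \ref{Theorem: Truncated Restriction Estimate strong form}). And the replacement of $Q$ by $d$ in the volume factor does not come from a sharper restriction inequality at all; it comes from exploiting the anisotropy of the sub-Riemannian ball via the inclusion \eqref{Decomposition of ball into Euclidean balls}, i.e.\ $B(0,R)\subseteq B^{|\cdot|}(0,CR)\times B^{|\cdot|}(0,CR^2)$, together with a further splitting of the first-layer variable at a scale dictated by the $T$-truncation. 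This is what lets one trade $2^{kQ(1/p-1/2)}$ for $2^{kd(1/p-1/2)}$. The weighted Plancherel estimate (Proposition \ref{Theorem : First layer weighted Plancherel}) then controls the error terms from that splitting, and Proposition \ref{prop: Radon-Hurwitz number} ($d_1>3d_2/2$ except for three exceptional pairs) is needed to make the resulting geometric series converge in the asserted range of $p$. None of these ingredients appear in your sketch, and without them the argument does not give the topological dimension in the smoothness threshold.
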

It is important to note that, in view of the result of \cite{Martini_Muller_Golo_Spectral_Multiplier_Lower_Regularity_2023}, the threshold for $\alpha$ in the above theorem is optimal and can not be decreased further.

In this paper, we study boundedness and compactness property of the Bochner-Riesz commutator generated by $ S^{\alpha}(\mathcal{L})$ and $b \in BMO^{\varrho}(G)$, the space of functions of bounded mean oscillation on $G$ (for definition see section \ref{sec: preli}). For $f \in \mathcal{S}(G)$, the Bochner-Riesz commutator associated to the sub-Laplacian $\mathcal{L}$ is denoted by $[b, S^{\alpha}(\mathcal{L})] $ and is defined by 
\begin{align*}
    [b, S^{\alpha}(\mathcal{L})]f = b\, S^{\alpha}(\mathcal{L})f - S^{\alpha}(\mathcal{L})(bf) .
\end{align*} 

Before presenting our results, it would be convenient to revisit some relevant results from the literature. In the classical setting, \cite{Lu_Yan_Bochner_Riesz_Book_2013} initiated the study of  Bochner-Riesz commutator and later continued by  \cite{Hu_Lu_Bochner_Riesz_Commutator_1996} and \cite{Lu_Xia_Bochner_Riesz_Commutator_2007}. In a very recent article \cite{Chen_Tian_Ward_Commutator_Bochner_Riesz_Elliptic_2021}, the authors obtained $L^q$-boundedness of commutators $[b, S^{\alpha}_R (L)] $ of a BMO function $b$
and the Bochner-Riesz means $S^{\alpha}_R (L)$ for a class of nonnegative self-adjoint elliptic operators $L$ on doubling metric measure space $(X, \mu)$ with  ball volume being polynomial type. To be precise, assuming that $L$ satisfies the finite speed of propagation property and the crucial  spectral measure estimate: 
 \begin{align}\label{spectral measure estimate}
    \|dE_{\sqrt{L}} (\lambda)  \|_{L^p \rightarrow L^{p'}} \leq C \lambda^{n(1/p - 1/p') -1}, \quad \lambda >0, 
\end{align}
for some $1 \leq p <2$, they obtained $L^q$-boundedness of $[b, S^{\alpha}_R (L)] $ for all $p< q< p'$ with the smooth condition $\alpha> n(1/p-1/2) -1/2$, $n$ being the homogeneous dimension of $X$. It is known that for metric measure spaces if the ball volume is polynomial type, then (\ref{restriction estimate}) is equivalent to the Stein-Tomas restriction condition:  for any $R > 0$ and all Borel functions $F$ such that $\supp F \subseteq [0,R]$, 
\begin{align}\label{restriction estimate}
\| F(\sqrt{L})\chi_{B(x, r)} \|_{L^p \rightarrow L^{2}} \leq C R^{n(\frac{1}{p} - \frac{1}{2})} \|\delta_R F\|_{L^2},
\end{align}
for all $x\in X$ and $r \geq 1/R$. For further details, see \cite[Proposition I.4]{Chen_Ouhabaz_Sikora_Yan_Restriction_Estimate_Bochner-Riesz_2016}.

In the present context of M\'etivier groups $G$, although the ball volume is polynomial type and the sub-Laplacian $\mathcal{L}$ satisfies the finite speed of propagation property, to the best of our knowledge, it is still remains open question whether one can establish (\ref{restriction estimate}) type estimates for $\mathcal{L}$. A lot of investigation has been made in this direction, for instance in case of Heisenberg group, the range of $p$ for which the restriction estimate hold is $p=1$, this is because of the one-dimensional center of the Heisenberg group \cite{Muller_restriction_Heisenberg_group_1990}. For Heisenberg type groups, dimension of the center is bigger than $1$, there restriction type estimate was known to be true, see \cite{Liu_Wang_Restriction_Heisenberg_Type_2011}, \cite{Thangavelu_Restriction_product_Heisenberg_group_1991}. But outside the class of Heisenberg type groups, unfortunately restriction estimate is still unknown, see \cite[Reviewer's remark]{Liu_Zhang_Metivier_joint_functional_2018}, also see remark of \cite{Niedorf_Spectral_Multiplier_Heisenber_Group_2024} on \cite{Casarino_Ciatti_Restriction_Metivier_groups_2013}. Therefore, it is a natural question whether $L^p$-boundedness of $[b, S^{\alpha}(\mathcal{L})]$ still holds with smoothness parameter $\alpha> Q(1/p-1/2) -1/2$, where $Q$ being  the homogeneous dimension of $G$. Because of the sub-Riemannian geometry of the underlying group, we can go one step further and ask whether smoothness parameter can be replaced by a weaker condition $\alpha >d(1/p- 1/2)- 1/2$, with $d$ as the topological dimension of $G$. We answer to this question positively, using a weaker version of restriction type estimate proved in \cite{Niedorf_Restriction_Stratified_group_2023}.

For all $d_1, d_2 \geq 1$, let $p_{d_1, d_2}$ be the same as just before the Theorem \eqref{Theorem: Niedorf_Metivier_group} and $L^2_{\beta}(\mathbb{R})$ denote the $L^2$ Sobolev space of order $\beta \geq 0$. Then following is our first main result of this paper.
\begin{theorem}
\label{Theorem: Multiplier for Commutator} 
Let $1\leq p \leq  p_{d_1, d_2}$ and $\beta>d(1/p-1/2)$. Then for any even Borel function $F$ with $\supp{F} \subseteq [-1,1]$ and $F \in L^2_{\beta}(\mathbb{R})$, we have
\begin{align*}
    \|[b, F(t \sqrt{\mathcal{L}})]f\|_{L^q(G)} &\leq C \|b\|_{BMO^{\varrho}(G)} \|F\|_{L^2_{\beta}(\mathbb{R})} \|f\|_{L^q(G)},
\end{align*}
    for all $p<q<p'$ and uniformly in $t \in (0, \infty)$.
\end{theorem}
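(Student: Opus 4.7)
The plan is to adapt the commutator strategy of Chen--Tian--Ward \cite{Chen_Tian_Ward_Commutator_Bochner_Riesz_Elliptic_2021} to the M\'etivier group setting. Their argument uses the sharp Stein--Tomas restriction estimate \eqref{restriction estimate} as its main analytic input; since such a sharp form is unavailable outside the Heisenberg-type case, I would replace it with Niedorf's weaker truncated restriction estimate from \cite{Niedorf_Restriction_Stratified_group_2023}, which sees the topological dimension $d$ in place of $Q$ and is exactly what produces the threshold $\beta > d(1/p-1/2)$ in the statement.

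The first step is a dyadic decomposition of $F$ on the Fourier side. Choose a smooth partition $1 = \sum_{k\geq 0}\psi_k$ with $\psi_k$ supported in $\{|\xi|\sim 2^k\}$ for $k\geq 1$, set $\widehat{F_k} = \widehat{F}\,\psi_k$, so that by Plancherel $\|F_k\|_{L^2} \lesssim 2^{-k\beta}\|F\|_{L^2_\beta}$. Using the Fourier inversion
\begin{align*}
F(t\sqrt{\mathcal{L}}) = \frac{1}{2\pi}\int_{\mathbb{R}} \widehat{F}(\xi)\cos(t\xi\sqrt{\mathcal{L}})\,d\xi
\end{align*}
together with the finite speed of propagation for $\cos(t\xi\sqrt{\mathcal{L}})$, the convolution kernel $K_k$ of $F_k(t\sqrt{\mathcal{L}})$ is supported in $\{\varrho(x,y)\lesssim 2^k t\}$. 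Feeding both facts into Niedorf's truncated restriction estimate yields, uniformly in $x\in G$ and $t>0$, a localized $L^p\to L^2$ bound on $F_k(t\sqrt{\mathcal{L}})$ whose norm decays like $2^{-k(\beta-d(1/p-1/2))}$.

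For the commutator itself I would use the pointwise identity
\begin{align*}
[b, F_k(t\sqrt{\mathcal{L}})]f(x) = (b(x)-b_{B_k(x)})\,F_k(t\sqrt{\mathcal{L}})f(x) - F_k(t\sqrt{\mathcal{L}})\bigl((b-b_{B_k(x)})f\bigr)(x),
\end{align*}
with $B_k(x) = B(x, 2^k t)$. The first summand is bounded pointwise by $|b(x)-b_{B_k(x)}|\,|F_k(t\sqrt{\mathcal{L}})f(x)|$, and its $L^p$-norm is controlled using the standard estimate $\|b-b_{B_k(x)}\|_{BMO^\varrho}\lesssim(1+k)\|b\|_{BMO^\varrho}$ together with the $L^q$-boundedness of $F_k(t\sqrt{\mathcal{L}})$ coming from the spectral multiplier theorem of Martini \cite{Marini_Multiplier_polynomial_Growth_2012}. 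The second summand is split over annular shells of radius $2^{k+j}t$ about $x$; on each shell, H\"older combined with the localized $L^p\to L^2$ bound of the previous paragraph and John--Nirenberg controlling $\|b-b_{B_k(x)}\|_{L^r}$ with at most an $(1+k+j)$ factor produces contributions decaying like $2^{-k(\beta-d(1/p-1/2))}$ times a polynomial in $k+j$, summable because the inequality $\beta>d(1/p-1/2)$ is strict. This gives $L^p$-boundedness of $[b, F(t\sqrt{\mathcal{L}})]$ directly; duality (using $[b,T]^* = [\bar b, T^*]$ and self-adjointness of the spectral multiplier) extends it to $L^{p'}$, and complex interpolation fills in the entire range $p<q<p'$.

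The principal obstacle is the interplay between the weaker restriction estimate and the commutator structure: one must deploy Niedorf's bound in precisely the localized form required by the dyadic pieces $F_k$, and must verify that the geometric decay $2^{-k(\beta-d(1/p-1/2))}$ persists through the commutator decomposition and strictly dominates the polylogarithmic losses from BMO mean oscillation across dyadic scales. This requires a careful calibration of an auxiliary Sobolev exponent $\beta_0$ strictly between $d(1/p-1/2)$ and $\beta$, and is where adapting the Chen--Tian--Ward argument to the M\'etivier setting with only the weak restriction estimate demands the most care.
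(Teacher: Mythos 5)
The overall architecture you sketch — dyadic Fourier decomposition $F = \sum_k F_k$, finite speed of propagation to localize each piece, a Chen--Tian--Ward--type reduction of the commutator to operator bounds on balls, and restriction estimates as the analytic engine, with summability in $k$ coming from the strict inequality $\beta > d(1/p-1/2)$ — is exactly what the paper does. But there is a genuine gap at the heart of the argument: you assert that Niedorf's truncated restriction estimate ``sees the topological dimension $d$ in place of $Q$'' and that feeding it into the dyadic pieces directly yields a localized $L^p\to L^2$ bound of size $2^{-k(\beta - d(1/p-1/2))}$. Neither claim is correct as stated. Niedorf's estimate (Proposition \ref{Theorem: Truncated Restriction Estimate strong form} in the paper) carries the factor $R^{Q(1/p-1/2)}$ — with $Q$, not $d$ — together with an additional decaying factor $2^{-Md_2(1/p-1/2)}$ coming from a secondary dyadic truncation along the spectrum of the central operator $T$. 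If you simply sum this estimate in $M$ without any further geometric input, the threshold you obtain is $\beta > Q(1/p-1/2)$, which is strictly worse than the claimed $\beta > d(1/p-1/2)$ since $Q = d_1+2d_2 > d_1+d_2 = d$.

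The passage from $Q$ to $d$ is the main technical content of the proof and is entirely absent from your proposal. What is needed is the sub-Riemannian geometry encoded in \eqref{Decomposition of ball into Euclidean balls}: the Carnot--Carath\'eodory ball $B(0,R)$ sits inside a product $B^{|\cdot|}(0,CR)\times B^{|\cdot|}(0,CR^2)$. One then decomposes $\supp(\chi_{\widetilde{B}_0} f)$ into roughly $2^{(i-M)d_1}$ smaller Euclidean balls in the first layer (of radius $\sim 2^M t$), applies the $M$-truncated restriction estimate piece-by-piece, and sums, trading the extra $M$-decay $2^{-Md_2(1/p-1/2)}$ against the first-layer ball count. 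An error term — where the output lives far from the input ball — must be disposed of separately via a first-layer weighted Plancherel estimate (Proposition \ref{Theorem : First layer weighted Plancherel}). Only after this bookkeeping does the exponent $d$ emerge, and even then the range of $p$ picks up the Radon--Hurwitz constraint $d_1>3d_2/2$ (Proposition \ref{prop: Radon-Hurwitz number}), which forces the exceptional cases $(d_1,d_2)\in\{(4,3),(8,6),(8,7)\}$ and, for $(4,3)$, an additional second-layer slicing argument to recover the full Stein--Tomas range. None of this can be bypassed by a generic annular decomposition around each point $x$ as you describe; the balls must be chosen as a fixed family at the correct scale in the first layer, not centered at the moving point. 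As written, your argument would prove a weaker theorem with $\beta > Q(1/p-1/2)$.

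A secondary issue: you propose to control the first summand of the commutator via ``the $L^q$-boundedness of $F_k(t\sqrt{\mathcal{L}})$ coming from the spectral multiplier theorem of Martini.'' Martini's Mihlin--H\"ormander theorem does not give uniform-in-$k$ bounds for the dyadic pieces $F_k$ without reintroducing the Sobolev threshold $s > d/2$, so this step would not close. The paper instead bounds the operator pieces $L^p\to L^2$ via the restriction estimate on the balls produced by Lemma \ref{Deduction: commutator to operator}, then uses H\"older against the $L^{2q/(2-q)}$-norm of $b - b_{B_0}$ (with the John--Nirenberg characterization \eqref{Cherecterisation of BMO}), rather than invoking $L^q$-boundedness of the pieces.
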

The following result is a consequence of the above theorem, which gives the boundedness of Bochner-Riesz commutator $[b, S^{\alpha}(\mathcal{L})]$.
\begin{theorem}
\label{Theorem: Bochner-Riesz commutator on Metivier}
    If $1\leq p \leq  p_{d_1, d_2}$ and $\alpha>d(1/p-1/2) - 1/2$,  then the Bochner-Riesz commutator $[b, S^{\alpha}(\mathcal{L})]$ is bounded on $L^q(G)$ whenever $p< q< p'$.
\end{theorem}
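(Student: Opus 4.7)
The plan is to derive Theorem \ref{Theorem: Bochner-Riesz commutator on Metivier} essentially directly from Theorem \ref{Theorem: Multiplier for Commutator} by identifying the Bochner-Riesz multiplier as an admissible function $F$ of $\sqrt{\mathcal{L}}$. Setting $F(\lambda) := (1-\lambda^2)_+^{\alpha}$, which is even, continuous, and supported in $[-1,1]$, one has
\begin{align*}
S^{\alpha}(\mathcal{L}) \;=\; (1-t\mathcal{L})_+^{\alpha} \;=\; F\bigl(\sqrt{t}\,\sqrt{\mathcal{L}}\bigr),
\end{align*}
so that $[b, S^{\alpha}(\mathcal{L})]$ coincides, up to the reparametrization $t \leftrightarrow \sqrt{t}$ of the dilation parameter, with the commutator $[b, F(t\sqrt{\mathcal{L}})]$ controlled by Theorem \ref{Theorem: Multiplier for Commutator}. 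As $t$ ranges over $(0,\infty)$ so does $\sqrt{t}$, and uniform-in-$t$ estimates therefore transfer verbatim.

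Next, I would verify the $L^2$ Sobolev regularity of $F$. The function $(1-\lambda^2)_+^{\alpha}$ has H\"older-type singularities of order $\alpha$ at $\lambda = \pm 1$, and a classical one-variable calculation (via either the explicit Bessel-type Fourier transform asymptotic $\widehat{F}(\xi) \sim c_\alpha |\xi|^{-\alpha - 1/2}$ or, equivalently, a smooth dyadic decomposition near the endpoints combined with Plancherel) shows that $F \in L^2_\beta(\mathbb{R})$ for every $\beta < \alpha + 1/2$.

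Finally, given the hypothesis $\alpha > d(1/p - 1/2) - 1/2$, one can select $\beta$ satisfying
\begin{align*}
d\bigl(\tfrac{1}{p}-\tfrac{1}{2}\bigr) \;<\; \beta \;<\; \alpha + \tfrac{1}{2},
\end{align*}
and then Theorem \ref{Theorem: Multiplier for Commutator} applied to this pair $(F, \beta)$ yields the desired $L^q$-boundedness of $[b, S^{\alpha}(\mathcal{L})]$ for all $p<q<p'$, uniformly in $t \in (0,\infty)$. There is no substantive analytic obstacle in this deduction: all the hard work is absorbed into Theorem \ref{Theorem: Multiplier for Commutator}, and the only mildly non-trivial ingredient is the standard one-dimensional Sobolev regularity of $(1-\lambda^2)_+^{\alpha}$.
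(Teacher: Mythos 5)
Your proposal is correct and follows essentially the same route as the paper: apply Theorem~\ref{Theorem: Multiplier for Commutator} to $F(\lambda)=(1-\lambda^2)_+^{\alpha}$ (with the $t\leftrightarrow\sqrt{t}$ reparametrization), then invoke the standard one-dimensional fact that $(1-\lambda^2)_+^{\alpha}\in L^2_{\beta}(\mathbb{R})$ precisely when $\beta<\alpha+1/2$ and choose $\beta$ strictly between $d(1/p-1/2)$ and $\alpha+1/2$. One minor slip: the large-$\xi$ decay of $\widehat{F}$ is $|\xi|^{-\alpha-1}$, not $|\xi|^{-\alpha-1/2}$ (the Bessel formula gives $\widehat{F}(\xi)=c_\alpha|\xi|^{-\alpha-1/2}J_{\alpha+1/2}(|\xi|)$ and $J_\nu(r)\sim r^{-1/2}$ contributes the extra half power); with $|\xi|^{-\alpha-1/2}$ you would only get $\beta<\alpha$. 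The conclusion $\beta<\alpha+1/2$ you ultimately use is nonetheless the correct one, and it also follows from the dyadic/Plancherel argument you mention, so the deduction stands.
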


Since Heisenberg type groups falls into the class of M\'etivier groups, in particular, the above result is also applicable there. Notice that in the above theorems the range of $p$ is in between $1$ and $p_{d_2}$, for all $d_1, d_2 \geq 1$ except the two points $(8,6)$ and $(8,7)$. For $(d_1, d_2) \in \{(8,6), (8,7) \}$, $p_{6}=14/9>17/12$ and $p_{7}=8/5>14/11$. However, if one restricts attention to the sub-Laplacian on the Heisenberg type groups, then we can improve the above result for all $d_1, d_2 \geq 1$, with the range of $p$ is $1\leq p \leq p_{d_2}$. In fact, we have the following result.

\begin{theorem}
\label{Theorem: Bochner-riesz commuator on Heisenber-type group}
    Let $\mathbb{H}$ be Heisenberg type group and $\mathcal{L}$ be its associated sub-Laplacian. If $1\leq p \leq  2(d_2 + 1)/(d_2 + 3)$ and $\alpha>d(1/p-1/2) - 1/2$,  then the Bochner-Riesz commutator $[b, S^{\alpha}(\mathcal{L})]$ is bounded on $L^q(\mathbb{H})$ whenever $p< q< p'$.
\end{theorem}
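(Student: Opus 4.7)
The strategy is parallel to that of Theorem \ref{Theorem: Bochner-Riesz commutator on Metivier}, but it exploits the genuine Stein--Tomas restriction estimate that is available on Heisenberg type groups. The plan is first to upgrade Theorem \ref{Theorem: Multiplier for Commutator} in this setting and then to deduce the Bochner--Riesz bound by a dyadic decomposition of the profile $(1-s^2)_+^{\alpha}$.

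The first step is to establish the following variant of Theorem \ref{Theorem: Multiplier for Commutator} on $\mathbb{H}$: for $1 \leq p \leq 2(d_2+1)/(d_2+3)$, $\beta > d(1/p - 1/2)$, and any even Borel function $F$ with $\supp F \subseteq [-1,1]$,
\begin{equation*}
\bigl\|[b, F(t\sqrt{\mathcal{L}})]f\bigr\|_{L^q(\mathbb{H})} \leq C \|b\|_{BMO^{\varrho}(\mathbb{H})} \|F\|_{L^2_{\beta}(\mathbb{R})} \|f\|_{L^q(\mathbb{H})},
\end{equation*}
uniformly in $t>0$, for every $p<q<p'$. The proof follows the same architecture as that of Theorem \ref{Theorem: Multiplier for Commutator} (kernel decomposition, finite speed of propagation for $\mathcal{L}$, $BMO^{\varrho}$ pairing, and dyadic summation of off-diagonal $L^q$ bounds), with one essential change: the weak restriction-type estimate of \cite{Niedorf_Restriction_Stratified_group_2023} used in the M\'etivier case is replaced by the full Stein--Tomas spectral restriction inequality \eqref{restriction estimate}, which is known on $\mathbb{H}$ in the whole range $1 \leq p \leq p_{d_2}$ by \cite{Liu_Wang_Restriction_Heisenberg_Type_2011} and \cite{Thangavelu_Restriction_product_Heisenberg_group_1991}. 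This enlargement of the restriction range eliminates the two exceptional pairs $(d_1,d_2) \in \{(8,6),(8,7)\}$ that force a reduction of the endpoint in Theorem \ref{Theorem: Bochner-Riesz commutator on Metivier}.

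Given this multiplier theorem, the Bochner--Riesz estimate is obtained by a standard dyadic decomposition of $F_{\alpha}(s) := (1-s^2)_+^{\alpha}$. Fix $\phi \in C_c^{\infty}(\mathbb{R})$ supported in $[1/4,1]$ with $\sum_{j \geq 0} \phi(2^j u) = 1$ for $u \in (0,1]$, and write
\begin{equation*}
F_{\alpha}(s) = \phi_0(s) + \sum_{j \geq 1} \phi_j(s), \qquad \phi_j(s) := F_{\alpha}(s)\,\phi\bigl(2^j(1-s^2)\bigr),
\end{equation*}
so that each $\phi_j$ is even, supported in $[-1,1]$, and concentrated in a band of width $\sim 2^{-j}$ near $s=\pm 1$. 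A rescaling argument yields $\|\phi_j\|_{L^2_{\beta}} \lesssim 2^{j(\beta - \alpha - 1/2)}$. The hypothesis $\alpha > d(1/p-1/2) - 1/2$ makes the interval $\bigl(d(1/p-1/2),\ \alpha + 1/2\bigr)$ nonempty; choosing any $\beta$ in this interval and applying the multiplier theorem above to each $\phi_j(\sqrt{t\mathcal{L}})$ produces a convergent geometric series and hence the claimed $L^q$-bound for $[b, S^{\alpha}(\mathcal{L})]$.

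The main obstacle lies in the first step: re-running the proof of Theorem \ref{Theorem: Multiplier for Commutator} with the true restriction estimate in place. One must verify that every point at which the Niedorf weak restriction bound is invoked remains valid, without loss in the exponent, when \eqref{restriction estimate} is used on $\mathbb{H}$, so that the endpoint $p = p_{d_2}$ is indeed attained for all $d_1, d_2 \geq 1$. Once this multiplier theorem is secured, the passage to the Bochner--Riesz bound via the dyadic summation above is routine.
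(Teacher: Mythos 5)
Your proposal identifies the right high-level idea — that the exceptional pairs $(d_1,d_2)\in\{(8,6),(8,7)\}$ stem from the discrete norm $\|\cdot\|_{2^M,2}$ in the truncated restriction estimate, and that replacing that discrete norm by the ordinary $L^2$ norm removes them — but the specific substitution you make is not the right one, and as stated it would lose the crucial $d$-threshold.

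You propose to replace the weak truncated restriction bound (Proposition \ref{Theorem: Truncated Restriction Estimate strong form}) by the \emph{untruncated} Stein--Tomas estimate \eqref{restriction estimate}. The estimate \eqref{restriction estimate} is stated for the polynomial-volume parameter $n$, which on a stratified group is the homogeneous dimension $Q=d_1+2d_2$. Running the argument of Theorem \ref{Theorem: Multiplier for Commutator} with \eqref{restriction estimate} in place of a truncated estimate would therefore only deliver $\alpha>Q(1/p-1/2)-1/2$, not the claimed $\alpha>d(1/p-1/2)-1/2$. The passage from $Q$ to $d$ is precisely what the dyadic truncation along the spectrum of $T$ (the decomposition $F_M(\mathcal{L},T)$) buys: the factor $2^{-Md_2(1/p-1/2)}$ in the truncated estimate is summed over $-l_0\leq M\leq i$ together with the layer-wise ball decomposition \eqref{Equation: Decomposition of ball into smaller balls}, and this combination recovers a gain of $d_2$ in the exponent. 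Discarding the truncation while keeping ``the same architecture'' is not internally consistent — you cannot apply \eqref{restriction estimate} to $F_M(\mathcal{L},T)$; you need a restriction estimate that is itself $T$-truncated. What the paper actually uses is Proposition \ref{Prop: restriction estimate for Heisenberg type}: the same $T$-truncated estimate as Proposition \ref{Theorem: Truncated Restriction Estimate strong form} but with $\|\delta_R F\|_{L^2}$ on the right-hand side, which follows on Heisenberg type groups from \cite[Theorem 3.2]{Niedorf_Spectral_Multiplier_Heisenber_Group_2024}, not from the classical restriction theorems of \cite{Liu_Wang_Restriction_Heisenberg_Type_2011,Thangavelu_Restriction_product_Heisenberg_group_1991}. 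Removing the $2^{(i-M)\tilde{s}\theta_p}$ loss in \eqref{equation: discrete norm estimate process 1} is then exactly what lets the series over $M$ converge for all $p\leq p_{d_2}$ using only $d_1>d_2$, rather than the Radon--Hurwitz inequality $d_1>3d_2/2$, which is what fails at $(8,6)$ and $(8,7)$.

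Two smaller remarks. First, the paper observes that outside the two exceptional pairs, Theorem \ref{Theorem: Bochner-riesz commuator on Heisenber-type group} is already contained in Theorem \ref{Theorem: Bochner-Riesz commutator on Metivier}, since $p_{d_1,d_2}=p_{d_2}$ there; your plan to re-derive the full multiplier theorem from scratch for all $(d_1,d_2)$ is correct but redundant. Second, the dyadic band decomposition of $F_\alpha(s)=(1-s^2)_+^\alpha$ in your last step is unnecessary: Theorem \ref{Theorem: Multiplier for Commutator} (and its Heisenberg-type variant) already applies to any compactly supported $F\in L^2_\beta$, so one just sets $F=F_\alpha$, $t=1$, and uses that $F_\alpha\in L^2_\beta$ if and only if $\alpha>\beta-1/2$.
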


Our next discussion will be regarding the compactness property of the commutators. In \cite{Uchiyama_compactness_commutator_1978}, Uchiyama proved that the commutator  of Riesz transform on $\mathbb{R}^n$ is compact on $L^p(\mathbb{R}^n)$ with $1<p< \infty$ if and only if the symbol belongs to $CMO(\mathbb{R}^n)$, the closure of $C_c^{\infty}(\mathbb{R}^n)$ functions in the $BMO(\mathbb{R}^n)$ norm. In recent times, the compactness property of commutators for many different class of operators in various settings has been studied widely, such as the Riesz transform associated with the sub-Laplacian on stratified Lie groups, the Riesz transform associated with Bessel operator, Bochner-Riesz commutator on $\mathbb{R}^n$, the Cauchy’s integrals on $\mathbb{R}$, and the Cald\'eron-Zygmund operator with homogeneous kernel to name a few. Interested readers are referred to \cite{Chen_Duong_Li_Wu_Compactness_Riesz_Transform_Stratified_group_2019}, \cite{Duong_Li_Mo_Wu_Yang_compactness_Bessel_2018}, \cite{Bu_Chen_Hu_Bochner_Riesz_commutator_2017}, \cite{Chaffee_Chen_Han_Torres_Ward_Compact_bilinear_2018}, \cite{Tao_Yang_Yuan_Zhang_compact_ball_Banach_2023} and reference therein for further details. Our another aim in this paper is to study the compactness property of $[b, S^{\alpha}(\mathcal{L})]$ in the setting of M\'etivier groups $G$. Let $CMO^{\varrho}(G)$ be the closure of $C^{\infty} _c (G)$ under the norm of $BMO^{\varrho}(G)$. Then the following is the second main result of this paper.
\begin{theorem}
\label{Theorem: Compactness of Bochner-Riesz commutator}
    Let $1\leq p \leq  p_{d_1, d_2}$ and $\alpha>d(1/p-1/2)-1/2$. If $b \in CMO^{\varrho}(G)$, then the Bochner-Riesz commutator $[b, S^{\alpha}(\mathcal{L})]$ is a compact operator on $L^q(G)$ for all $p<q<p'$.
\end{theorem}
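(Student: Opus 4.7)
The plan follows the standard three-part scheme for compactness of commutators: density reduction, heat-kernel splitting into a compact piece plus a small-norm remainder, and compact embedding arguments for the smoothed piece. First, since $CMO^\varrho(G)$ is by definition the $BMO^\varrho$-closure of $C_c^\infty(G)$, pick $b_n \in C_c^\infty(G)$ with $\|b - b_n\|_{BMO^\varrho} \to 0$. Theorem~\ref{Theorem: Bochner-Riesz commutator on Metivier} applied to $b - b_n$ gives $\|[b, S^\alpha(\mathcal{L})] - [b_n, S^\alpha(\mathcal{L})]\|_{L^q \to L^q} \to 0$. Because the compact operators form a norm-closed subspace of $\mathcal{B}(L^q(G))$, it suffices to establish compactness of $[b, S^\alpha(\mathcal{L})]$ for $b \in C_c^\infty(G)$.

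Second, for such $b$ and $\varepsilon > 0$, decompose $S^\alpha(\mathcal{L}) = T_\varepsilon + R_\varepsilon$ with $T_\varepsilon := e^{-\varepsilon \mathcal{L}} S^\alpha(\mathcal{L})$ and $R_\varepsilon := (I - e^{-\varepsilon \mathcal{L}}) S^\alpha(\mathcal{L})$. Writing $S^\alpha(\mathcal{L}) = F(\sqrt{t}\sqrt{\mathcal{L}})$ with $F(\lambda) = (1 - \lambda^2)_+^\alpha$, one gets $R_\varepsilon = F_\varepsilon(\sqrt{t}\sqrt{\mathcal{L}})$, where $F_\varepsilon(\lambda) := (1 - e^{-(\varepsilon/t)\lambda^2})(1 - \lambda^2)_+^\alpha$ is even and supported in $[-1,1]$. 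Picking $\beta \in (d(1/p - 1/2),\ \alpha + 1/2)$, which is nonempty by the hypothesis $\alpha > d(1/p - 1/2) - 1/2$, a routine estimate using $\|\partial_\lambda^j(1 - e^{-(\varepsilon/t)\lambda^2})\|_\infty = O(\varepsilon^{j/2})$ for $j \geq 1$ together with dominated convergence for $j = 0$ yields $\|F_\varepsilon\|_{L^2_\beta(\mathbb{R})} \to 0$. Theorem~\ref{Theorem: Multiplier for Commutator} then produces $\|[b, R_\varepsilon]\|_{L^q \to L^q} \to 0$ as $\varepsilon \to 0$.

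Third, to see that each $[b, T_\varepsilon]$ is compact, observe that $T_\varepsilon$ is a left-invariant convolution operator whose kernel $K_\varepsilon = p_\varepsilon \ast K^\alpha$ is smooth with fast decay in the Carnot--Carath\'eodory distance (via standard heat-kernel bounds on stratified Lie groups), and in particular $X_i T_\varepsilon$ is bounded on $L^q(G)$ for every horizontal vector field $X_i$. Split $[b, T_\varepsilon] = M_b T_\varepsilon - T_\varepsilon M_b$, with $M_b$ denoting multiplication by $b$. The family $\{M_b T_\varepsilon f : \|f\|_{L^q} \leq 1\}$ is supported in $\overline{\supp b}$ and bounded in the horizontal Sobolev space $W^{1,q}(G)$, hence relatively compact in $L^q(G)$ by the sub-Rellich--Kondrachov lemma on stratified groups. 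The family $\{T_\varepsilon(bf) : \|f\|_{L^q} \leq 1\}$ enjoys uniform fast decay in $x$ (from decay of $K_\varepsilon$ off $\supp b$) and uniform $L^q$-equicontinuity (from boundedness of $X_i T_\varepsilon$, which forces $\|L_y u - u\|_{L^q} \lesssim d_{cc}(y, e)$ uniformly over the family), hence is relatively compact by the Fr\'echet--Kolmogorov criterion. Thus $[b, T_\varepsilon]$ is compact, and combined with the small-norm bound on $[b, R_\varepsilon]$ this exhibits $[b, S^\alpha(\mathcal{L})]$ as the operator-norm limit of compact operators, hence itself compact on $L^q(G)$.

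The principal technical obstacle is the Sobolev bound $\|F_\varepsilon\|_{L^2_\beta} \to 0$ in Step~2, because $(1 - \lambda^2)_+^\alpha$ has Sobolev regularity only strictly below $\alpha + 1/2$ and the smooth factor $1 - e^{-(\varepsilon/t)\lambda^2}$ does not vanish in high Sobolev norms. For non-integer $\beta$ the cleanest route is complex interpolation between $\beta = 0$ (where convergence to zero is immediate from dominated convergence) and an exponent strictly below $\alpha + 1/2$ (where the Sobolev norm of the smooth factor is uniformly controlled). The gap $\alpha + 1/2 - d(1/p - 1/2) > 0$ afforded by the hypothesis is exactly what enables this interpolation.
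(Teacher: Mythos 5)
Your proposal takes a genuinely different route from the paper: you smooth the operator via a heat semigroup factor, reduce to $b\in C_c^\infty(G)$, and then argue compactness of $[b,T_\varepsilon]$ via Rellich--Kondrachov for $M_bT_\varepsilon$ and Fr\'echet--Kolmogorov for $T_\varepsilon M_b$. The paper instead decomposes the \emph{multiplier} by a Littlewood--Paley partition on the Fourier transform side, $F=\sum_iF^{(i)}$ with $\widehat{F^{(i)}}$ supported where $|\tau|\sim 2^i$, so that each $F^{(i)}$ is \emph{smooth} (unlike $F$, which is only H\"older-$\alpha$); it then verifies the Kolmogorov--Riesz criterion for each $[b,F^{(i)}(\sqrt{\mathcal L})]$ directly, using the pointwise and gradient kernel estimates from Proposition~\ref{Proposition: Weighted Plancherel with L infinity condition} together with Hardy--Littlewood maximal function control, and finally passes to the limit in operator norm.

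There is a genuine gap in your Step~3 for the $T_\varepsilon M_b$ half. You claim the kernel $K_\varepsilon=p_\varepsilon\ast K^\alpha$ is ``smooth with fast decay'' and that this gives uniform decay of $T_\varepsilon(bf)$ at infinity. The smoothness is fine (the heat factor regularizes the kernel near the origin), but the heat factor does \emph{not} improve decay at infinity: the multiplier $e^{-\varepsilon\lambda^2}(1-t\lambda^2)_+^\alpha$ still has a H\"older-$\alpha$ singularity at the boundary of its support, and this limits the kernel decay. With the tools available in the paper --- Proposition~\ref{Proposition: Weighted Plancherel with L infinity condition} --- one gets only $|K_\varepsilon(x,u)|\lesssim(1+\|(x,u)\|)^{-\beta}$ for $\beta<\alpha$, since $\|\delta_RG_\varepsilon\|_{L^\infty_s}=\infty$ once $s\ge\alpha$ (roughly). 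The Kolmogorov--Riesz decay condition (2) for the family $\{T_\varepsilon(bf):\|f\|_{L^q}\le1\}$ then requires $\beta q>Q$, i.e.\ $\alpha>Q/q$, which fails in the allowed regime: for instance at $p=1$ the hypothesis only gives $\alpha>(d-1)/2$ while $Q/q>Q/2>d/2>(d-1)/2$ for any $q\in(1,2)$, since $Q=d_1+2d_2>d$. The same limitation is precisely what the paper's decomposition circumvents: each $F^{(i)}$ is a Schwartz truncation of $F$, so $\|\psi F^{(i)}\|_{L^\infty_s}$ is finite for \emph{every} $s$ (at the price of an $i$-dependent constant, see \eqref{equation: F^{i} with psi}), and the pieces $\varphi_\iota F^{(i)}$ carry the rapid decay \eqref{equation: decay of Fi with phi} needed for condition~(2). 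Your Step~2 (the Sobolev estimate $\|F_\varepsilon\|_{L^2_\beta}\to0$) and the $M_bT_\varepsilon$ half of Step~3 (factoring $X_iT_\varepsilon=X_ie^{-\varepsilon\mathcal L}\,S^\alpha(\mathcal L)$ to get $W^{1,q}$-boundedness) are both sound, so the scheme would go through if you replaced the heat-kernel cut by a decomposition that actually smooths the multiplier near its singular endpoint --- which is exactly what the Fourier-side Littlewood--Paley split does.
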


\begin{remark}
    It is worth mentioning that all the above results, Theorem \ref{Theorem: Multiplier for Commutator}, Theorem \ref{Theorem: Bochner-Riesz commutator on Metivier}, Theorem \ref{Theorem: Bochner-riesz commuator on Heisenber-type group} and Theorem \ref{Theorem: Compactness of Bochner-Riesz commutator} are stated with the smoothness parameter $\alpha$ in terms of the the topological dimension, $d$ of the underlying group $G$. The smoothness parameter what we get here is $\alpha>d(1/p-1/2)-1/2$, which is same as the Bochner-Riesz means appeared in \cite[Theorem 1.1]{Niedorf_Metivier_group_2023} and \cite[Theorem 1.1]{Niedorf_Spectral_Multiplier_Heisenber_Group_2024}, and which are sharp in view of \cite{Martini_Muller_Golo_Spectral_Multiplier_Lower_Regularity_2023}. One can ask the similar questions for other sub-Laplacians. We are also working on Grushin operator, but this result will appear in somewhere else \cite{Molla_Singh_Bochner_Riesz_commutator_Grushin}. 
\end{remark}

\emph{An outline of our strategy for proving Theorem \ref{Theorem: Multiplier for Commutator}.}
As mentioned before, one of the major difficulties arising in our setting is the absence of spectral measure estimate (\ref{spectral measure estimate}) for $\mathcal{L}$. We would like to mention that the arguments given in \cite{Chen_Tian_Ward_Commutator_Bochner_Riesz_Elliptic_2021}  has a limited scope as even if one has (\ref{spectral measure estimate}) and follows  \cite{Chen_Tian_Ward_Commutator_Bochner_Riesz_Elliptic_2021}, yet one can merely obtain $L^q$-boundedness of $[b, S^{\alpha}(\mathcal{L})]$ with the smoothness condition being expressed in terms of the homogeneous dimension of $G$. One of the main tool that has been used there to reduce the commutator boundedness to the boundedness of the corresponding operator via \cite[Lemma 3.2]{Chen_Tian_Ward_Commutator_Bochner_Riesz_Elliptic_2021}. We can not use this lemma as a black box, as this will lead to the smoothness parameter in terms of $Q$. At this point, one may desire to use weighted restriction estimate for $\mathcal{L}$ but unfortunately, we still lack of such tools because such estimates does not hold here, see \cite[Section 8]{Niedorf_Spectral_Multiplier_Heisenber_Group_2024}. Thus a refinement of arguments in \cite{Chen_Tian_Ward_Commutator_Bochner_Riesz_Elliptic_2021} is necessary. Next, to circumvent the absence of (\ref{spectral measure estimate}), we make use of a weaker version of restriction type estimates,  Proposition \ref{Theorem: Truncated Restriction Estimate strong form}, which is due to \cite{Niedorf_Metivier_group_2023},  where the operator $F(\mathcal{L})$ is further truncated dyadically  along the spectrum of $T:= ( - (T_1^2 + \cdots + T_{d_2} ^2))^{1/2}$. In this approach, we also need to take care of a discrete norm of $F$ appearing in Proposition (\ref{Theorem: Truncated Restriction Estimate strong form}). Nevertheless, such an approach is limited to dealing with the error part of this additional truncation (see \eqref{Inequality: Error part for truncation}). In order to address the main part, the sub-Riemannian geometry of the underlying manifold is taken into account, in particular we use (\ref{Decomposition of ball into Euclidean balls}) crucially. This helps us to further split the main part into two summand (see \eqref{Decomposition of the operator in error and main part}). The negligible part of the summand is tackled through the weighted Plancherel estimates (Proposition \ref{Theorem : First layer weighted Plancherel}), whereas the significant part of the summand is taken care by the Proposition \ref{Theorem: Truncated Restriction Estimate strong form} and Proposition \ref{prop: Radon-Hurwitz number}, which is about the numerology of the M\'etivier groups. Because of Proposition \ref{prop: Radon-Hurwitz number}, we have to remove three points for the Stein-Tomas range. Nevertheless one can recover the full range of $p$, by carefully analyzing the point $(d_1, d_2)=(4,3)$ as done in \cite[Section 8]{Niedorf_Metivier_group_2023}.
 
\emph{An outline of our strategy for proving Theorem \ref{Theorem: Compactness of Bochner-Riesz commutator}.} A careful reading of the proof of the corresponding result in euclidean spaces \cite{Bu_Chen_Hu_Bochner_Riesz_commutator_2017} reveals that the proof relies on the  three main ingredients namely, Fourier transform estimates, approximation to the identity, and some refined estimates obtained by C. Fefferman in \cite{Fefferman_Spherical_multiplier_1973}. We would like to point out  that such arguments are not very effective in the setting of M\'etivier group $G$. A significant aspect of our proof is the use of the fact that the norm limit of compact operators is again a compact operator. First we decompose the operator in Fourier transform side and on each pieces we employ the Kolmogorov-Riesz compactness type theorem \ref{Theorem: Kolmogorov-Riesz compactness theorem}. In order to do so we need good pointwise control over the kernel and the gradient of the kernel. These estimates we have proved in Proposition \ref{Proposition: Weighted Plancherel with L infinity condition}, which helps us to get our required result in this settings as well which could be an independent interest.

The plan of the paper is as follows. In the next  Section (\ref{sec: preli}) we gather some well known results concerning the sub-Riemannian geometry of $G$, the space of bounded mean oscillation, spectral decomposition of $-J_{\mu}^2$, an explicit formula for convolution kernel, integration of a norm function and an useful result.  Section \ref{section: restriction and kernel estimate} is devoted to  develop some key devices related to truncated restriction estimates, weighted Plancherel estimates and pointwise weighted kernel estimates that we need for further study. Finally, in Section \ref{Section: proof of boundedness of commutator} and Section \ref{section: proof of copactness} we present the proof of main results of this paper.

Let $\mathbb{N}=\{0,1,2,\ldots\}$. For a Lebesgue measurable subset $E$ of $\mathbb{R}^d$, we denote by $\chi_E$ the characteristic function of the set $E$. We use letter $C$ to indicate a  positive constant independent of the main parameters, but may vary from line to line. We shall use the notation $f \lesssim g$ to indicate $f \leq Cg$ for some $C > 0$, and whenever $f \lesssim g\lesssim f$, we shall write $f \sim g$. Also, we write $f \lesssim_{\epsilon} g$ when the implicit constant $C$ may depend on a parameter like $\epsilon$.  Whenever $A>0$ is very small we frequently replace the quantity $2^A$ by $2^{\epsilon_1}$ for some $\epsilon_1 >0$ very small. Also, for any ball $ B := B(x, r)$ with centered at $x$ and radius $r$, the notation $\kappa B=B(x, \kappa r)$ stands for the concentric dilation of $B$ by $\kappa >0$. Moreover, for a measurable function $f$, we denote the average of $f$ over $B$ as  $f_{B} = \frac{1}{|B|} \int_{B} f(x) \, dx$. For any function $G$ on $\mathbb{R}$, we define $\delta_R G(\eta) = G(R \eta) $ for any $R>0$. For $f,g \in \mathcal{S}(G)$, let $f * g$ denotes their group convolution given by
\begin{align*}
    (f*g)(x,u) &= \int_G f(x',u') g((x',u')^{-1}(x,u)) \ d(x',u'), \quad \quad (x,u) \in G.
\end{align*}

\section{Preliminaries for the M\'etivier groups}\label{sec: preli}

Let $G$ be the M\'etivier group and $\mathfrak{g} = \mathfrak{g}_1 \oplus \mathfrak{g}_2$ be the stratification of its Lie algebra $\mathfrak{g}$. Let $X_1, \ldots, X_{d_1}$ and $T_1, \ldots, T_{d_2}$ be a basis of first layer $\mathfrak{g}_1$ and of the second layer $\mathfrak{g}_2$ respectively. By means of the global diffeomorphism $\exp: \mathfrak{g} \rightarrow G$, the group $G$ is identified with its Lie algebra $\mathfrak{g}$, which can be again identified with $\mathbb{R}^{d_1} \times \mathbb{R}^{d_2}$. Thus, we write an element $g$  of $G$  as $(x, u)$, where $x\in \mathbb{R}^{d_1}$ and $u \in \mathbb{R}^{d_2}$.  Moreover, the group product formula on $G$ is determined by Baker-Campbell-Hausdorff formula,
 \begin{align}
     (x, u)(x', u')= (x+x', u +u'+ \tfrac{1}{2} [x,x']). 
 \end{align}
And the identity element of $G$ is $(0,0)$, while the inverse $(x,u)^{-1}$ of an element $(x,u)$ is given by $(x,u)^{-1}= (-x, -u)$. The Haar measure $d(x,u)$ on $G$ corresponds to the Lebesgue measure on $\mathfrak{g}$, via the exponential map. There is a natural family of dilations $  \{ \delta_{R} : R >0\}$ on $G$ which is given by
 \begin{align}\label{Dilation on metivier}
     \delta_R (x, u)= (Rx, R^2u),  \quad (x,u) \in G.
 \end{align}
With respect to this dilation the following 
\begin{align}
\label{Def: homogeneous norm}
    \|(x,u)\| := (|x|^4 + |u|^2)^{1/4}, \quad (x,u) \in G,
\end{align}
defines a homogeneous norm on $G$.

Let $\varrho$ denote the Carnot-Carath\'eodory metric associated to the vector fields $X_1, \ldots, X_{d_1}$. In view of Chow–Rashevskii theorem, $\varrho$ induces the Euclidean topology of $G$. By the property that any two homogeneous norms on a homogeneous Lie group are equivalent, we have 
\begin{align*}
     \varrho(g,h) \sim \|g^{-1}h\|, \quad \text{for all}\ g,h \in G.
\end{align*}
Moreover, since these vector fields are left-invariant, $\varrho$ is left-invariant in the sense that  
\begin{align*}
    \varrho(ag, ah) &= \varrho(g,h) \quad \text{for all} \ a, g, h \in G.
\end{align*} 
 
Let $B^{\varrho} ((x,u),R)$ and $\Bar{B}^{\varrho} ((x,u),R)$ denote respectively the open and closed ball of radius $R>0$ centered at $(x,u) \in G$ with respect to $\varrho$. In the sequel, we omit the superscript $\varrho$ and simply write $B$ or $B((x,u),R)$ to indicate an open ball with respect to the Carnot-Carath\'eodory distance. The volume of the ball  is given by
\begin{align*}
    |B((x,u),R)| \sim R^Q |B(0,1)|,
\end{align*}
where $|\cdot|$ denote the Lebesgue measure and $Q=d_1+2d_2$. Thus, the metric measure space $(G, \varrho, |\cdot|)$ is indeed a space of homogeneous type, with homogeneous dimension $Q$. We also call $d=d_1+d_2$ to be the topological dimension of $G$.

From \eqref{Def: homogeneous norm}, one can easily see that, there exists a constant $C>0$ such that
\begin{align}
\label{Decomposition of ball into Euclidean balls}
    B(0, R) \subseteq B^{|\cdot|}(0, C R) \times B^{|\cdot|}(0, C R^2) \subseteq \mathbb{R}^{d_1} \times \mathbb{R}^{d_2} ,
\end{align}
where $B^{|\cdot|}(a, R)$ denotes the ball of radius $R$ and centered at $a$ with respect to Euclidean distance.

Note that on M\'etivier groups we always have $d_1>d_2$. This is true because the map $\mu \to \omega_{\mu}(\cdot, x')$ from $\mathfrak{g}_2^{*} \to (\mathfrak{g}_1/\mathbb{R}x')^{*}$ is injective for $x' \neq 0$. The next proposition tells us that for M\'etivier groups $d_1$, the dimension of the first layer is much larger than $d_2$, the dimension of the second layer, except for few cases.
\begin{proposition}\cite[Proposition 7.1]{Niedorf_Metivier_group_2023}
\label{prop: Radon-Hurwitz number}
Let $G$ be a  M\'etivier group of topological dimension $d=d_1+d_2$ with center of dimension $d_2$. Then for $(d_1, d_2) \notin \{ (4, 3), (8,6), (8, 7) \}$, we have $d_1 > 3d_2/2$.
\end{proposition}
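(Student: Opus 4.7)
The plan is to reduce the claim to the Radon-Hurwitz numerology together with a finite case check. The M\'etivier condition furnishes a linear injection $J \colon \mathfrak{g}_2^{*} \hookrightarrow \operatorname{Skew}(\mathbb{R}^{d_1})$ whose image, with the origin removed, lies inside $\operatorname{GL}(d_1, \mathbb{R})$. By the classical theorem of Hurwitz-Radon-Adams on linear subspaces of $M_{d_1}(\mathbb{R})$ all of whose nonzero elements are invertible, the dimension of any such subspace is at most $\rho(d_1)$, where $\rho$ denotes the Radon-Hurwitz function: writing $d_1 = 2^{4a+b} c$ with $c$ odd and $0 \leq b \leq 3$, one has $\rho(d_1) = 8a + 2^b$. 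This yields $d_2 \leq \rho(d_1)$. Two elementary refinements are also needed. First, since every nonzero $J_\mu$ is invertible and skew-symmetric, $d_1$ must be even (an invertible skew-symmetric matrix exists only in even dimensions, via $\det J_\mu = (-1)^{d_1} \det J_\mu$). Second, for any fixed $x_0 \in \mathbb{R}^{d_1} \setminus \{0\}$, the map $\mu \mapsto J_\mu x_0$ is injective into $x_0^{\perp}$ (using $\langle J_\mu x_0, x_0 \rangle = \omega_\mu(x_0, x_0) = 0$), so $d_2 \leq d_1 - 1$.

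With these tools in place, it suffices to verify $d_1 > \tfrac{3}{2} \rho(d_1)$ outside the three exceptional pairs. For even $d_1 \geq 10$ the inequality is automatic. Indeed, if $a \geq 1$ then $d_1 \geq 2^{4a+b}$ while $\rho(d_1) = 8a + 2^b$, and a direct inspection shows $d_1/\rho(d_1) \geq 2^{4a+b}/(8a+2^b) > 3/2$ for every admissible $(a,b)$ with $a \geq 1$. If instead $a = 0$ and $c \geq 3$ (so $d_1 \in \{6, 10, 12, 14, \dots\}$), then $\rho(d_1) \in \{2, 4, 8\}$ is dwarfed by $d_1$, again giving $d_1 > \tfrac{3}{2}\rho(d_1)$.

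It thus remains to inspect $d_1 \in \{2, 4, 8\}$. For $d_1 = 2$ the constraint $d_2 \leq d_1 - 1 = 1$ makes $d_1 > \tfrac{3}{2} d_2 = \tfrac{3}{2}$ trivially true. For $d_1 = 4$ one has $d_2 \leq \min(\rho(4), d_1 - 1) = 3$, and the inequality $4 > \tfrac{3}{2} d_2$ fails exactly at $d_2 = 3$. For $d_1 = 8$, $d_2 \leq \min(\rho(8), d_1 - 1) = 7$, and $8 > \tfrac{3}{2} d_2$ fails exactly for $d_2 \in \{6, 7\}$. These three pairs $(4,3)$, $(8,6)$, $(8,7)$ are precisely the exceptions in the proposition. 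The main obstacle is securing the Radon-Hurwitz bound $d_2 \leq \rho(d_1)$ in the skew-symmetric setting: Adams's original theorem is stated for Clifford-type anticommuting systems on $\mathbb{R}^{d_1}$, so one must either invoke its generalization to arbitrary linear subspaces of invertible matrices, or reduce to the Clifford case by applying a polar-decomposition argument to each $J_\mu$ (replacing $J_\mu$ by its orthogonal factor, which is still skew-symmetric with the same kernel). Once this bound is in hand, the rest of the argument is pure numerology.
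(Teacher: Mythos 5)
Your argument is sound and is, by all indications, the same one intended by the cited source: the paper gives no proof of its own, only the citation to Niedorf's Proposition 7.1, and the internal label \texttt{prop: Radon-Hurwitz number} makes it clear that the Radon--Hurwitz numerology is exactly what is expected. Your two ingredients are the right ones: (i) the injective linear map $\mu \mapsto J_\mu$ lands in a linear subspace of $M_{d_1}(\mathbb{R})$ all of whose nonzero elements are invertible, which by the Adams--Lax--Phillips theorem (the linear-subspace formulation of Adams's vector-field theorem) forces $d_2 \leq \rho(d_1)$; and (ii) the elementary bound $d_2 \leq d_1 - 1$, which the present paper in fact records separately just before stating the proposition. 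Your case check through $d_1 \in \{2,4,8\}$ and the growth comparison for larger even $d_1$ are correct and isolate exactly the three exceptional pairs.

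One small caveat on the hedge in your last paragraph: the proposed polar-decomposition reduction does not actually work as a substitute for Adams--Lax--Phillips. Replacing each $J_\mu$ by its orthogonal factor $U_\mu = J_\mu(-J_\mu^2)^{-1/2}$ indeed produces a skew-symmetric orthogonal complex structure, but the assignment $\mu \mapsto U_\mu$ is no longer linear, and the $U_\mu$ for distinct $\mu$ need not anticommute, so one does not obtain a Clifford system to which Adams's original theorem applies. The correct route is the one you name first: invoke the Adams--Lax--Phillips theorem directly (it requires only a linear subspace of matrices every nonzero member of which is nonsingular, with no anticommutation hypothesis). With that citation in place, your proof is complete.
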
  

The theory of $BMO$ spaces on on space of homogeneous type is well known. The function space of bounded mean oscillation on $G$ is denoted by $BMO^{\varrho}(G)$ and is defined by 
\begin{align}
    BMO^{\varrho}(G) =\{f \in L^1_{\text{loc}}(G): \|f\|_{BMO^{\varrho}(G)} < \infty \},
\end{align}
  where
\begin{align}
    \|f\|_{BMO^{\varrho}(G)}:= \sup_{B}\left(\frac{1}{|B|}\int_{B}|f(y,t)-f_{B}| \, d(y,t) \right) .
\end{align}

The celebrated John–Nirenberg inequality says that any function in $BMO^{\varrho}(G)$ has exponential decay of its distribution function. An important consequence of this distribution inequality is the $L^q$ characterization of $BMO^{\varrho}(G)$ norms:
\begin{align}
\label{Cherecterisation of BMO}
    \|f\|_{BMO^{\varrho}(G)} \sim \sup_{B}\left(\frac{1}{|B|}\int_{B}|f(y,t)-f_B|^q \, d(y,t) \right)^{\frac{1}{q}},
\end{align}
for any $1<q<\infty$. More details can be found in \cite{Ding_lee_Lin_Hardy_BMO_General_Sets_2014}. A simple but useful property of $BMO^{\varrho}(G)$ function is the following. Let $f$ be in $BMO^{\varrho}(G)$. Given a ball $B$ and a positive integer $k$ we have
\begin{align}
\label{Difference of average in terms of BMO}
    |b_{B}- b_{2^k B}| &\leq 2^Q\, k\, \|f\|_{BMO^{\varrho}(G)}.
\end{align}

Now we will discuss about the kernel representation of $m(\mathcal{L})$, for some appropriate function $m$. Recall that corresponding to the basis $\{X_1, \ldots, X_{d_1}\}$ and $\{T_1, \ldots, T_{d_2}\}$ of the first layer $\mathfrak{g}_1$ and $\mathfrak{g}_2$ respectively, the sub-Laplacian $\mathcal{L}$ on $G$ is defined by $ \mathcal{L}= -(X_1^2 + \cdots + X_{d_1}^2)$. Then the operators $\mathcal{L}, -iT_1, -iT_2, \cdots, -i T_{d_2}$ are essentially self-adjoint on $L^2(G)$ and commutes strongly, therefore they admit a joint functional calculus. In particular, the operator $\mathcal{L}$ and $T:= -( T_1^2+ \cdots + T_{d_2}^2)^{1/2}$ admit a joint functional calculus. Therefore, for any bounded Borel function  $m: \mathbb{R} \times \mathbb{R} \to \mathbb{C}$, the operator $m(\mathcal{L}, T)$ is well defined and bounded on $L^2(G)$.  In addition, by imposing an appropriate condition on $m$, it can be shown that $m(\mathcal{L}, T)$ possesses a convolution kernel $\mathcal{K}_{m(\mathcal{L},T)}$ and moreover, an explicit formula for $\mathcal{K}_{m(\mathcal{L},T)}$ can be obtained in terms of rescaled Laguerre functions. Since $G$ is a M\'etivier group, the endomorphism $J_{\mu}$ is invertible for all $\mu \in \mathfrak{g}_2^*$. The next proposition gives us the spectral decomposition of $-J_{\mu}^2$ for $\mu$ lies in some Zariski open subset of $\mathfrak{g}_2^*$.
\begin{proposition}\cite[Proposition 3.1]{Niedorf_Metivier_group_2023}
\label{Prop: structural properties}
    There exists a non-empty, homogeneous Zariski-open subset $\mathfrak{g}_{2,r}^{*}$ of $\mathfrak{g}_2^{*}$, numbers $N \in \mathbb{N}\setminus \{0\}$, $\mathbf{r}=(r_1, \ldots, r_N) \in (\mathbb{N} \setminus \{0\})^N$, a function $\mu \to \mathbf{b}^{\mu}=(b_1^{\mu}, \ldots, b_N^{\mu}) \in [0, \infty)^{N}$ on $\mathfrak{g}_2^{*}$, functions $\mu \mapsto P_{n,{\mu}} $ on $\mathfrak{g}_{2,r}^{*}$ with $P_{n,{\mu}} : \mathfrak{g}_1 \to \mathfrak{g}_1$, $n \in \{1, \ldots, N\}$ and a function $\mu \mapsto R_{\mu} \in O(d_1)$ on $\mathfrak{g}_{2,r}^{*}$ such that
    \begin{align*}
        -J_{\mu}^2 &= \sum_{n=1}^N (b_n^{\mu})^2 P_{n,{\mu}} \quad \text{for all} \ \mu \in \mathfrak{g}_{2,r}^{*},
    \end{align*}
    with $P_{n,{\mu}} R_{\mu} = R_{\mu} P_n$, $J_{\mu}(\text{ran}\  P_{n,{\mu}}) \subseteq \text{ran}\  P_{n,{\mu}}$ for the range of $P_{n,{\mu}}$ for all $\mu \in \mathfrak{g}_{2,r}^{*}$, and all $n \in \{1, \ldots, N\}$, where $P_n$ denotes the projection from $\mathbb{R}^{d_1}= \mathbb{R}^{2 r_1}\oplus \cdots \oplus \mathbb{R}^{2 r_N}$ onto the $n$-th layer, where
    \begin{enumerate}
        \item the function $\mu \to b_n^{\mu}$ are homogeneous of degree $1$ and continuous on $\mathfrak{g}_2^{*}$, real analytic on $\mathfrak{g}_{2,r}^{*}$, and satisfy $b_n^{\mu}>0$ for all $\mu \in \mathfrak{g}_{2,r}^{*}$ and $n \in \{1, \ldots, N\}$, and $b_n^{\mu} \neq b_{n'}^{\mu}$ if $n \neq n'$ for all $\mu \in \mathfrak{g}_{2,r}^{*}$ and $n, n' \in \{1, \ldots, N\}$,
        \item the functions $\mu \to P_{n,{\mu}}$ are (component wise) real analytic on $\mathfrak{g}_{2,r}^{*}$, homogeneous of degree $0$, and the maps $P_{n,{\mu}}$ are orthogonal projections on $\mathfrak{g}_1$ of rank $2 r_n$ for all $\mu \in \mathfrak{g}_{2,r}^{*}$, with pairwise orthogonal ranges,
        \item $\mu \to R_{\mu}$ is a Borel measurable function on $\mathfrak{g}_{2,r}^{*}$ which is homogeneous of degree $0$ and there is a family $(U_{\ell})_{\ell \in \mathbb{N}}$ of disjoint Euclidean open subsets $U_{\ell} \subseteq \mathfrak{g}_{2,r}^{*}$ whose union is $\mathfrak{g}_{2,r}^{*}$, up to a set of measure zero such that $\mu \to R_{\mu}$ is (component wise) real analytic on each $U_{\ell}$.
   \end{enumerate}
\end{proposition}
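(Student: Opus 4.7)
The plan is to exploit the fact that $J_\mu:\mathfrak{g}_1\to\mathfrak{g}_1$ depends linearly on $\mu\in\mathfrak{g}_2^*$ and is skew-symmetric with respect to $\langle\cdot,\cdot\rangle$, so that $-J_\mu^2$ is symmetric and positive semi-definite; by the M\'etivier hypothesis $J_\mu$ is invertible for $\mu\neq 0$, hence $-J_\mu^2$ is strictly positive definite. Since $J_\mu$ is real skew-symmetric, its non-zero complex eigenvalues come in purely imaginary conjugate pairs $\pm i b_n^\mu$ with $b_n^\mu>0$, so the eigenvalues of $-J_\mu^2$ are $(b_n^\mu)^2$, each with even multiplicity $2r_n$. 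The required degree-one homogeneity of $b_n^\mu$ and degree-zero homogeneity of the spectral data will be immediate from $J_{t\mu}=tJ_\mu$ for $t>0$, which gives $-J_{t\mu}^2=t^2(-J_\mu^2)$.

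First I would define $\mathfrak{g}_{2,r}^*$ as the complement of the zero locus of the discriminant of the characteristic polynomial of $-J_\mu^2$ (regarded as a polynomial in $\mu$, which works since the entries of $-J_\mu^2$ are polynomial in the coordinates of $\mu$). On $\mathfrak{g}_{2,r}^*$ the eigenvalue multiplicities are locally, hence on each connected component globally, constant, giving well-defined integers $N$ and $\mathbf{r}=(r_1,\ldots,r_N)$; homogeneity of $\mathfrak{g}_{2,r}^*$ is automatic because the discriminant of a homogeneous family of matrices is a homogeneous polynomial. The exceptional set is a real-algebraic subvariety of positive codimension.

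On $\mathfrak{g}_{2,r}^*$ I would obtain the projections $P_{n,\mu}$ via Riesz--Kato analytic perturbation theory as contour integrals
\begin{align*}
P_{n,\mu} &= \frac{1}{2\pi i}\oint_{\gamma_n(\mu)}\bigl(\zeta\,\mathrm{id}-(-J_\mu^2)\bigr)^{-1}\,d\zeta,
\end{align*}
with $\gamma_n(\mu)$ a small loop enclosing only the eigenvalue $(b_n^\mu)^2$. Self-adjointness of $-J_\mu^2$ makes the $P_{n,\mu}$ orthogonal with pairwise orthogonal ranges, the commutation $[J_\mu,-J_\mu^2]=0$ gives $J_\mu(\text{ran}\,P_{n,\mu})\subseteq \text{ran}\,P_{n,\mu}$, and real analyticity of the resolvent together with a locally uniform choice of $\gamma_n$ gives real analyticity of $\mu\mapsto P_{n,\mu}$. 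Real analyticity of $\mu\mapsto b_n^\mu$ then follows from the identity $b_n^\mu=\sqrt{\mathrm{tr}(-J_\mu^2\,P_{n,\mu})/(2r_n)}$; distinctness $b_n^\mu\neq b_{n'}^\mu$ for $n\neq n'$ is built into the definition of $\mathfrak{g}_{2,r}^*$.

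The main obstacle is constructing $R_\mu$. On each eigenspace $\text{ran}\,P_{n,\mu}$, the operator $J_\mu/b_n^\mu$ is an orthogonal complex structure, so an orthonormal basis of $\text{ran}\,P_{n,\mu}$ can be arranged to put $J_\mu$ in the standard skew-symmetric block form on $\mathbb{R}^{2r_n}$; however, a globally analytic orthonormal frame over $\mathfrak{g}_{2,r}^*$ is generally obstructed by monodromy and orientation ambiguities in each $2r_n$-plane. My plan is to cover $\mathfrak{g}_{2,r}^*$ up to a set of measure zero by a countable pairwise disjoint family of Euclidean-open sets $U_\ell$ -- obtained from a Borel exhaustion by small balls on which the implicit function theorem yields analytic orthonormal frames diagonalising the $P_{n,\mu}$'s and normalising $J_\mu$ -- and to glue the locally defined analytic rotations into a single Borel measurable $R_\mu$. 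Degree-zero homogeneity is imposed by first performing the construction on a section of the dilation action, e.g.\ $\{\mu:|\mu|=1\}\cap\mathfrak{g}_{2,r}^*$, and extending by $R_{t\mu}=R_\mu$ for $t>0$.
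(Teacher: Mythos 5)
This proposition is not proved in the paper; it is quoted verbatim from Niedorf's work, so there is no in-paper proof to compare against. Evaluating your proposal on its own merits: the overall architecture (Riesz--Kato contour-integral projections, local analytic diagonalization, Borel gluing of $R_\mu$ over a disjoint cover $\{U_\ell\}$, homogeneity by restricting to $|\mu|=1$) is the standard route and is plausible, but there is a concrete error at the very first step that makes $\mathfrak{g}_{2,r}^*$ empty as you define it.

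You propose to take $\mathfrak{g}_{2,r}^*$ to be the complement of the zero set of the discriminant of the characteristic polynomial of $-J_\mu^2$. Since $J_\mu$ is real skew-symmetric, its non-zero eigenvalues occur in conjugate pairs $\pm i b_k^\mu$, so every eigenvalue of $-J_\mu^2$ has even multiplicity. Consequently $\det(\zeta\,\mathrm{id}+J_\mu^2)$ always has repeated roots and its discriminant vanishes identically in $\mu$; the complement of its zero set is empty. The fix is to use the characteristic polynomial of $J_\mu$ itself: skew-symmetry forces $\det(\lambda\,\mathrm{id}-J_\mu)$ to be a polynomial in $\lambda^2$, i.e.\ there is a polynomial $q(\zeta,\mu)$ of degree $d_1/2$ in $\zeta$, with coefficients polynomial in $\mu$, such that $q(-\lambda^2,\mu)=\pm\det(\lambda\,\mathrm{id}-J_\mu)$; equivalently $q(\zeta,\mu)^2=\det(\zeta\,\mathrm{id}+J_\mu^2)$. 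The discriminant of $q$ (in $\zeta$) is a nontrivial polynomial in $\mu$ whose non-vanishing locus is the correct Zariski-open set $\mathfrak{g}_{2,r}^*$: it picks out precisely the $\mu$ for which the squared frequencies $(b_k^\mu)^2$ are pairwise distinct, and the M\'etivier condition guarantees $q$ has no zero root for $\mu\neq 0$, so this locus is non-empty. With this corrected, your contour-integral construction of $P_{n,\mu}$, the trace formula for $b_n^\mu$, and the Borel patching of $R_\mu$ go through essentially as you describe.

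One further omission worth flagging: the statement asserts that $\mu\mapsto b_n^\mu$ is defined and continuous on all of $\mathfrak{g}_2^*$, not just on $\mathfrak{g}_{2,r}^*$. Your formula $b_n^\mu=\sqrt{\mathrm{tr}(-J_\mu^2 P_{n,\mu})/(2r_n)}$ uses $P_{n,\mu}$, which only exists on $\mathfrak{g}_{2,r}^*$. To get a continuous extension you need an additional argument, e.g.\ identifying $b_1^\mu\geq\cdots\geq b_N^\mu$ with suitably repeated and ordered singular values of $J_\mu$ (continuous in $\mu$ by Weyl's inequality) and checking consistency with the analytic labeling on $\mathfrak{g}_{2,r}^*$, or extending by continuity after noting uniform boundedness on $|\mu|=1$. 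You should also say a word about why $N$ and $\mathbf{r}$ are globally constant rather than merely locally constant, since the real complement of a hypersurface need not be connected; the usual remedy is to pass to the complexification, where the complement of a proper algebraic hypersurface is connected, and observe the multiplicity partition is constant there.
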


For $k, r \in \mathbb{N}$ and $\lambda>0$, let $\varphi_{k}^{(\lambda, r)}$ denote the $\lambda$-rescaled Laguerre function defined by
\begin{align*}
    \varphi_{k}^{(\lambda, r)}(z) &= \lambda^r L^{r-1}_k(\tfrac{1}{2}\lambda |z|^2) e^{-\frac{1}{2}\lambda |z|^2}, \quad z \in \mathbb{R}^{2r},
\end{align*}
where $L^{r-1}_k$ is the $k$-th Laguerre polynomial of type $r-1$ (see \cite{Thangavelu_Lectures_Hermite_1993}).

With the same notation as in the above proposition, the following result contains an explicit formula for the kernel of $H(\mathcal{L}, T)$.
\begin{proposition}\cite[Proposition 3.10]{Niedorf_Metivier_group_2023}
\label{Prop: Kernel expression}
    If $m: \mathbb{R} \times \mathbb{R} \to \mathbb{C}$ is a Schwartz function, then $m(\mathcal{L}, T)$ possesses a convolution kernel $\mathcal{K}_{m(\mathcal{L}, T)} \in \mathcal{S}(G)$, that is,
    \begin{align*}
        m(\mathcal{L}, T) f &= f * \mathcal{K}_{m(\mathcal{L}, T)} \quad \text{for all} \ f \in \mathcal{S}(G) .
    \end{align*}
    Moreover, for $x \in \mathfrak{g}_1$ and $u \in \mathfrak{g}_2$,  we have
    \begin{align*}
        \mathcal{K}_{m(\mathcal{L}, T)}(x,u) &= \frac{1}{(2 \pi)^{d_2}} \int_{\mathfrak{g}_{2,r}^{*}} \sum_{\mathbf{k} \in \mathbb{N}^N} m(\lambda_{\mathbf{k}}^{\mu}, |\mu|) \left[\prod_{n=1}^N \varphi_{k_n}^{(b_n^{\mu}, r_n)}(R_{\mu}^{-1} P_{n,{\mu}}x) \right] e^{i \langle \mu, u \rangle} \ d \mu ,
    \end{align*}
    with $\lambda_{\mathbf{k}}^{\mu} = \sum_{n=1}^{N} (2 k_n + r_n)b_n^{\mu}$.
\end{proposition}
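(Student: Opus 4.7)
The plan is to combine the partial Fourier transform in the central variable with the joint spectral decomposition of the twisted sub-Laplacian furnished by Proposition \ref{Prop: structural properties}. For $f\in\mathcal{S}(G)$ I would begin with the partial Fourier transform in $u$,
\begin{align*}
    \hat f^{\,u}(x,\mu) \;=\; \int_{\mathfrak{g}_2} f(x,u)\, e^{-i\langle\mu,u\rangle}\, du .
\end{align*}
Since $T_1,\dots,T_{d_2}$ are central and differentiate only in $u$, they are simultaneously diagonalised by this transform, and $T$ acts as multiplication by $|\mu|$ on $\hat f^{\,u}$. A routine Baker--Campbell--Hausdorff computation shows that on each $\mu$-fibre $\mathcal{L}$ is unitarily equivalent, via the partial Fourier transform, to a $\mu$-dependent Schr\"odinger-type operator $\mathcal{L}_{\mu}$ on $\mathfrak{g}_1\simeq\mathbb{R}^{d_1}$ whose quadratic potential is driven by $-J_\mu^2$; consequently $m(\mathcal{L},T)$ acts fibrewise as $m(\mathcal{L}_{\mu},|\mu|)$.

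Next, for $\mu\in\mathfrak{g}_{2,r}^{*}$, I would diagonalise $\mathcal{L}_{\mu}$ by invoking Proposition \ref{Prop: structural properties}: conjugating by $R_\mu\in O(d_1)$ and using $-J_\mu^2=\sum_{n=1}^N(b_n^{\mu})^2 P_{n,\mu}$, the operator $\mathcal{L}_{\mu}$ splits into $N$ commuting scaled harmonic oscillators of frequency $b_n^{\mu}$ on $\mathrm{ran}\,P_n\simeq\mathbb{R}^{2r_n}$. The eigenvalues of the $n$-th oscillator are $(2k_n+r_n)b_n^{\mu}$ and, via the Weyl/twisted-convolution correspondence, the spectral projectors are represented in the twisted picture by the rescaled Laguerre functions $\varphi_{k_n}^{(b_n^{\mu},r_n)}$; see \cite{Thangavelu_Lectures_Hermite_1993}. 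Taking tensor products across the $N$ blocks and transporting back through $R_\mu^{-1}$ gives joint eigenvalues $\lambda_{\mathbf{k}}^{\mu}=\sum_{n}(2k_n+r_n)b_n^{\mu}$ and joint projectors expressible by the product $\prod_{n}\varphi_{k_n}^{(b_n^{\mu},r_n)}(R_\mu^{-1}P_{n,\mu}\,\cdot)$.

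Combining the two steps, the functional calculus produces, on each $\mu$-fibre,
\begin{align*}
    \big(m(\mathcal{L},T)f\big)^{\wedge u}(x,\mu) \;=\; \sum_{\mathbf{k}\in\mathbb{N}^N} m(\lambda_{\mathbf{k}}^{\mu},|\mu|)\,\Pi_{\mathbf{k},\mu}\big(\hat f^{\,u}(\cdot,\mu)\big)(x),
\end{align*}
where $\Pi_{\mathbf{k},\mu}$ is the joint eigenprojector. Substituting the explicit product formula for $\Pi_{\mathbf{k},\mu}$, recognising its action on $\mathfrak{g}_1$ as $\mu$-twisted convolution with the product of Laguerre kernels, and inverting the Fourier transform in $\mu$ writes $m(\mathcal{L},T)f$ as a group convolution with the kernel displayed in the statement. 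The Schwartz property of $\mathcal{K}_{m(\mathcal{L},T)}$ then reduces, by rapid decay of $m$, to summability in $\mathbf{k}$ (from super-polynomial decay of $m$ on the Laguerre spectrum) and smoothness/decay in $(x,u)$ (from integration by parts in $u$ against $\mu$ and in $\mu$ using the analyticity of $\mu\mapsto b_n^{\mu},P_{n,\mu}$ on each $U_\ell$).

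The main technical obstacle is the low regularity of the diagonaliser $\mu\mapsto R_\mu$, which is only Borel measurable on $\mathfrak{g}_{2,r}^{*}$ and merely piecewise real-analytic on the open family $(U_\ell)$. However, $R_\mu$ enters only through the arguments of the Laguerre functions, and its orthogonality, together with the $O(2r_n)$-invariance of $\varphi_{k_n}^{(b_n^{\mu},r_n)}$ under changes of orthonormal basis within $\mathrm{ran}\,P_{n,\mu}$, makes the integrand independent of the particular measurable choice of diagonaliser. Restricting to a single chart $U_\ell$, establishing the required decay and smoothness there, and summing over the countable family $(U_\ell)$—which exhausts $\mathfrak{g}_{2,r}^{*}$ up to a null set, hence contributes the full integral—completes the argument.
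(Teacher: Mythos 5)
The paper itself offers no proof of Proposition \ref{Prop: Kernel expression}---it is quoted directly from \cite[Proposition 3.10]{Niedorf_Metivier_group_2023}---so there is no internal argument to compare against. That said, your outline matches the standard derivation used in the cited source: partial Fourier transform in $u$ diagonalizes $-iT_1,\dots,-iT_{d_2}$ and converts $\mathcal{L}$ to a $\mu$-dependent twisted Laplacian on $\mathfrak{g}_1$; Proposition \ref{Prop: structural properties} block-diagonalizes this into $N$ rescaled harmonic oscillators with frequencies $b_n^\mu$ on $\mathbb{R}^{2r_n}$; the spectral projectors are realised as twisted convolution with the Laguerre functions $\varphi_{k_n}^{(b_n^\mu,r_n)}$; and Fourier inversion in $\mu$ (supplying the prefactor $(2\pi)^{-d_2}$) yields the displayed kernel. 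Your observation that radiality of $\varphi_k^{(\lambda,r)}$ makes the merely Borel-measurable diagonaliser $R_\mu$ harmless is exactly the right way to dispose of that point.

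The one place your sketch is noticeably thinner than a full proof would need to be is the Schwartz property of $\mathcal{K}_{m(\mathcal{L},T)}$. Summability in $\mathbf{k}$ from the rapid decay of $m$ and decay in $x$ from the Gaussian factor in the Laguerre functions are routine, but getting rapid decay in $u$ by integrating by parts in $\mu$ is not a one-line step: the integrand carries the factor $|\mu|$, which is not smooth at the origin, and the data $b_n^\mu$, $P_{n,\mu}$ are real analytic only on the Zariski-open set $\mathfrak{g}_{2,r}^*$, so integration by parts produces boundary contributions across the charts $U_\ell$ and near the singular set whose vanishing must be justified. This can be done (using continuity of $b_n^\mu$ on all of $\mathfrak{g}_2^*$ and the null-measure of the complement of $\mathfrak{g}_{2,r}^*$), or one can obtain the qualitative Schwartz statement from a Hulanicki-type theorem for the joint calculus of $\mathcal{L}$ and $T$ and derive the explicit formula separately on $\mathfrak{g}_{2,r}^*$; either way, you should flag that this step requires an argument rather than presenting it as immediate.
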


We sometimes need the integration of the homogeneous norm on outside of some ball. The proof of the following lemma is easy and follows from just breaking the integral in disjoint union of annular regions, therefore we omit it here.
\begin{lemma}
\label{lemma: outside distance}
    Let $R> 0$. Then for any $s> Q$ we have
    \begin{align*}
        \int_{\|(x, u)\| > r} \frac{d(x,u)}{\big( 1 + R\|(x, u)\| \big)^s} &\leq C R^{-s} r^{-s + Q}.
    \end{align*}
\end{lemma}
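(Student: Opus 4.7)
The plan is to execute the dyadic annular decomposition that the authors allude to. First I would decompose the exterior region according to the homogeneous norm: set $A_j = \{(x,u) \in G : 2^j r \leq \|(x,u)\| < 2^{j+1} r\}$ for $j \in \mathbb{N}$, so that $\{\|(x,u)\| > r\} = \bigcup_{j\geq 0} A_j$ up to a null set, and these annuli are pairwise disjoint.

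Next I would bound the integrand and the measure on each $A_j$ separately. On $A_j$ one has $\|(x,u)\| \geq 2^j r$, and since $\tau \mapsto (1+\tau)^s$ is increasing with $(1+\tau)^s \geq \tau^s$ for $\tau \geq 0$, this gives
\begin{align*}
    (1 + R\|(x,u)\|)^s \;\geq\; (R\|(x,u)\|)^s \;\geq\; R^s\, 2^{js}\, r^s.
\end{align*}
For the volume, I would invoke the standing fact that the Carnot--Carath\'eodory balls satisfy $|B(0,\rho)| \sim \rho^Q$ (recorded in the preliminaries), which yields $|A_j| \leq |B(0, 2^{j+1}r)| \lesssim 2^{jQ} r^Q$. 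Combining these two ingredients,
\begin{align*}
    \int_{A_j} \frac{d(x,u)}{(1+R\|(x,u)\|)^s} \;\lesssim\; \frac{2^{jQ} r^Q}{R^s\, 2^{js}\, r^s} \;=\; R^{-s}\, r^{Q-s}\, 2^{j(Q-s)}.
\end{align*}

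Finally I would sum over $j \geq 0$: because the hypothesis $s > Q$ makes $Q - s < 0$, the geometric series $\sum_{j\geq 0} 2^{j(Q-s)}$ converges to a constant depending only on $s$ and $Q$, giving the asserted bound $C R^{-s} r^{Q-s}$. There is no real obstacle here; the only minor subtlety is ensuring the crude pointwise inequality $(1+R\|(x,u)\|)^s \geq (R\|(x,u)\|)^s$ (valid without any smallness assumption on $Rr$) is used rather than a sharper replacement of $1 + R\|(x,u)\|$ by $R\|(x,u)\|$, and making sure the volume bound refers to the metric ball of homogeneous dimension $Q$ rather than the topological dimension $d$.
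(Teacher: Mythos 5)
Your proof is correct and is exactly the annular (dyadic shell) decomposition that the paper itself indicates and then omits as routine: bound $(1+R\|(x,u)\|)^s \geq (R\, 2^j r)^s$ on each shell, use $|\{\|(x,u)\|<\rho\}| \sim \rho^Q$ (which follows from the homogeneity of the norm under the dilations $\delta_\rho$, with Jacobian $\rho^Q$), and sum the resulting geometric series using $s>Q$. No gaps.
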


We conclude this section by stating the following useful lemma, that we need in the proof of our results which can be proved using similar argument from the proof of \cite[Lemma 3.2]{Chen_Tian_Ward_Commutator_Bochner_Riesz_Elliptic_2021}.

\begin{lemma}
\label{Deduction: commutator to operator} 
Let $U$ be a linear operator with kernel $\mathcal{K}_{U}$ such that $\supp \mathcal{K}_U \subseteq \mathcal{D}_{r} := \{ (x, y) \in G \times G : \varrho(x, y)\leq r \}$ for some $r>0$. Assume $b \in BMO^{\varrho}(G)$. Then there exists a sequence $\{(x_k, u_k)\}_{k \in \mathbb{N}}$ in $G$, disjoint sets $\widetilde{B}_k \subseteq B_k := B\bigl((x_k,u_k), r \bigr)$ such that $B((x_k,u_k), \tfrac{r}{20}) \cap B((x_j,u_j), \frac{r}{20}) = \emptyset$ for $j \neq k$ and for $1\leq q <\infty$ we have
\begin{align*}
   \|[b, U]f\|_{L^q(G)}^q & \leq C \sum_k \Bigl(\| \chi_{4B_k}\, (b - b_{B_0})\, U (\chi_{\widetilde{B}_k} f) \|_{L^q(G)}^q + \| \chi_{4 B_k}\, U(\chi_{\widetilde{B}_k}\, (b - b_{B_0}) f )\|_{L^q(G)}^q  \Bigr) ,
\end{align*}
where $B_0 = B(0, r)$.
\end{lemma}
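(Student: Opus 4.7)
The plan is to reduce the bound on $\|[b, U]f\|_{L^q(G)}^q$ to a sum of localized contributions by combining a Vitali-type covering argument with the elementary commutator identity $[b,U](g) = (b-c)\,U g - U((b-c)g)$, valid for every scalar $c \in \mathbb{C}$. The support hypothesis $\supp \mathcal{K}_U \subseteq \mathcal{D}_r$ will localize each resulting piece to the enlarged ball $4B_k$, while the disjointness of the small balls $B((x_k,u_k),r/20)$, via the doubling property of $(G,\varrho,|\cdot|)$, will provide the bounded overlap needed to turn a pointwise sum into a sum of $L^q$ norms.

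First I would choose $\{(x_k,u_k)\}_{k\in\mathbb{N}}$ by a maximal selection so that the balls $B((x_k,u_k),r/20)$ are pairwise disjoint; by maximality the enlarged balls $B((x_k,u_k),r/10)$ then cover $G$. Setting $\widetilde{B}_k := B((x_k,u_k),r/10) \setminus \bigcup_{j<k} B((x_j,u_j),r/10)$ produces disjoint sets with $\widetilde{B}_k \subseteq B_k$ and $\bigsqcup_k \widetilde{B}_k = G$. Doubling shows that the family $\{4B_k\}_k$ has bounded overlap: for every $x \in G$ the number of indices $k$ with $x \in 4B_k$ is at most a constant $M=M(Q)$, because the disjoint balls $B((x_k,u_k),r/20)$ with $x \in 4B_k$ all sit inside $B(x,5r)$. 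Decomposing $f = \sum_k \chi_{\widetilde{B}_k}f$ and applying the commutator identity with the single constant $c=b_{B_0}$ gives
\[
 [b,U](\chi_{\widetilde{B}_k}f) = (b-b_{B_0})\,U(\chi_{\widetilde{B}_k}f) - U\bigl(\chi_{\widetilde{B}_k}(b-b_{B_0})f\bigr).
\]
Because $\mathcal{K}_U(x,y)=0$ whenever $\varrho(x,y)>r$ and $\widetilde{B}_k \subseteq B_k$, both terms on the right vanish outside $B((x_k,u_k),2r) \subseteq 4B_k$, so the characteristic function $\chi_{4B_k}$ can be inserted in front of each without loss.

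Summing over $k$, applying the bounded-overlap estimate $\bigl(\sum_k |a_k|\chi_{4B_k}\bigr)^q \le M^{q-1}\sum_k |a_k|^q\chi_{4B_k}$, followed by $|u-v|^q \le 2^{q-1}(|u|^q+|v|^q)$ and integrating, yields the claimed inequality. The only conceptually delicate point is that a single fixed constant $b_{B_0}$ replaces the more customary local averages $b_{B_k}$; this is permissible because the commutator identity holds for \emph{any} constant, and the cost of this substitution, namely the logarithmic factor in $\varrho((x_k,u_k),0)/r$ produced by (\ref{Difference of average in terms of BMO}), will be absorbed later when the lemma is applied together with the $BMO^{\varrho}$-characterization (\ref{Cherecterisation of BMO}). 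No genuine obstacle arises at the level of this lemma: once the Vitali cover and the commutator identity are in hand, the proof is essentially bookkeeping dictated by the support of $\mathcal{K}_U$ and the finite multiplicity $M$.
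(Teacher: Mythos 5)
Your proof is correct and is exactly the standard route — maximal Vitali packing at scale $r/20$, the commutator identity $[b,U]g=(b-c)Ug-U((b-c)g)$ (valid for any constant $c$), support localization from $\supp\mathcal{K}_U\subseteq\mathcal{D}_r$, and bounded overlap of $\{4B_k\}$ from doubling — which is what the paper intends by pointing to Lemma~3.2 of Chen--Tian--Ward. The chain $|\sum_k a_k|^q\le(2M)^{q-1}\sum_k|a_k|^q$ followed by integration reproduces the stated bound, and your choice $\widetilde B_k\subseteq B((x_k,u_k),r/10)$ trivially satisfies the required containment in $B_k$.

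One remark on the $b_{B_0}$ issue you raise. You are right that the identity allows any constant, so the inequality with a fixed $b_{B_0}$ is literally true and your proof establishes it. However, in the way this lemma is later applied in the paper (the reduction right after \eqref{equation: inequality for F^i}), the $k$-th summand is moved to the origin by left translation of the group, and under that translation the relevant subtraction should be the local average $b_{B_k}$, which then \emph{becomes} the $B_0$-average of the translated symbol; this is how $B_0$ appears in \eqref{equation: red1}--\eqref{equation: red2} without any unbounded logarithmic error. So rather than expecting the factor from \eqref{Difference of average in terms of BMO} to be "absorbed later," the cleaner reading is that the lemma is meant with $b_{B_k}$ in the $k$-th term, and your proof handles that version verbatim by taking $c=b_{B_k}$. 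This is a presentational point, not a gap in your argument.
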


\section{Restriction and Kernel estimates}
\label{section: restriction and kernel estimate}
In this section we discuss about the restriction type estimates (Proposition \ref{Theorem: Truncated Restriction Estimate strong form}, Corollary \ref{Inequality: Truncated restriction equation weak form I}), weighted Plancherel estimates with respect to the first layer (Proposition \ref{Theorem : First layer weighted Plancherel}) and pointwise weighted kernel estimates (Proposition \ref{Proposition: Weighted Plancherel with L infinity condition}) for the sub-Laplacians on M\'etivier groups. One of the main ingredients in proving the boundedness of Bochner-Riesz means in the Stein-Tomas range is the use of restriction type estimates, which goes back to the idea of Stein \cite{Fefferman_Strongly_Singular_integral_1970}. Recently in \cite{Niedorf_Restriction_Stratified_group_2023}, the author proved truncation restriction type estimate on any two step stratified Lie groups, but with the presence of discrete norm instead of $L^2$-norm, which we discuss below.

For any bounded Borel function $\sigma : \mathbb{R} \to \mathbb{C}$ such that $\supp \sigma \subseteq [0,8]$, we define the following discrete norm
\begin{align*}
    \|\sigma\|_{N, 2}= \Biggl ( \frac{1}{N} \sum_{K\in \mathbb{Z}} \sup_{\lambda \in [\frac{K-1}{N}, \frac{K}{N}]} |\sigma(\lambda)|^2   \Biggr)^{\frac{1}{2}}, \quad N>0.
\end{align*}
Regarding the above discrete norm, we have the following estimates:
\begin{align}
\label{discrete norm is dominated by sup norm}
    \|\sigma\|_{N,2} &\leq C\, \|\sigma\|_{L^{\infty}} ,
\end{align}
and for $s>1/2$,
\begin{align}
\label{equation: cowling-sikora norm relation}
 \|\sigma\|_{L^2}\leq  \|\sigma\|_{N,2} \leq C \left(\|\sigma\|_{L^2} + N^{-s} \|\sigma\|_{L^2_s} \right) ,
\end{align}
where the first estimate follows from the definition and for the second estimate, see \cite[eq. (1.7)]{Niedorf_Restriction_Stratified_group_2023}.

Let $\Theta : \mathbb{R} \to [0,1]$ be an even $C_c^{\infty}$ function supported in $[-2R^2,-R^2/2] \cup [R^2/2, 2R^2]$ for $R>0$, such that 
\begin{align}
\label{Definition: Cutoff function theta}
    \sum_{M \in \mathbb{Z}} \Theta_{M}(\tau) =1 ,
\end{align}
where $\Theta_{M}(\tau) = \Theta(2^{M} \tau)$. Also, let $F : \mathbb{R} \to \mathbb{C}$ is a bounded Borel function supported in $ [R/8, 8R]$. Then for $M \in \mathbb{Z}$, we define $F_M : \mathbb{R} \times \mathbb{R} \to \mathbb{C}$ by 
\begin{align}
\label{Introducing theta in multiplier}
    F_M(\kappa, \tau) := F(\sqrt{\kappa}) \Theta( 2^{M} \tau) \quad \text{for} \quad \kappa \geq 0,
\end{align}
and $F_M(\kappa, \tau) = 0$ else.

We have the following \emph{truncated} restriction type estimate, which can be proved using Theorem 1.1 of \cite{Niedorf_Metivier_group_2023}.
\begin{proposition}
\label{Theorem: Truncated Restriction Estimate strong form}
Suppose $1 \leq p \leq \min{\{p_{d_1}, p_{d_2}\}}$. Let $F$ and $F_M$ be as defined in \eqref{Introducing theta in multiplier}. Then
\begin{align*}
    \| F_M(\mathcal{L}, T) f\|_{L^2(G) } &\leq C R^{Q(1/p - 1/2) } 2^{-M d_2(1/p-1/2)} \,\|\delta_{R} F\|_{L^2} ^{1-\theta_p}\,  \|\delta_{R} F\|_{2^{M},2 } ^{\theta_p} \, \|f\|_{L^p(G)},
\end{align*}
where $\theta_p \in [0,1]$  satisfies $1/p = (1- \theta_p) + \theta_p/\min\{p_{d_1}, p_{d_2} \}. $
\end{proposition}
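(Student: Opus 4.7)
The plan is to establish the bound by interpolating two endpoint $L^p(G) \to L^2(G)$ estimates. The parameter $\theta_p$ is defined so that $p = 1$ corresponds to $\theta_p = 0$ (pure $L^2$ norm of $\delta_R F$ on the right-hand side), while $p = p_0 := \min\{p_{d_1}, p_{d_2}\}$ corresponds to $\theta_p = 1$ (pure discrete norm). This is the signature of a complex interpolation between a trivial $L^1 \to L^2$ bound (obtained from a Plancherel computation of the $L^2$ norm of the convolution kernel) and the deep restriction estimate at $p = p_0$ supplied by Theorem 1.1 of \cite{Niedorf_Metivier_group_2023}.

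At the $p = 1$ endpoint I would use Young's inequality to reduce the operator norm to $\|\mathcal{K}_{F_M(\mathcal{L}, T)}\|_{L^2(G)}$. Applying Plancherel in the central variable $u$ to the explicit kernel formula of Proposition \ref{Prop: Kernel expression}, together with the orthogonality and $L^2$-norms of the rescaled Laguerre functions $\varphi_{k_n}^{(b_n^\mu, r_n)}$, yields an identity of the form
\[
\|\mathcal{K}_{F_M(\mathcal{L}, T)}\|_{L^2(G)}^2 = C \int_{\mathfrak{g}_{2,r}^*} |\Theta(2^M |\mu|)|^2 \sum_{\mathbf{k} \in \mathbb{N}^N} |F(\sqrt{\lambda_\mathbf{k}^\mu})|^2 \prod_{n=1}^N (b_n^\mu)^{r_n} \, d\mu.
\]
Using the homogeneity of degree one of the functions $\mu \mapsto b_n^\mu$ provided by Proposition \ref{Prop: structural properties}, the support constraint $|\mu| \sim R^2/2^M$ imposed by $\Theta$, and the Weyl-type counting of multi-indices $\mathbf{k}$ for which $\lambda_\mathbf{k}^\mu$ falls in the support of $F$, the sum in $\mathbf{k}$ can be converted into an effective integral in the variable $\kappa = \lambda_\mathbf{k}^\mu$, leading to the $\theta_p = 0$ endpoint bound $\|\mathcal{K}_{F_M(\mathcal{L}, T)}\|_{L^2}^2 \leq C R^Q 2^{-Md_2} \|\delta_R F\|_{L^2}^2$.

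To interpolate, I would partition $[R/8, 8R]$ into subintervals $J_K := [(K-1)R/2^M, KR/2^M]$ and decompose $F = \sum_K F_K$ with $F_K := F \chi_{J_K}$. Since the spectral projections of $\sqrt{\mathcal{L}}$ corresponding to disjoint intervals are orthogonal, the operators $(F_K)_M(\mathcal{L}, T) = F_K(\sqrt{\mathcal{L}}) \Theta(2^M T)$ have pairwise orthogonal images in $L^2(G)$, so that $\|F_M(\mathcal{L}, T) f\|_{L^2}^2 = \sum_K \|(F_K)_M(\mathcal{L}, T) f\|_{L^2}^2$. For each single-interval piece one applies Riesz-Thorin between the two endpoint operator-norm bounds to obtain a $K$-dependent estimate with the combined factor $\|\delta_R F_K\|_{2^M, 2}^{\theta_p} \|\delta_R F_K\|_{L^2}^{1-\theta_p}$; a final Hölder's inequality in $K$ with conjugate exponents $1/\theta_p$ and $1/(1-\theta_p)$ then reassembles the resulting $\ell^2$-sum into the claimed product $\|\delta_R F\|_{2^M, 2}^{\theta_p} \|\delta_R F\|_{L^2}^{1-\theta_p}$. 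The $R$- and $2^{-M}$-exponents match automatically because $\theta_p$ satisfies $1/p = (1-\theta_p) + \theta_p/p_0$.

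The main technical obstacle is the $p = 1$ endpoint kernel estimate, which requires carefully exploiting the joint spectral decomposition of $(\mathcal{L}, T)$ and controlling the eigenvalue counting for $\lambda_\mathbf{k}^\mu$ in the support of $F$ uniformly in $\mu$. By contrast, the interpolation scheme and the final summation are entirely standard once both endpoints have been established.
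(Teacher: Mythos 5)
Your proposal takes a genuinely different route from the paper. The paper's proof is essentially a two-line scaling argument: it uses the homogeneity relations $\sqrt{\mathcal{L}} = R\,\delta_R\circ\sqrt{\mathcal{L}}\circ\delta_{R^{-1}}$ and $T = R^2\,\delta_R\circ T\circ\delta_{R^{-1}}$ to write $F_M(\mathcal{L},T) = \delta_R\circ F(R\sqrt{\mathcal{L}})\,\Theta(R^2 2^M T)\circ\delta_{R^{-1}}$, observes that the rescaled multiplier $F(R\,\cdot)$ is supported in $[1/8,8]$ and $\Theta(R^2\,\cdot)$ in $[-2,-1/2]\cup[1/2,2]$, and then directly invokes \cite[Theorem~1.1]{Niedorf_Metivier_group_2023}, which already supplies the interpolated factor $\|\delta_R F\|_{L^2}^{1-\theta_p}\|\delta_R F\|_{2^M,2}^{\theta_p}$ for all $p$ in the stated range. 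The dilation contributes $R^{-Q/2}$ from the $L^2$ side and $R^{Q/p}$ from the $L^p$ side, producing $R^{Q(1/p-1/2)}$; no interpolation is done in the paper at all.

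What you propose instead amounts to re-deriving the content of Niedorf's Theorem~1.1. Your $p=1$ endpoint is precisely Proposition~\ref{Theorem : First layer weighted Plancherel} with $\alpha=0$, which the paper itself delegates to \cite[Proposition~8.1]{Niedorf_Metivier_group_2023}; getting the stated kernel identity and the $2^{-Md_2}$ gain from the eigenvalue counting requires the detailed spectral decomposition of Proposition~\ref{Prop: structural properties} and is not quite a bare Plancherel computation. Your $p=p_0$ endpoint with pure discrete norm is the $\theta_p=1$ case of Niedorf's theorem. Your interpolation scheme is structurally sound: decomposing $F=\sum_K F_K$ into pieces of width $R/2^M$, the disjoint-support orthogonality of the spectral projections gives $\|F_M(\mathcal{L},T)f\|_{L^2}^2 = \sum_K\|(F_K)_M(\mathcal{L},T)f\|_{L^2}^2$, Riesz--Thorin on each piece produces a factor $\|\delta_R F_K\|_{L^2}^{1-\theta_p}\|\delta_R F_K\|_{2^M,2}^{\theta_p}$, and the final H\"older step in $K$ recovers the product of global norms because $\sum_K\|\delta_R F_K\|_{L^2}^2 = \|\delta_R F\|_{L^2}^2$ and $\sum_K\|\delta_R F_K\|_{2^M,2}^2 = \|\delta_R F\|_{2^M,2}^2$ (each $F_K$ occupies exactly one cell of the partition defining the discrete norm). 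The exponents on $R$ and $2^{-M}$ match because $\theta_p$ satisfies $1/p - 1/2 = (1-\theta_p)/2 + \theta_p(1/p_0 - 1/2)$. So your argument works, but it re-proves the bulk of the cited theorem rather than reducing to it; both routes ultimately hinge on the same input from \cite{Niedorf_Metivier_group_2023}, and the paper's proof is considerably more economical because it treats Theorem~1.1 as a black box and gets the whole estimate from a single dilation.
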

\begin{proof}
Let $\delta_R$ be as in (\ref{Dilation on metivier}). Then we have
\begin{align*}
    \sqrt{\mathcal{L}}f &= R \ \delta_R \circ \sqrt{\mathcal{L}} \circ \delta_{R^{-1}} f \quad \text{and} \quad Tf = R^2\ \delta_R \circ T \circ \delta_{R^{-1}} f .
\end{align*}
From which we easily get 
\begin{align}
\label{Relation between dilated and non dilated joint functional calcu}
    F_M(\mathcal{L},T)f = \delta_R \circ F(R \sqrt{\mathcal{L}}) \Theta(R^2 2^M T) \circ \delta_{R^{-1}} f,
\end{align}
Note that $F(R \cdot)$ is supported in $[1/8, 8]$ and $\Theta(R^2 \cdot)$ is supported in $[-2,-1/2] \cup [1/2,2]$, so that we can apply \cite[Theorem 1.1]{Niedorf_Metivier_group_2023}, therefore using \eqref{Relation between dilated and non dilated joint functional calcu} we get,
\begin{align*}
    \| F_M(\mathcal{L},T)f \|_{L^2(G)} & = \| \delta_R \circ F(R \sqrt{\mathcal{L}}) \Theta(R^2 2^M T) \circ \delta_{R^{-1}} f  \|_{L^2(G)} \nonumber\\
    & = R^{-Q/2} \| F(R \sqrt{\mathcal{L}}) \Theta(R^2 2^M T) \circ \delta_{R^{-1}} f  \|_{L^2(G)}\nonumber\\
    & \leq  C R^{-Q/2} 2^{-M d_2(1/p - 1/2)}\, \|\delta_{R}F\|_{L^2} ^{1- \theta_p}\,\|\delta_{R}F\|_{2^{M}, 2} ^{\theta_p}\,  \|  \delta_{R^{-1}} f\|_{L^p (G)}\nonumber\\
    & \leq  C R^{Q  (1/p -1/2)} 2^{-M d_2(1/p - 1/2)}\, \|\delta_{R}F\|_{L^2} ^{1- \theta_p}\,\|\delta_{R}F\|_{2^{M}, 2} ^{\theta_p}\,  \|f\|_{L^p (G)},\nonumber
\end{align*}
where $\theta_p \in [0,1]$  satisfies $1/p = (1- \theta_p) + \theta_p/\min\{p_{d_1}, p_{d_2} \}. $
\end{proof}

Using (\ref{equation: cowling-sikora norm relation}), the following result follows immediately from Proposition \ref{Theorem: Truncated Restriction Estimate strong form}.
\begin{corollary}\label{Inequality: Truncated restriction equation weak form I}
    Suppose $1 \leq p \leq \min{\{p_{d_1}, p_{d_2}\}}$. Let  $F$ and $F_M$ be as in \eqref{Introducing theta in multiplier}. Then
\begin{align*}
    \| F_M(\mathcal{L}, T) f\|_{L^2(G) } \leq C R^{Q(1/p - 1/2) } 2^{-M d_2(1/p-1/2)} \|\delta_{R} F\|_{2^{M},2 } \, \|f\|_{L^p(G)} .
\end{align*}
\end{corollary}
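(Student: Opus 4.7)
The plan is to reduce the assertion to Proposition \ref{Theorem: Truncated Restriction Estimate strong form} by collapsing the two $\delta_R F$-norm factors appearing there into a single $\|\delta_R F\|_{2^M,2}$ factor. The only ingredient needed beyond the proposition itself is the left-hand inequality in (\ref{equation: cowling-sikora norm relation}), namely the comparison
\[
\|\sigma\|_{L^2} \;\leq\; \|\sigma\|_{N,2},
\]
valid for any $N>0$ and any Borel $\sigma$ whose support lies in $[0,8]$. Before invoking this, one should simply note that $\delta_R F$ meets the support hypothesis: since $F$ is supported in $[R/8, 8R]$, the dilate $\delta_R F(\eta) = F(R\eta)$ is supported in $[1/8, 8] \subset [0,8]$, so the discrete norm $\|\delta_R F\|_{2^M,2}$ is well-defined and the $L^2$ comparison applies.

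Next I would specialize to $\sigma = \delta_R F$ and $N = 2^M$, obtaining $\|\delta_R F\|_{L^2} \leq \|\delta_R F\|_{2^M,2}$. Inserting this into the mixed product that appears on the right-hand side of Proposition \ref{Theorem: Truncated Restriction Estimate strong form}, one gets
\[
\|\delta_R F\|_{L^2}^{1-\theta_p}\,\|\delta_R F\|_{2^M,2}^{\theta_p} \;\leq\; \|\delta_R F\|_{2^M,2}^{1-\theta_p}\,\|\delta_R F\|_{2^M,2}^{\theta_p} \;=\; \|\delta_R F\|_{2^M,2},
\]
independently of the value of $\theta_p \in [0,1]$ determined by the interpolation condition. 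Substituting this bound directly into the conclusion of Proposition \ref{Theorem: Truncated Restriction Estimate strong form} yields the stated corollary.

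There is no real obstacle here: the content of the corollary is just a convexity-free consequence of the proposition combined with the monotonicity inequality $\|\cdot\|_{L^2} \leq \|\cdot\|_{N,2}$. The tradeoff is that the resulting bound is weaker than Proposition \ref{Theorem: Truncated Restriction Estimate strong form}, since the $L^2$-norm part, which could be much smaller than the discrete norm for rough $F$, has been discarded. In the applications later in the paper, however, this weaker form will be sufficient and more convenient, because it involves only a single norm of $\delta_R F$.
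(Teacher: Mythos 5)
Your proof is correct and takes essentially the same route as the paper: the paper also deduces the corollary directly from Proposition \ref{Theorem: Truncated Restriction Estimate strong form} by invoking the left-hand inequality $\|\sigma\|_{L^2}\leq\|\sigma\|_{N,2}$ from \eqref{equation: cowling-sikora norm relation} to absorb the $\|\delta_R F\|_{L^2}^{1-\theta_p}$ factor into $\|\delta_R F\|_{2^M,2}^{1-\theta_p}$.
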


The following proposition, which we sometimes call weighted Plancherel estimate with respect to the first layer, plays an important role later in our proof.
\begin{proposition}
\label{Theorem : First layer weighted Plancherel}
Let $\mathcal{K}_{F_M(\mathcal{L}, T)}$ denote the convolution kernel of $F_M(\mathcal{L}, T)$. Then for any $\alpha \geq 0$, we have 
\begin{align*}
 \int_G \left| |x|^{\alpha} \mathcal{K}_{F_M(\mathcal{L}, T)} (x, u) \right|^2 \, d(x, u) &\leq C\ 2^{M(2\alpha -d_2)}\,R^{Q- 2\alpha} \|\delta_R F\|_{L^2} ^2.
\end{align*}
\end{proposition}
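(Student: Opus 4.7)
The plan is to reduce to a joint-spectral calculation by first taking Plancherel in the central variable $u$, then exploiting the $L^2$-orthogonality of rescaled Laguerre functions in the first-layer variable $x$, and finally cashing in on the spectral localization imposed by the supports of $F$ and $\Theta$.

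To begin, I apply Plancherel's theorem on $\mathfrak{g}_2 \cong \mathbb{R}^{d_2}$ in the $u$-variable to the explicit formula from Proposition \ref{Prop: Kernel expression}; this yields
\[
\int_G \bigl||x|^\alpha \mathcal{K}_{F_M(\mathcal{L},T)}(x,u)\bigr|^2\, d(x,u) = (2\pi)^{-d_2}\!\int_{\mathfrak{g}_{2,r}^*}\!\int_{\mathfrak{g}_1} |x|^{2\alpha}|K^\mu(x)|^2\, dx\, d\mu,
\]
with $K^\mu(x) = \sum_{\mathbf{k}} F_M(\lambda_{\mathbf{k}}^\mu, |\mu|) \prod_n \varphi_{k_n}^{(b_n^\mu, r_n)}(R_\mu^{-1} P_{n,\mu} x)$. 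The orthogonal substitution $y = R_\mu^{-1} x$ preserves both $|x|$ and Lebesgue measure, and the intertwining relation $P_{n,\mu} R_\mu = R_\mu P_n$ from Proposition \ref{Prop: structural properties} turns the Laguerre argument into the standard projection $P_n y =: y_n$ onto the $n$-th layer of the orthogonal splitting $\mathbb{R}^{d_1} = \bigoplus_n \mathbb{R}^{2r_n}$.

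Next, using $|y|^{2\alpha} = \bigl(\sum_n |y_n|^2\bigr)^\alpha \lesssim \sum_n |y_n|^{2\alpha}$, I estimate each summand separately. Expanding the square and integrating over the variables $y_m$ for $m \neq n$ using the $L^2$-orthogonality of $\{\varphi_{k_m}^{(b_m^\mu,r_m)}\}_{k_m}$ on $\mathbb{R}^{2r_m}$ diagonalises in the indices $k_m$ with $m \neq n$ and produces factors $\sim (b_m^\mu)^{r_m}(k_m+1)^{r_m-1}$. The remaining $y_n$-integral is controlled by the weighted Laguerre moment estimate
\[
\int_{\mathbb{R}^{2r_n}}|y_n|^{2\alpha}|\varphi_{k_n}^{(b_n^\mu, r_n)}(y_n)|^2\, dy_n \lesssim (b_n^\mu)^{r_n - \alpha}(k_n+1)^{r_n - 1 + \alpha},
\]
which follows from the scaling $y_n = (b_n^\mu)^{-1/2} w$ together with standard Laguerre polynomial estimates.

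Finally, I feed in the localizations dictated by the supports of $F$ and $\Theta$: $F(\sqrt{\cdot})$ forces $\lambda_{\mathbf{k}}^\mu \sim R^2$ while $\Theta(2^M \cdot)$ forces $|\mu| \sim R^2 2^{-M}$. By the homogeneity of $\mu \mapsto b_n^\mu$, one then has $b_n^\mu \sim R^2 2^{-M}$ on the support, so that $\lambda_{\mathbf{k}}^\mu = \sum_n (2 k_n + r_n) b_n^\mu \sim R^2$ compels $|\mathbf{k}| \sim 2^M$. This yields the crucial factor $(k_n+1)^\alpha/(b_n^\mu)^\alpha \lesssim 2^{2M\alpha} R^{-2\alpha}$ over the $\alpha = 0$ Plancherel estimate $\lesssim 2^{-M d_2} R^Q \|\delta_R F\|_{L^2}^2$ (which is obtained in the same way but without the extra weighted factor, using that the $\mu$-support has measure $\sim (R^2 2^{-M})^{d_2}$), giving exactly the announced bound. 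The main technical obstacle is the off-diagonal Laguerre contributions produced by the weight $|y_n|^{2\alpha}$ for non-integer $\alpha$, which destroys the clean $L^2$-orthogonality of the $\varphi_{k_n}^{(b_n^\mu, r_n)}$ in the $n$-th variable; for integer $\alpha = m$ the operator $|y_n|^{2m}$ acts as a banded matrix in the Laguerre basis and the off-diagonal terms are trivially absorbed, while for general $\alpha$ one may either bound the resulting quadratic form by its diagonal via Cauchy-Schwarz or interpolate analytically in $\alpha$ between consecutive integer values.
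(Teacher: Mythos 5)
Your proposal takes a genuinely different route from the paper. The paper does not compute the weighted Plancherel norm directly from the Laguerre expansion. Instead it quotes \cite[Proposition 8.1]{Niedorf_Metivier_group_2023}, which already gives the inequality in the rescaled form (with $\delta_RF$ supported in $[1/8,8]$ and $\delta_{R^2}\Theta$ supported in $[-2,-1/2]\cup[1/2,2]$), and then the entire proof consists of showing, via the change of variables $(x,u)\mapsto\delta_R(x,u)$ in the explicit kernel formula and the degree-$1$/degree-$0$ homogeneity of $\mu\mapsto b_n^\mu$, $P_{n,\mu}$, $R_\mu$, that
\[
\int_G\left||x|^\alpha\mathcal{K}_{F(R\sqrt{\mathcal{L}})\Theta(R^2 2^M T)}(x,u)\right|^2 d(x,u) = R^{-Q+2\alpha}\int_G\left||x|^\alpha\mathcal{K}_{F_M(\mathcal{L},T)}(x,u)\right|^2 d(x,u).
\]
That scaling identity is all that is needed. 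Your plan, by contrast, would re-derive the content of Niedorf's Proposition 8.1 from scratch through a direct Plancherel--Laguerre computation. What it buys you is self-containedness; what it costs you is redoing a fairly delicate piece of Laguerre analysis that the paper chooses to black-box.

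Unfortunately, the self-contained route has a genuine gap exactly where you flag the ``main technical obstacle.'' After freezing the indices $k_m$, $m\neq n$, by orthogonality, you are left with the quadratic form
\[
Q(c) \;=\; \int_{\mathbb{R}^{2r_n}} |y_n|^{2\alpha}\left|\sum_{k_n} c_{k_n}\,\varphi_{k_n}^{(b_n^\mu,r_n)}(y_n)\right|^2\,dy_n,
\]
and you need $Q(c)\lesssim\sum_{k_n}|c_{k_n}|^2\int|y_n|^{2\alpha}|\varphi_{k_n}^{(b_n^\mu,r_n)}|^2$. Your first proposed resolution, ``bound the resulting quadratic form by its diagonal via Cauchy--Schwarz,'' is not correct as stated: a positive semi-definite quadratic form cannot in general be dominated by its diagonal, and Cauchy--Schwarz on the off-diagonal entries only yields a bound of the type $\bigl(\sum_{k}|c_k|\,\||y|^\alpha\varphi_k\|\bigr)^2$, which incurs an $\ell^1$--$\ell^2$ loss that does not close the estimate. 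The integer-$\alpha$ banded-matrix argument is sound (multiplication by $|y_n|^2$ is tridiagonal in the Laguerre basis, so $|y_n|^{2m}$ has bandwidth $O(m)$ and the off-diagonal terms are absorbed into the diagonal up to a constant), and the analytic-interpolation idea could in principle supply the non-integer case, but that would need a genuine Stein-interpolation setup for an analytic family of weights, which your sketch does not provide. As written, the argument establishes the result only for integer $\alpha$, whereas the statement asserts it for all $\alpha\geq 0$; to match the paper you would either need to flesh out the interpolation or, more simply, adopt the paper's route and reduce to the cited Proposition 8.1 by rescaling.
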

\begin{proof}
As $\delta_R F$ is supported in $[1/8, 8]$ and $\delta_{R^2} \Theta$ is supported in $[-2,-1/2] \cup [1/2,2]$, from \cite[Proposition 8.1]{Niedorf_Metivier_group_2023} we get
\begin{align*}
     \int_G \left| |x|^{\alpha} \mathcal{K}_{F(R \sqrt{\mathcal{L}}) \Theta(R^2 2^M T)} (x, u) \right|^2 \, d(x, u) \leq C\ 2^{M(2\alpha -d_2)} \|\delta_R F\|_{L^2} ^2.
\end{align*}
Now making a change of variable $(x,u) \mapsto \delta_R (x,u)$ on the left hand side of the above expression and using Proposition \ref{Prop: Kernel expression}, we can see
\begin{align}
\label{Calculation for weighted Plancherel}
    & \int_G \left| |x|^{\alpha} \mathcal{K}_{F(R \sqrt{\mathcal{L}}) \Theta(R^2 2^M T)}(x,u) \right|^2 \ d(x,u) \\
    &\nonumber= \tfrac{R^{Q +2\alpha}}{(2 \pi)^{2 d_2}} \int_{\mathfrak{g}_1} \int_{\mathfrak{g}_2} \Biggl||x|^{\alpha} \int_{\mathfrak{g}_{2,r}^{*}} \sum_{\mathbf{k} \in \mathbb{N}^N} F(R \sqrt{\lambda_{\mathbf{k}}^{\mu}}) \Theta(R^2 2^{M} |\mu|) \\
    &\nonumber \hspace{7cm} \prod_{n=1}^N \varphi_{k_n}^{(b_n^{\mu}, r_n)}(R_{\mu}^{-1} P_{n,{\mu}} R x) e^{i \langle \mu, R^2 u \rangle} \ d \mu \Biggr|^2 dx\, du \\
    &\nonumber= \tfrac{R^{Q +2\alpha-4d_2}}{(2 \pi)^{2 d_2}} \int_{\mathfrak{g}_1} \int_{\mathfrak{g}_2} \Bigg| |x|^{\alpha} \int_{\mathfrak{g}_{2,r}^{*}} \sum_{\mathbf{k} \in \mathbb{N}^N} F(R \sqrt{\lambda_{\mathbf{k}}^{R^{-2}\mu}}) \Theta(2^{M} |\mu|) \\
    &\nonumber \hspace{6cm} \times \prod_{n=1}^N \varphi_{k_n}^{(b_n^{R^{-2}\mu}, r_n)}(R_{R^{-2}\mu}^{-1} P_{n,R^{-2}{\mu}} R x) e^{i \langle \mu, u \rangle} \ d \mu \Bigg|^2 dx \ du .
\end{align}
First, we simplifies the few terms inside the above expression. Since the functions $\mu \mapsto b_n^{\mu}$ are homogeneous of degree $1$, we have
\begin{align}
\label{Relation of dilated non dilated eigen values}
    \lambda_{\mathbf{k}}^{R^{-2} \mu} = \sum_{n=1}^{N} (2 k_n + r_n) b_n^{R^{-2} \mu} = R^{-2} \sum_{n=1}^{N} (2 k_n + r_n) b_n^{\mu} = R^{-2} \lambda_{\mathbf{k}}^{\mu} ,
\end{align}
and
\begin{align}
\label{Relation dilated nondilated eigen function}
    & \prod_{n=1}^N \varphi_{k_n}^{(b_n^{R^{-2}\mu}, r_n)}(R_{R^{-2}\mu}^{-1} P_{n,R^{-2}{\mu}} R x) \\
    &\nonumber = \prod_{n=1}^N (2\pi)^{r_n/2} (b_n^{R^{-2} \mu})^{r_n/2} \sum_{|\nu|_1=k_n} (b_n^{R^{-2} \mu})^{r_n/2} \Phi_{\nu, \nu}^{1}((b_n^{R^{-2} \mu})^{1/2} R R_{R^{-2} \mu}^{-1} P_{n,R^{-2}{\mu}} x) \\
    &\nonumber = \prod_{n=1}^N (2\pi)^{r_n/2} R^{-r_n} (b_n^{\mu})^{r_n/2} \sum_{|\nu|_1=k_n} R^{-r_n} (b_n^{\mu})^{r_n/2} \Phi_{\nu, \nu}^{1}(R^{-1} (b_n^{\mu})^{1/2} R R_{\mu}^{-1} P_{n,{\mu}} x) \\
    &\nonumber = R^{-d_1} \prod_{n=1}^N (2\pi)^{r_n/2} (b_n^{\mu})^{r_n/2} \sum_{|\nu|_1=k_n} \Phi_{\nu, \nu}^{b_n^{\mu}}(R_{\mu}^{-1} P_{n,{\mu}} x) = R^{-d_1} \prod_{n=1}^N \varphi_{k_n}^{b_n^{\mu}, r_n}(R_{\mu}^{-1} P_{n,{\mu}} x) ,
\end{align}
where in the 1st equality we have used $\varphi_k^{(\lambda, m)}(z) = (2\pi)^{m/2} \lambda^{m/2} \sum_{|\nu|_1=k} \Phi_{\nu, \nu}^{\lambda}(z)$ and $\Phi_{\nu, \nu}^{\lambda}(z) = \lambda^{m/2} \Phi_{\nu, \nu}^{1}(\lambda^{1/2}z)$ (see \cite[Page 12, eq 3.16]{Niedorf_Restriction_Stratified_group_2023}), in the 2nd equality the functions $\mu \mapsto b_n^{\mu}$ are homogeneous of degree $1$ and the functions $\mu \to P_{n,{\mu}}$, $\mu \mapsto R_{\mu}$ are homogeneous of degree $0$.

Therefore substituting \eqref{Relation of dilated non dilated eigen values} and \eqref{Relation dilated nondilated eigen function} in (\ref{Calculation for weighted Plancherel}) we get,
\begin{align*}
    & \int_G \left| |x|^{\alpha} \mathcal{K}_{F(R \sqrt{\mathcal{L}}) \Theta(R^2 2^M T)}(x,u) \right|^2 \ d(x,u) \\
    &= \tfrac{R^{-Q +2\alpha}}{(2 \pi)^{2 d_2}} \int_{\mathfrak{g}_1} \int_{\mathfrak{g}_2} \Bigg||x|^{\alpha} \int_{\mathfrak{g}_{2,r}^{*}} \sum_{\mathbf{k} \in \mathbb{N}^N} F(\sqrt{\lambda_{\mathbf{k}}^{\mu}}) \Theta(2^{M} |\mu|) \prod_{n=1}^N \varphi_{k_n}^{(b_n^{\mu}, r_n)}(R_{\mu}^{-1} P_{n,{\mu}} x) e^{i \langle \mu, u \rangle} \ d \mu \Bigg|^2 dx du \\
    &= R^{-Q +2\alpha} \int_G \left| |x|^{\alpha} \mathcal{K}_{F_M(\mathcal{L}, T)} (x, u) \right|^2 \, d(x, u) .
\end{align*}
Now from this, the proof follows immediately.
\end{proof}

In the sequel, we also require the pointwise weighted kernel estimate.  Let us set $X=(X_1, \ldots, X_{d_1}, T_1, \ldots, T_{d_2})$.
\begin{proposition}
\label{Proposition: Weighted Plancherel with L infinity condition}
Let $\Gamma \in \mathbb{N}^{d_1 + d_2}$. Then for all bounded Borel functions $F : \mathbb{R} \to \mathbb{C}$ supported in $[0,R]$ and for all $\beta \geq 0$, whenever $s> \beta $ we have
\begin{align}
\label{Pointwise Kernel estimate with weight}
    |(1+R \|(x,u)\|)^{\beta} \mathcal{K}_{F(\sqrt{\mathcal{L}})}(x,u) | &\leq C\, R^Q\, \|\delta_R F\|_{L^{\infty}_{s}} \\
   \text{and} \label{Pointwise Kernel estimate with weight second} \quad |(1+R \|(x,u)\|)^{\beta} X^{\Gamma} \mathcal{K}_{F(\sqrt{\mathcal{L}})}(x,u) | &\leq C\, R^{|\Gamma|}\, R^Q\, \|\delta_R F\|_{L^{\infty}_{s}} .
\end{align}
Moreover, we also have
\begin{align}
\label{Pointwise Kernel estimate without weight}
    |\mathcal{K}_{F(\sqrt{\mathcal{L}})}(x,u) | &\leq C\, R^Q\, \|\delta_R F\|_{L^{\infty}} .
\end{align}
\end{proposition}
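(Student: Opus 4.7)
The plan is first to reduce to the case $R=1$ by scaling. The identity
\begin{align*}
\mathcal{K}_{F(\sqrt{\mathcal{L}})}(x,u) = R^{Q}\,\mathcal{K}_{(\delta_R F)(\sqrt{\mathcal{L}})}\bigl(\delta_R(x,u)\bigr),
\end{align*}
which follows from $\mathcal{L}\circ(\,\cdot\circ\delta_R) = R^{2}(\mathcal{L}\,\cdot)\circ\delta_R$, together with $(1+R\|(x,u)\|) = (1+\|\delta_R(x,u)\|)$ and the invariance of $\|\cdot\|_{L^{\infty}_s}$ under the rescaling $F\mapsto \delta_R F$, lets us assume $\mathrm{supp}\,F \subseteq [0,1]$. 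The unweighted bound \eqref{Pointwise Kernel estimate without weight} then follows by applying Proposition \ref{Prop: Kernel expression} with the $\tau$-independent symbol $m(\kappa,\tau)=F(\sqrt{\kappa})$: the spectral constraint $\lambda_{\mathbf{k}}^{\mu}\le 1$ confines the sum/integral to a bounded region in $(\mathbf{k},\mu)$, and standard uniform estimates on the rescaled Laguerre basis together with $|F|\le\|F\|_{L^{\infty}}$ yield $|\mathcal{K}_{F(\sqrt{\mathcal{L}})}(x,u)|\lesssim\|F\|_{L^{\infty}}$.

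For the weighted bound \eqref{Pointwise Kernel estimate with weight}, I would combine Fourier inversion with finite speed of propagation for the wave operator. Splitting $F = F^{e} + F^{o}$ into its even and odd parts,
\begin{align*}
F(\sqrt{\mathcal{L}}) \;=\; c\!\int_{\mathbb{R}}\widehat{F^{e}}(\xi)\cos(\xi\sqrt{\mathcal{L}})\,d\xi \;+\; c\!\int_{\mathbb{R}}\widehat{F^{o}}(\xi)\sin(\xi\sqrt{\mathcal{L}})\,d\xi,
\end{align*}
and the convolution kernels of $\cos(\xi\sqrt{\mathcal{L}})$ and $\sin(\xi\sqrt{\mathcal{L}})$ are supported in $\overline{B(0,|\xi|)}\subseteq G$. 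Thus for $(x,u)$ with $\|(x,u)\|\ge r$ only frequencies $|\xi|\ge r$ contribute. A dyadic decomposition in $|\xi|$, combined with the unweighted pointwise bound applied to smooth frequency-localised truncations of $F$ and with Fourier-side estimates controlled by $\|F\|_{L^{\infty}_s}$, produces the decay $r^{-\beta}$ provided $s>\beta$, the surplus $s-\beta$ being spent in summing a geometric series in the dyadic scale.

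The gradient estimate \eqref{Pointwise Kernel estimate with weight second} is obtained by differentiating the kernel formula of Proposition \ref{Prop: Kernel expression} under the integral, using left-invariance of the $X_j$ and $T_k$: the action of $X_j$ on $\varphi_{k_n}^{(b_n^{\mu},r_n)}(R_{\mu}^{-1}P_{n,\mu}\,\cdot)$ brings down a factor of order $(b_n^{\mu})^{1/2}\lesssim\sqrt{\lambda_{\mathbf{k}}^{\mu}}\le 1$, while $T_k$ acting on the phase $e^{i\langle\mu,u\rangle}$ brings down $|\mu|\lesssim 1$; each derivative thus contributes $O(1)$ on the rescaled spectral support, and undoing the scaling produces the claimed factor $R^{|\Gamma|}$. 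The main obstacle is the sharpness of \eqref{Pointwise Kernel estimate with weight}: obtaining the pointwise decay $(1+R\|(x,u)\|)^{-\beta}$ with only the weak regularity $s>\beta$ is delicate, since a standard $L^{2}$-Plancherel/Sobolev approach would demand $s>\beta+Q/2$. This forces exclusive reliance on the $L^{\infty}$-spectral density bound of the first step together with the sharp finite-speed-of-propagation dyadic argument, with no room for any losses in either ingredient.
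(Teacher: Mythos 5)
Your proposal takes a genuinely different route from the paper's, but it has a gap in the key step.

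The paper proves the unweighted bound \eqref{Pointwise Kernel estimate without weight} by factoring $F(\sqrt{\lambda}) = \bigl(F(\sqrt{\lambda})e^{\lambda/R^2}\bigr)\,e^{-\lambda/R^2}$, writing the kernel as a convolution of the two factors' kernels, and controlling each factor through the $L^2\to L^2$ bound and the $L^2$ norm of the heat kernel (equations (3.11)--(3.13)); it then proves the weighted and gradient bounds by Fourier inversion in \emph{resolvent/complex-heat-semigroup} form $F(\sqrt{\lambda})=\tfrac{1}{2\pi}\int\widehat{G}(\tau)e^{-(1-i\tau)R^{-2}\lambda}d\tau$, using the complex-time Gaussian estimate (3.14)--(3.15) and finishing with a Duong--Ouhabaz--Sikora interpolation to shave the $+1/2$ off the regularity. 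You instead propose (a) to get the unweighted bound from the explicit Laguerre kernel formula, and (b) to get the weighted bound by finite speed of propagation together with a Littlewood--Paley decomposition of $F$ on the Fourier side.

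The gap is in (b). Your argument applies ``the unweighted pointwise bound'' to each frequency-localised piece $F_j$, but the unweighted bound you have (the paper's \eqref{Pointwise Kernel estimate without weight}) is proved under the assumption that $F$ is supported in $[0,R]$ --- and this compact support is essential to the heat-kernel factorization, since $\|F(\sqrt{\cdot})e^{\cdot/R^2}\|_{L^\infty}\lesssim\|F\|_{L^\infty}$ only because $e^{\lambda/R^2}$ is bounded on $\operatorname{supp} F$. Each $F_j$, being frequency-localised, is no longer compactly supported in $\lambda$, and in fact $F_j(\sqrt{\lambda})e^{\lambda/R^2}$ is unbounded. So you cannot invoke \eqref{Pointwise Kernel estimate without weight} for $F_j$ as stated; you would need a separate pointwise bound for Schwartz-type symbols with rapid decay but noncompact support, and that is not supplied. (It is fixable with additional work, but the step as written does not close.) Your part (a) is also substantially sketchier than the paper's argument: the claim that ``standard uniform estimates on the rescaled Laguerre basis'' suffice needs real justification, because as $|\mu|\to 0$ the index set $\{\mathbf{k}:\lambda_{\mathbf{k}}^{\mu}\le 1\}$ is unbounded, and one has to track the $\mu$-integral and $\mathbf{k}$-sum carefully together with the $\lambda^{r}$ scaling of $\varphi_k^{(\lambda,r)}$. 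Finally, a small point of accuracy: the ``naive $L^2$-Plancherel/Sobolev'' approach you mention loses $Q/2$ in regularity, but the paper does not use it; the paper's complex-time-heat-kernel estimate loses only $1/2$ before interpolation, which is what the Duong--Ouhabaz--Sikora step is for --- you omit any such interpolation step, and it is unclear how your route would reach the optimal $s>\beta$ rather than $s>\beta+1$ once the correct decay of the localised pieces is tracked.
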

\begin{proof}
Note that enough to prove the estimate \eqref{Pointwise Kernel estimate with weight second}, as the estimate \eqref{Pointwise Kernel estimate with weight} follows from \eqref{Pointwise Kernel estimate with weight second} by just taking $\Gamma=0$. The main ingredients to proof this result is the following Gaussian bound for the heat kernel and its gradient estimate.
    
It is known that for all $t>0$, the heat kernel $\mathcal{K}_{\exp{(-t\mathcal{L})}}$ satisfies the following estimates (see \cite{Varopoulos_Analysis_Lie_Group_1988}, \cite{Alexopoulos_Spectral_multi_poly_growth_1994}).
\begin{align*}
    |X^{\Gamma} \mathcal{K}_{\exp{(-t\mathcal{L})}}(x,u)| & \leq C t^{-|\Gamma|/2} t^{-Q/2} \exp{\left\{-c \tfrac{\|(x,u)\|^2}{t} \right\}} \quad \text{for} \quad t>0 .
\end{align*}
Then we have
\begin{align*}
    \int_{G \setminus B(0,r)} \left|X^{\Gamma} \mathcal{K}_{\exp(-t\mathcal{L})} (x,u) \right|^2 \ d(x,u) &\leq C t^{-|\Gamma|} t^{-Q} e^{-c\frac{r^2}{t}} \int_{G \setminus B(0,r)} \exp{\left\{-c \tfrac{\|(x,u)\|^2}{t} \right\}} \ d(x,u) \\
    &\leq C t^{-|\Gamma|} t^{-Q/2} e^{-c\frac{r^2}{t}} ,
\end{align*}
where we have used the fact $\int_{G} \exp{\{-c \tfrac{\|(x,u)\|^2}{t}\}} \ d(x,u) \leq C\, t^{Q/2} $. In particular, we get
\begin{align}
\label{L2 norm of heat kernel}
    \int_{G} \left|X^{\Gamma} \mathcal{K}_{\exp(-t\mathcal{L})} (x,u) \right|^2 \ d(x,u) &\leq C t^{-|\Gamma|} t^{-Q/2} .
\end{align}
We will use some ideas from \cite[Lemma 4.2, 4.3]{Bui_Duong_Spectral_multiplier__Trieble_Lizorkin_2021}. Let us set $F_1(\lambda)=F(\sqrt{\lambda}) e^{\lambda/R^2}$ and $F_2(\lambda)= e^{-\lambda/R^2}$. Then $F(\sqrt{\lambda})= F_1(\lambda) F_2(\lambda)$ and therefore we can write
\begin{align}
\label{Kernel in terms of convolution of to kernel}
    \mathcal{K}_{F(\sqrt{\mathcal{L}})}(x,u) &= \mathcal{K}_{F_1(\mathcal{L})} * \mathcal{K}_{F_2(\mathcal{L})}(x,u) \\
    &\nonumber = \int_{G} \mathcal{K}_{F_1(\mathcal{L})}((z,s)^{-1}(x,u)) \mathcal{K}_{F_2(\mathcal{L})}(z,s) \ d(z,s).
\end{align}
So that from \eqref{L2 norm of heat kernel} and \eqref{Kernel in terms of convolution of to kernel}, we get
\begin{align}
\label{L2 estimate of kernel with gradient}
    \int_{G} |X^{\Gamma} \mathcal{K}_{F(\sqrt{\mathcal{L}})}(x,u)|^2 \ d(x,u) &\leq C \|F_1(\mathcal{L})\|_{L^2 \to L^2}^2 \|X^{\Gamma} \mathcal{K}_{e^{-R^{-2}\mathcal{L}}}\|_{L^2}^2 \\
    &\nonumber \leq C \|F_1\|_{L^{\infty}}^2\, R^{2|\Gamma|}\, R^{Q} \\
    &\nonumber \leq C R^{2|\Gamma|}\, R^{Q}\, \|F\|_{L^{\infty}}^2.
\end{align}
Therefore, using H\"older's inequality, \eqref{L2 norm of heat kernel} and \eqref{L2 estimate of kernel with gradient} from \eqref{Kernel in terms of convolution of to kernel} we can see
\begin{align}
\label{Estimate: kernel without weight}
    & |X^{\Gamma} \mathcal{K}_{F(\sqrt{\mathcal{L}})}(x,u)| \\
    &\nonumber \leq \left(\int_{G} |\mathcal{K}_{F_1(\mathcal{L})}((z,s)^{-1}(x,u))|^2 \ d(z,s) \right)^{1/2} \left(\int_G |X^{\Gamma} \mathcal{K}_{e^{-R^{-2}\mathcal{L}}}(z,s)|^2 \ d(z,s)\right)^{1/2} \\
    &\nonumber \leq C R^{Q/2}\, \|F\|_{L^{\infty}}\, R^{|\Gamma|}\, R^{Q/2} \\
    &\nonumber \leq C R^{|\Gamma|}\, R^{Q}\, \|\delta_R F\|_{L^{\infty}} .
\end{align}
Now, let us set $G(\lambda)= F(\sqrt{R^2 \lambda}) e^{\lambda} $. Then we have $F(\sqrt{\lambda})= G(R^{-2} \lambda) e^{-R^{-2}\lambda}$. Therefore, by Fourier inversion formula we can write
\begin{align*}
    F(\sqrt{\lambda}) &= \frac{1}{2 \pi} \int_{\mathbb{R}} \widehat{G}(\tau) \exp{(-(1-i \tau)R^{-2} \lambda)} \  d\tau ,
\end{align*}
From this we can easily see that
\begin{align}
\label{Expressing kernel in terms of Fourir trnasform}
    \mathcal{K}_{F(\sqrt{\mathcal{L}})}(x,u) &= \frac{1}{2\pi} \int_{\mathbb{R}} \widehat{G}(\tau) \mathcal{K}_{\exp{(-(1-i \tau)R^{-2} \mathcal{L}})}(x,u) \ d\tau .
\end{align}
Then similarly as in \cite[Theorem 7.3]{Ouhabaz_Analysis_heat_equation_domain_2005}, one can show that
\begin{align*}
    |X^{\Gamma} \mathcal{K}_{\exp{(-(1-i \tau)R^{-2} \mathcal{L}})}(x,u)| & \leq C R^{|\Gamma|}\, R^Q\, \exp{\left\{-c \tfrac{R^2 \|(x,u)\|^2}{(1+\tau^2)} \right\}} .
\end{align*}
From this for any $\beta \geq 0$ we get
\begin{align}
\label{Gradient estimate of heat kernel with weight}
    |X^{\Gamma} \mathcal{K}_{\exp{(-(1-i \tau)R^{-2} \mathcal{L}})}(x,u)| (1+R \|(x,u)\|)^{\beta} &\leq C R^{|\Gamma|}\, R^Q\, (1+|\tau|)^{\beta} .
\end{align}
Therefore, from \eqref{Expressing kernel in terms of Fourir trnasform}, \eqref{Gradient estimate of heat kernel with weight} and using Cauchy-Schwartz inequality for some $\varepsilon>0$ we have
\begin{align}
\label{Estimate: kernel with weight}
    & |X^{\Gamma} \mathcal{K}_{F(\sqrt{\mathcal{L}})}(x,u)| (1+R \|(x,u)\|)^{\beta} \\
    &\nonumber \leq C \int_{\mathbb{R}} |\widehat{G}(\tau)| |X^{\Gamma} \mathcal{K}_{\exp{(-(1-i \tau)R^{-2} \mathcal{L}})}(x,u)| (1+R \|(x,u)\|)^{\beta} \ d\tau \\
    &\nonumber \leq C R^{|\Gamma|}\, R^Q\, \left
    (\int_{\mathbb{R}} |\widehat{G}(\tau)|^2 (1+|\tau|^2)^{\beta+1/2+\varepsilon} \ d\tau \right)^{1/2} \left(\int_{\mathbb{R}} (1+|\tau|^2)^{-1/2-\varepsilon} \ d\tau \right)^{1/2} \\
    &\nonumber \leq C R^{|\Gamma|}\, R^Q\, \|G\|_{L^2_{\beta+1/2+\varepsilon}} \\
    &\nonumber \leq C R^{|\Gamma|}\, R^Q\, \|\delta_R F\|_{L^{\infty}_{\beta+1/2+\varepsilon}} .
\end{align}
Then interpolating (\ref{Estimate: kernel without weight}) and (\ref{Estimate: kernel with weight}) as in \cite[Lemma 4.3]{Duong_Ouhabaz_Sikora_Sharp_Multiplier_2002} for any $\beta \geq 0$, we get
\begin{align*}
    |X^{\Gamma} \mathcal{K}_{F(\sqrt{\mathcal{L}})}(x,u)| (1+R \|(x,u)\|)^{\beta} &\leq C R^{|\Gamma|}\, R^Q\, \|\delta_R F\|_{L^{\infty}_{\beta+\varepsilon}} .
\end{align*}
This completes the proof of the estimate \eqref{Pointwise Kernel estimate with weight second}. Now the proof of \eqref{Pointwise Kernel estimate without weight} follows from \eqref{Estimate: kernel without weight} by taking $\Gamma=0$.
\end{proof}

\section{Boundedness of Bochner-Riesz commutator} \label{Section: proof of boundedness of commutator}

In this section, we prove Theorem \ref{Theorem: Multiplier for Commutator}, Theorem \ref{Theorem: Bochner-Riesz commutator on Metivier} and Theorem \ref{Theorem: Bochner-riesz commuator on Heisenber-type group}. Note that Theorem \ref{Theorem: Bochner-Riesz commutator on Metivier} is an easy consequence of Theorem \ref{Theorem: Multiplier for Commutator} by setting $F(\lambda)= (1- \lambda^2)_{+} ^{\alpha}$, $t=1$ and using the fact that $F\in L^2_{\beta}$ if and only if $\alpha > \beta - 1/2$. Therefore let us start with the proof of Theorem \ref{Theorem: Multiplier for Commutator}.

\begin{proof}[Proof of Theorem~\ref{Theorem: Multiplier for Commutator}]
    
By duality and interpolation argument, in order to prove Theorem \ref{Theorem: Multiplier for Commutator}, it is enough to consider the case $q\in (p,2)$.

We first break $F$ in the Fourier transform side. To this end,  we choose an even bump function $\eta$ supported in $[-1, -1/4] \cup [1/4, 1]$ such that $\sum_{i\in \mathbb{Z}} \eta (2^{-i}\lambda) = 1$ for all $\lambda \neq  0$. Set $\eta_0 (\lambda) = 1 - \sum_{i\geq 1} \eta(2^{-i} \lambda)$ and $\eta_i (\lambda) = \eta(2^{-i} \lambda), \,i \in \mathbb{N}$. Using this let us define
\begin{align*}
    F^{(i)}(\lambda) = \frac{1}{2\pi} \int_{\mathbb{R}} \eta_i (\tau) \hat{F}(\tau) \cos \lambda \tau\, d\tau, \quad \mbox{for}\,\,i \geq 0.
\end{align*}
Then via functional calculus, we have  
\begin{align}
\label{equation: breaking of F into fourier transform side}
    F(t\sqrt{\mathcal{L}})f &= \sum_{i \geq 0} F^{(i)} (t\sqrt{\mathcal{L}})f .
\end{align}
Note that from \cite[Lemma 4.4]{Niedorf_Metivier_group_2023}, the sub-Laplacian $\mathcal{L}$ satisfies the finite speed propagation property, therefore by \cite[Lemma 2.1]{Chen_Tian_Ward_Commutator_Bochner_Riesz_Elliptic_2021} we have $\supp{K_{F^{(i)}(t\sqrt{L})}} \subseteq \mathcal{D}_{2^i t}$. So that from Lemma (\ref{Deduction: commutator to operator}), choosing $r=2^i t$ we get,
\begin{align*}
   & \|[b, F^{(i)} (t\sqrt{\mathcal{L}})]f\|_{L^q(G)}^q \\
   & \leq C \sum_k \Bigl(\| \chi_{4B_k}\, (b - b_{B_0})\, F^{(i)} (t\sqrt{\mathcal{L}}) \chi_{\widetilde{B}_k} f \|_{L^q(G)}^q + \| \chi_{4 B_k}\, F^{(i)} (t\sqrt{\mathcal{L}}) (\chi_{\widetilde{B}_k}\, (b - b_{B_0}) f )\|_{L^q(G)}^q  \Bigr).
\end{align*}
Thus, in view of (\ref{equation: breaking of F into fourier transform side}), to prove Theorem (\ref{Theorem: Multiplier for Commutator}), it is enough to show that for some $\delta>0$ and for all $i\geq 0$,
\begin{equation}
\label{equation: inequality for F^i}
  \|[b, F^{(i)}(t\sqrt{\mathcal{L}})]f\|_{L^q(G)} \leq C 2^{-\delta i} \|b\|_{BMO^{\varrho}(G)} \|F\|_{L^2_{\beta}(\mathbb{R})} \|f\|_{L^q(G)}.
\end{equation}

Since $\mathcal{L}$ is left-invariant, in view of the previous two inequalities, we are reduced to showing that
\begin{align}\label{equation: red1}
 \| \chi_{4 B_0}\, (b - b_{B_0})\, F^{(i)} (t\sqrt{\mathcal{L}}) \chi_{\widetilde{B}_0} f \|_{L^q(G)} &\leq C 2^{-\delta i} \|b\|_{BMO^{\varrho}(G)} \|F\|_{L^2_{\beta}(\mathbb{R})} \|\chi_{\widetilde{B}_0} f\|_{L^q(G)} , 
\end{align}
 and 
 \begin{align}\label{equation: red2}
  \| \chi_{4 B_0}\, F^{(i)} (t\sqrt{\mathcal{L}}) (\chi_{\widetilde{B}_0}\, (b - b_{B_0}) f )\|_{L^q(G)} &\leq C 2^{-\delta i} \|b\|_{BMO^{\varrho}(G)} \|F\|_{L^2_{\beta}(\mathbb{R})} \|\chi_{\widetilde{B}_0} f\|_{L^q(G)} ,
\end{align}
for some $\delta>0$, where $\Tilde{B}_{0,k}$ is the translation of $\Tilde{B}_k$ via $(-x_k,-u_k)$ to the point $(0,0)$ and by abuse of notation we write $\Tilde{B}_{0,k}$ as $\Tilde{B}_{0}$ (see Lemma \ref{Deduction: commutator to operator}).

\begin{proof}[Proof of \eqref{equation: red1}]
We first choose a bump function $\psi \in C_c ^{\infty}(\mathbb{R})$  with support in $[1/16, 4]$ and $\psi = 1$ in $[1/8, 2]$. Then
\begin{align}\label{equation: decomposition of S_1}
    & \| \chi_{4 B_0}\, (b - b_{B_0})\, F^{(i)} (t\sqrt{\mathcal{L}}) \chi_{\widetilde{B}_0} f \|_{L^q(G)}^q \\
    & \nonumber \leq C \|\chi_{4 B_0}\, (b - b_{B_0})\, (\psi F^{(i)})(t\sqrt{\mathcal{L}}) \chi_{\widetilde{B}_0} f\|_{L^q(G)}^q \\
    & \hspace{5cm} + C \|\chi_{4 B_0}\, (b - b_{B_0})\, ((1-\psi) F^{(i)}) (t\sqrt{\mathcal{L}}) \chi_{\widetilde{B}_0} f\|_{L^q(G)}^q \nonumber.
\end{align}
Let us first focus our attention to the term
 \begin{align*}
     \| \chi_{4 B_0} (b- b_{B_0}) (\psi F^{(i)})(t\sqrt{\mathcal{L}}) (\chi_{\widetilde{B}_0}f)  \|_{L^q(G)}.
 \end{align*}
We decompose the multiplier $\psi F^{(i)}$ by a dyadic decomposition in the $|\mu|$ variable. From \eqref{Definition: Cutoff function theta} and (\ref{Introducing theta in multiplier}), for $M \in \mathbb{Z}$, we set $t=R^{-1}$ and define $F^{(i)}_{M,t} : \mathbb{R} \times \mathbb{R} \rightarrow \mathbb{C}$ given by 
\begin{align}
\label{equation: cutoff in the T variable}
   F^{(i)} _{M,t}(\kappa, \tau) =( \psi F^{(i)})(t\sqrt{\kappa})\, \Theta( 2^M \tau)\quad \mbox{for}\,\, \kappa \geq 0  
\end{align}
and $F^{(i)}_{M,t}(\kappa, \tau) = 0$  else. Then we can write
\begin{align*}
    ( \psi F^{(i)})(t\sqrt{\mathcal{L}}) = \sum_{M \in \mathbb{Z} } F^{(i)} _{M,t} (\mathcal{L}, T) .
\end{align*}
Recall that the function $\mu \mapsto b^{\mu}$ is homogeneous of degree $1$, therefore $b^{\mu} = |\mu| b^{\Bar{\mu}}$, where $\Bar{\mu} = |\mu|^{-1} \mu$. Now as we have $t \sqrt{\lambda_{\mathbf{k}}^{\mu}} \in \supp{\psi F^{(i)}}$ and $2^{M} |\mu| \in \supp{\Theta}$, therefore
\begin{align}
    \label{Inequality: Vanishing of multiplier for l small}
        t^{-2} \sim \lambda_{\mathbf{k}}^{\mu} \geq \sum_{n=1}^{N} r_n b_n^{\mu} = |\mu| \sum_{n=1}^{N} r_n b_n^{\Bar{\mu}} \sim t^{-2} 2^{-M} \sum_{n=1}^{N} r_n b_n^{\Bar{\mu}} .
\end{align}
Note that from \cite[equation 2.7, Lemma 5]{Martini_Muller_New_Class_Two_Step_Stratified_Groups_2014} we have
\begin{align*}
        \sum_{n=1}^{N} r_n b_n^{\mu} \sim \left( \sum_{n=1}^{N} 2 r_n (b_n^{\mu})^2 \right)^{1/2} = (tr(J_{\mu}^* J_{\mu}))^{1/2}.
    \end{align*}
As $(tr(J_{\mu}^* J_{\mu}))^{1/2}$ is the pullback of the Hilbert-Schmidt norm via the injective map $\mu \mapsto J_{\mu}$, therefore $\sum_{n=1}^{N} r_n b_n^{\mu}$ does not vanish for all $\mu \neq 0$. Also from Proposition \ref{Prop: structural properties} all maps $\mu \mapsto b_n^{\mu}$ are continuous on $\mathfrak{g}_2^{*}$. As $\Bar{\mu} \in \{ \mu \in \mathfrak{g}_2^{*} : |\mu|=1\}$, from (\ref{Inequality: Vanishing of multiplier for l small}) we get $2^{-M} \lesssim 1$. Therefore there exists $l_0 \in \mathbb{N}$ such that
\begin{align*}
  ( \psi F^{(i)})(t\sqrt{\mathcal{L}}) = \sum_{M \geq -l_0 } F^{(i)} _{M,t} (\mathcal{L}, T) .
\end{align*}
With the aid of this decomposition, we write
\begin{align}
\label{First decomposition of multiplier in theta}
   \chi_{4 B_0}  (b - b_{B_0}) ( \psi F^{(i)})(t\sqrt{\mathcal{L}}) (\chi_{\widetilde{B}_0}f) &= \chi_{4 B_0}  (b - b_{B_0}) \Bigl( \sum_{M = -l_0 } ^{i} +  \sum_{M = i +1 } ^{\infty}  \Bigr) F^{(i)} _{M,t} (\mathcal{L}, T) (\chi_{\widetilde{B}_0}f) .
\end{align}
We distinguish the following cases. The case $M\geq i+1$ is easier to handle. Applying H\"older's inequality, we get
\begin{align*}
    &\| \chi_{4 B_0}  (b - b_{B_0}) \sum_{M = i + 1} ^{\infty} F^{(i)} _{M,t} (\mathcal{L}, T) (\chi_{\widetilde{B}_0}f) \|_{L^q(G)} \\
    & \leq \sum_{M = i + 1}^{\infty} \|\chi_{4 B_0}\, (b - b_{B_0})\|_{L^{\frac{2q}{2-q}}(G)} \| F^{(i)} _{M,t} (\mathcal{L}, T) (\chi_{\widetilde{B}_0}f)\|_{L^2(G)} .
\end{align*}
Then by (\ref{Cherecterisation of BMO}) and (\ref{Difference of average in terms of BMO}), we get
\begin{align}
\label{Inequality: BMO norm calculation} 
   \|\chi_{4 B_0}\, (b - b_{B_0})\|_{L^{\frac{2q}{2-q}}(G)} &\leq C \,\|b\|_{BMO^{\varrho}(G)}\,|B_0|^{\frac{2-q}{2q}}  .
 \end{align}
Therefore, using the above estimate we can see
\begin{align}\label{equation: brake 1}
     &\| \chi_{4 B_0}  (b - b_{B_0}) \sum_{M = i + 1}^{\infty} F^{(i)}_{M,t} (\mathcal{L}, T) (\chi_{\widetilde{B}_0}f)\|_{L^q(G)} \\
     &\nonumber \leq C \sum_{M = i + 1} ^{\infty} \|b\|_{BMO^{\varrho}(G)} (2^i t)^{Q(1/q- 1/2)}\, \| F^{(i)} _{M,t} (\mathcal{L}, T) (\chi_{\widetilde{B}_0}f)\|_{L^2(G)} .
\end{align}
Now, applying Corollary \ref{Inequality: Truncated restriction equation weak form I} and (\ref{equation: cowling-sikora norm relation}), we get for $\beta > 1/2$
\begin{align}\label{equation: brake 2}
    & \| F^{(i)} _{M,t} (\mathcal{L}, T) (\chi_{\widetilde{B}_0}f)\|_{L^2(G)}\\
    &\nonumber \lesssim  t^{-Q(1/p - 1/2)}\, 2^{-Md_2(1/p - 1/2)}\, \|\delta_{t^{-1}}(\psi F^{(i)})(t\cdot) \|_{2^M, 2}\, \|\chi_{\widetilde{B}_0}f\|_{L^p(G)}\\
    &\nonumber \lesssim t^{-Q(1/p - 1/2)}\, 2^{-Md_2(1/p - 1/2)}\, (2^i t)^{Q(1/p- 1/q)} \Bigl( \| F^{(i)}\|_{L^2} + 2^{-M\beta}\| F^{(i)}\|_{L^2 _{\beta}} \Bigr)\|\chi_{\widetilde{B}_0}f\|_{L^q(G)} .
\end{align}
Substituting the estimate (\ref{equation: brake 2}) into (\ref{equation: brake 1}), leads us
\begin{align}
\label{equation: estimate of remainder term of S_{11}}
     & \| \chi_{4 B_0}  (b - b_{B_0}) \sum_{M = i + 1} ^{\infty} F^{(i)} _{M,t} (\mathcal{L}, T) (\chi_{\widetilde{B}_0}f)\|_{L^q(G)} \\
     &\nonumber \lesssim \|b\|_{BMO^{\varrho}(G)} 2^{id(1/p- 1/2)} \bigl(\| F^{(i)}\|_{L^2} + 2^{-i\beta}\| F^{(i)}\|_{L^2_{\beta}}  \bigr)\, \|\chi_{\widetilde{B}_0}f\|_{L^q(G)}\\
     &\nonumber \lesssim \|b\|_{BMO^{\varrho}(G)} 2^{i(d(1/p- 1/2)- \beta)}\, \| F\|_{L^2_{\beta}}\,\|\chi_{\widetilde{B}_0}f\|_{L^q(G)}\\
     &\nonumber \lesssim \|b\|_{BMO^{\varrho}(G)} 2^{-i \varepsilon}\, \| F\|_{L^2_{\beta}}\,\|\chi_{\widetilde{B}_0}f\|_{L^q(G)},
\end{align}
provided $ \beta > d(1/p - 1/2)$.

Note that for $1\leq p \leq p_{d_2}$, we have $d(1/p - 1/2) > 1/2$. Therefore, for $\beta > d(1/p - 1/2)$, we obtain
\begin{align}
\label{Inequality: Error part for truncation}
     \left\| \chi_{4 B_0}  (b - b_{B_0}) \sum_{M = i + 1} ^{\infty} F^{(i)} _{M,t} (\mathcal{L}, T) (\chi_{\widetilde{B}_0}f) \right\|_{L^q(G)} &\lesssim \|b\|_{BMO^{\varrho}(G)} 2^{-i \delta}\, \| F\|_{L^2_{\beta}}\,\|\chi_{\widetilde{B}_0}f\|_{L^q(G)}, 
\end{align}
for some $\delta >0$.

Now we deal with the case $-l_0 \leq M \leq i$. Using the fact \eqref{Decomposition of ball into Euclidean balls}, for each $M$, first we decompose $\supp{(\chi_{\widetilde{B}_0}f)} \subseteq B\bigr((0, 0), \tfrac{2^i t}{10} \bigl)$ in the first layer as 
\begin{align}
\label{Equation: Decomposition of ball into smaller balls}
    \supp{(\chi_{\widetilde{B}_0}f)} &= \bigcup_{m= 1} ^{N_M} B_m ^{M}, 
\end{align}
where $B_m ^{M} \subseteq B^{|\cdot|} (x_m^{M}, C \tfrac{2^M t}{10}) \times B^{|\cdot|}(0, C \tfrac{2^{2i} t^2}{100})$ are disjoint subsets and $|x_m^{M}-x_{m'}^{M}|> 2^M t/20$ for $m \neq m'$. Note that $N_M$ is bounded by some constant times $2^{(i-M)d_1}$. For $\gamma>0$, we also define
\begin{align*}
    \Tilde{B}_m^{M} :=  B^{|\cdot|}(x_m ^{M}, 2C\, \tfrac{2^M t}{10}\, 2^{i\gamma}) \times B^{|\cdot|}(0, 4 C \tfrac{2^{2i} t^2}{100}) .
\end{align*}
Then the number of overlapping balls $N_{\gamma}$ can be bounded by $2^{C \gamma i}$.

With the aid of this decomposition, we write
\begin{align}
\label{Decomposition of the operator in error and main part}
    & \chi_{4B_0} (b - b_{B_0}) \sum_{M= -l_0} ^i F^{(i)} _{M,t} (\mathcal{L}, T) \chi_{\widetilde{B}_0}f
    = \chi_{4B_0} (b - b_{B_0}) \sum_{M= -l_0} ^i \sum_{m= 1} ^{N_M} \chi_{\Tilde{B}_m ^{M}} F^{(i)}_{M,t} (\mathcal{L}, T) \chi_{B_m ^{M}}f \\
    &\nonumber \hspace{6cm} + \chi_{4B_0} (b - b_{B_0}) \sum_{M= -l_0} ^i \sum_{m= 1} ^{N_M} (1-\chi_{\Tilde{B}_m ^{M}}) F^{(i)}_{M,t} (\mathcal{L}, T) \chi_{B_m ^{M}}f .
\end{align}
For the second term in the right hand side, to estimate
\begin{align*}
  \left\|\chi_{4B_0} \sum_{M= -l_0} ^i \sum_{m= 1} ^{N_M} (1-\chi_{\Tilde{B}_m ^{M}}) F^{(i)}_{M,t} (\mathcal{L}, T) \chi_{B_m ^{M}}f \right\|_{L^p(G) \rightarrow L^2(G)},
\end{align*}
via Riesz-Thorin interpolation theorem, we interpolate between the $L^1(G) \to L^2(G)$ estimate and $L^2(G) \to L^2(G)$ estimate. Let us first start with $L^1(G) \to L^2(G)$ estimate.

Let $\mathcal{K}^{(i)} _{M,t}$ be the convolution kernel of $ F^{(i)} _{M,t} (\mathcal{L}, T)$. Then, by Minkowski's integral inequality,
\begin{align}\label{equation: pre L1-L1 estimate}
    & \left\|\chi_{4B_0} \sum_{M= -l_0} ^i \sum_{m= 1} ^{N_M} (1-\chi_{\Tilde{B}_m ^{M}}) F^{(i)}_{M,t} (\mathcal{L}, T) \chi_{B_m ^{M}}f \right\|_{ L^2(G)}\\
    & \nonumber\leq \sum_{M= -l_0} ^i \sum_{m= 1} ^{N_M} \int_{B_m ^{M}} |f(x', u')|\left(\int_{G} \chi_{4B_0}^2(1-\chi_{\Tilde{B}_m ^{M}})^2 |\mathcal{K}^{(i)} _{M,t} \bigl((x', u')^{-1}(x,u)\bigr)|^2\, d(x,u) \right)^{\frac{1}{2}}\, d(x',u').
\end{align}
Suppose $(x,u) \in \chi_{4B_0} (1-\chi_{\Tilde{B}_m ^{M}}) $ and $(x', u')\in B_m^{M}$, then we have $|x'-x| \gtrsim 2^M t\, 2^{i\gamma}$. Now for $\Bar{N} \in \mathbb{N}$, by Proposition \ref{Theorem : First layer weighted Plancherel},  we have
\begin{align*}
  &\Bigl(\int_{G} \chi_{4B_0}^2(1-\chi_{\Tilde{B}_m ^{M}})^2 |\mathcal{K}^{(i)} _{M,t} \bigl((x', u')^{-1}(x,u)\bigr)|^2\, d(x,u) \Bigr)^{\frac{1}{2}} \\
  &\nonumber \leq C (2^M t \,2^{i\gamma})^{-\Bar{N}} \Bigl(\int_{G}\, \left| |x-x'|^{\Bar{N}} \mathcal{K}^{(i)} _{M,t} \bigl(x-x', u-u'-\tfrac{1}{2}[x',x] \bigr)\right|^2\, d(x,u) \Bigr)^{\frac{1}{2}}\, \\
  & \nonumber\leq C (2^M t\, 2^{i\gamma})^{-\Bar{N}} \Bigl(\int_{G}\, \left| |x|^{\Bar{N}} \mathcal{K}^{(i)} _{M,t} (x, u)\right|^2\, d(x,u) \Bigr)^{\frac{1}{2}}\, \\
  & \nonumber\lesssim  (2^M t \,2^{i\gamma})^{-\Bar{N}} \,t^{\Bar{N}}\, t^{-Q/2}\, 2^{M(\Bar{N}-d_2/2)}\, \|\delta_{t^{-1}}(\psi F^{(i)})(t\cdot)\|_{L^2} .
\end{align*}
Using this estimate in (\ref{equation: pre L1-L1 estimate}), we get
\begin{align}\label{eqquation: L1-L1 estimate}
     \left\|\chi_{4B_0} \sum_{M= -l_0} ^i \sum_{m= 1} ^{N_M} (1-\chi_{\Tilde{B}_m ^{M}}) F^{(i)}_{M,t} (\mathcal{L}, T) \chi_{B_m ^{M}}f \right\|_{ L^2(G)} &\lesssim \| F^{(i)}\|_{L^2} 2^{-i\gamma \Bar{N}}\, t^{-Q/2}\,2^{i \varepsilon}\,\|\chi_{\widetilde{B}_0}f\|_{L^1(G)}.
\end{align}
On the other hand, for $L^2(G) \to L^2(G)$ estimate, by Plancherel theorem we have
\begin{align*}
    \left\|\chi_{4B_0} \sum_{M= -l_0} ^i \sum_{m= 1} ^{N_M} (1-\chi_{\Tilde{B}_m ^{M}}) F^{(i)}_{M,t} (\mathcal{L}, T) \chi_{B_m ^{M}}f \right\|_{ L^2(G)} & \leq \|\psi F^{(i)}\|_{L^{\infty}} \sum_{M= -l_0} ^i \sum_{m= 1} ^{N_M} \|\chi_{B_m ^{M}}f \|_{L^2(G)}.
\end{align*}
Note that for $s>1/2$, Sobolev embedding theorem gives $\| F^{(i)}\|_{L^{\infty}} \leq \| F^{(i)}\|_{L^2 _{s}} \sim 2^{i s} \| F^{(i)}\|_{L^2}$. Then using Cauchy-Schwartz inequality together with the upper bound of $N_M$, we get
\begin{align}\label{equation: L2-L2}
  & \left\|\chi_{4B_0} \sum_{M= -l_0} ^i \sum_{m= 1} ^{N_M} (1-\chi_{\Tilde{B}_m ^{M}}) F^{(i)}_{M,t} (\mathcal{L}, T) \chi_{B_m ^{M}}f \right\|_{ L^2(G)}  \\
  &\nonumber \leq 2^{i s} \| F^{(i)}\|_{L^2} 2^{i \varepsilon} 2^{i d_1/2}\, \|\chi_{\widetilde{B}_0}f\|_{L^2(G)} \lesssim 2^{i s'} \| F^{(i)}\|_{L^2}\, \|\chi_{\widetilde{B}_0}f\|_{L^2(G)},
\end{align}
for $s'> s + d_1/2$.

Therefore, interpolating between the estimates (\ref{eqquation: L1-L1 estimate}) and (\ref{equation: L2-L2}), we have for $1\leq p < 2$,
\begin{align}
\label{Interpolation for all points}
    & \left\|\chi_{4B_0} \sum_{M= -l_0} ^i \sum_{m= 1} ^{N_M} (1-\chi_{\Tilde{B}_m ^{M}}) F^{(i)}_{M,t} (\mathcal{L}, T) \chi_{B_m ^{M}}f \right\|_{ L^2(G)} \\
    &\nonumber \lesssim 2^{i \varepsilon} 2^{-2i\gamma \Bar{N}(1/p - 1/2)}\, t^{-Q(1/p -1/2)}\, 2^{2i s' (1- 1/p)}\, \|F^{(i)}\|_{L^2}\, \|\chi_{\widetilde{B}_0}f\|_{L^p(G)} .
\end{align}
This estimate together with H\"older's inequality and (\ref{Inequality: BMO norm calculation}) gives
\begin{align}\label{equation: estimate of negligable part of S_{11}}
    & \left\|\chi_{4B_0} (b - b_{B_0}) \sum_{M= -l_0} ^i \sum_{m= 1} ^{N_M} (1-\chi_{\Tilde{B}_m ^{M}}) F^{(i)}_{M,t} (\mathcal{L}, T) \chi_{B_m ^{M}}f \right\|_{ L^q(G)}\\
    & \lesssim 2^{i \varepsilon} (2^i t)^{Q(1/q- 1/2)}\,\|b\|_{BMO^{\varrho}(G)}\left\|\chi_{4B_0} \sum_{M= -l_0} ^i \sum_{m= 1} ^{N_M} (1-\chi_{\Tilde{B}_m ^{M}}) F^{(i)}_{M,t} (\mathcal{L}, T) \chi_{B_m ^{M}}f \right\|_{ L^2(G)}\nonumber\\
    & \lesssim 2^{i \varepsilon} (2^i t)^{Q(1/q- 1/2)}\, \|b\|_{BMO^{\varrho}(G)}\,2^{-2i\gamma \Bar{N}(1/p - 1/2)}\, t^{-Q(1/p -1/2)}\, 2^{2i s' (1- 1/p)}\, \|F\|_{L^2}\, \|\chi_{\widetilde{B}_0}f\|_{L^p(G)}\nonumber\\
    & \nonumber \lesssim 2^{i \varepsilon} (2^i t)^{Q(1/q- 1/2)}\, \|b\|_{BMO^{\varrho}(G)}\, 2^{-2i\gamma \Bar{N}(1/p - 1/2)}\, t^{-Q(1/p -1/2)}\, 2^{2i s' (1- 1/p)}\\
    & \hspace{7cm} \|F\|_{L^2}\, (2^i t)^{Q(1/p- 1/q)}\|\chi_{\widetilde{B}_0} f\|_{L^q(G)} \nonumber\\
    &\lesssim 2^{-\delta i}\, \|b\|_{BMO^{\varrho}(G)}\,\|F\|_{L^2}  \|\chi_{\widetilde{B}_0} f\|_{L^q(G)} \nonumber,
\end{align}
for some $\delta >0$ by choosing $\Bar{N}$ large enough.

It remains to estimate the first summand of (\ref{Decomposition of the operator in error and main part}).  Choose $s$ such that $q<s<2$. Then by \eqref{Inequality: BMO norm calculation} for $L^{\frac{sq}{s-q}}(G)$ and application of H\"older's inequality yields
\begin{align}\label{equation: break1}
   & \left\| \sum_{M= -l_0} ^i \sum_{m= 1} ^{N_M} \chi_{4B_0} (b - b_{B_0}) \chi_{\widetilde{B}_m ^{M}} F^{(i)}_{M,t} (\mathcal{L}, T) \chi_{B_m ^{M}}f \right\|_{L^q(G)} ^q \\
   &\nonumber \leq C (i + 1 + l_0)^{q-1} N_{\gamma}^{q-1} \sum_{M= -l_0} ^i \sum_{m= 1} ^{N_M} \left\| \chi_{4B_0} (b - b_{B_0}) \chi_{\widetilde{B}_m ^{M}} F^{(i)}_{M,t} (\mathcal{L}, T) \chi_{B_m ^{M}}f \right\|_{L^q(G)} ^q\\
   &\nonumber \leq C (2^i t)^{Q(1/q- 1/s)q}\, \|b\|_{BMO^{\varrho}(G)}^q (i + 1 + l_0)^{q-1} 2^{C \gamma j}  \sum_{M = -l_0} ^{i} \sum_{m=1} ^{N_M}  \|\chi_{\widetilde{B}_m ^{M}} F^{(i)}_{M,t} (\mathcal{L}, T) \chi_{B_m ^{M}}f\|_{L^s(G)} ^q\\
   &\nonumber \leq C (2^i t)^{Q(1/q- 1/s)q}\, \|b\|_{BMO^{\varrho}(G)}^q 2^{i \varepsilon} 2^{C \gamma i}  \sum_{M = -l_0} ^{i} \sum_{m=1} ^{N_M}  \bigl( 2^M t \,2^{\gamma i} \bigr)^{d_1(1/s -1/2)q}\, (2^it)^{2d_2(1/s - 1/2)q} \\
   &\nonumber\hspace{9cm} \, \| F^{(i)}_{M,t} (\mathcal{L}, T) \chi_{B_m ^{M}}f\|_{L^2(G)} ^q.
\end{align}
Now, using Proposition (\ref{Theorem: Truncated Restriction Estimate strong form}) in (\ref{equation: break1}), for $1\leq p \leq p_{d_2}$ (as $\min\{p_{d_1}, p_{d_2}\} = p_{d_2}$ for $d_1>d_2$), by choosing $q$ and $s$ very close, that is $1/q-1/s<\varepsilon$, we get
\begin{align}\label{equation: break2 process 1}
   & \left\|  \sum_{M= -l_0} ^i \sum_{m= 1} ^{N_M} \chi_{4B_0} (b - b_{B_0}) \chi_{\widetilde{B}_m ^{M}} F^{(i)}_{M,t} (\mathcal{L}, T) \chi_{B_m ^{M}}f \right\|_{L^q(G)} ^q\\  
   &\nonumber\lesssim (2^i t)^{Q(1/q- 1/s)q}\, \|b\|_{BMO^{\varrho}(G)}^q 2^{i \varepsilon} 2^{C \gamma i} \sum_{M = -l_0} ^{i} \sum_{m=1} ^{N_M}  \bigl( 2^M t \,2^{\gamma i} \bigr)^{d_1(1/s -1/2)q}\,(2^it)^{2d_2(1/s - 1/2)q}  \\
   &\nonumber \hspace{4cm} t^{-Q(1/p - 1/2)q} 2^{-Md_2(1/p- 1/2)q}\,  \|\psi F^{(i)}\|_{L^2} ^{q(1- \theta_p)}\, \|\psi F^{(i)}\|_{2^M, 2} ^{q \theta_p}\, \|\chi_{B_m ^{M}}f\|_{L^p(G)}^q \\
   &\nonumber\lesssim (2^i t)^{Q(1/q- 1/s)q} \|b\|_{BMO^{\varrho}(G)}^q 2^{i \varepsilon} 2^{C \gamma i} \sum_{M = -l_0} ^{i} \sum_{m=1} ^{M_l}  \bigl( 2^M t \,2^{\gamma i} \bigr)^{d_1(1/s -1/2)q}\,(2^it)^{2d_2(1/s - 1/2)q} t^{-Q(1/p - 1/2)q} \\
   &\nonumber \hspace{1cm}  2^{-Md_2(1/p- 1/2)q}\,  \|\psi F^{(i)}\|_{L^2} ^{q(1- \theta_p)}\, \|\psi F^{(i)}\|_{2^M, 2} ^{q \theta_p}\, (2^M t)^{d_1(1/p - 1/q)q}\, (2^i t)^{2d_2(1/p - 1/q)q}\,\|\chi_{B_m ^{M}}f\|_{L^q(G)} ^q\\
   & \nonumber\lesssim  2^{i \varepsilon }\,  2^{i 2d_2(1/p - 1/2)q} \,\|b\|_{BMO^{\varrho}(G)}^q \, 2^{C \gamma i} \, 2^{i\gamma d_1(1/s -1/2)q} \sum_{M = -l_0} ^{i} \|\psi F^{(i)}\|_{L^2} ^{q(1- \theta_p)}\, \|\psi F^{(i)}\|_{2^M, 2} ^{q \theta_p}\, \\
   & \nonumber\hspace{5cm} 2^{M d_1(1/p - 1/2)q}\, 2^{ -M d_2(1/p -  1/2)q} \|\chi_{\widetilde{B}_0}f\|_{L^q(G)} ^q, 
\end{align}
where $\theta_p = p_{d_2} ' (1-1/p)$ and $p_{d_2} '$ denotes the conjugate exponent of $p_{d_2}$.

Set $1/r = 1/p -1/2$ and  $\nu =\frac{1}{3} (\beta -d/r)$. Since we have $\|F^{(i)}\|_{L^2} \sim 2^{-\beta i} \|F^{(i)}\|_{L^2_{\beta}}= 2^{-3i\nu} 2^{-i d/r} \|F^{(i)}\|_{L^2_{\beta}}$, therefore using (\ref{equation: cowling-sikora norm relation}), for $\Tilde{s} > 1/2$, we have
\begin{align}\label{equation: discrete norm estimate process 1}
    \|\psi F^{(i)}\|_{L^2} ^{(1- \theta_p)}\, \|\psi F^{(i)}\|_{2^M, 2} ^{\theta_p}\, &\lesssim \| F^{(i)}\|_{L^2} ^{(1- \theta_p)} 2^{(i-M) \Tilde{s} \theta_p} \|F^{(i)}\|_{L^2} ^{\theta_p} \\
    &\nonumber \lesssim 2^{(i-M)\Tilde{s} \theta_p}\, 2^{-3i\nu} 2^{-i d/r} \|F\|_{L^2_{\beta}}.
\end{align}
Using this observation, and since $\varepsilon$ and $\gamma$ can be chosen arbitrary small, from (~\ref{equation: break2 process 1}) we get
\begin{align}\label{equation: main term process 1}
   & \left\|  \sum_{M= -l_0} ^i \sum_{m= 1} ^{N_M} \chi_{4B_0} (b - b_{B_0}) \chi_{\widetilde{B}_m ^{M}} F^{(i)}_{M,t} (\mathcal{L}, T) \chi_{B_m ^{M}}f \right\|_{L^q(G)} ^q\\  
   & \nonumber\lesssim 2^{i \varepsilon} 2^{i 2d_2q/r} \,\|b\|_{BMO^{\varrho}(G)}^q \, \sum_{M = -l_0} ^{i} 2^{(i-M)\Tilde{s}q \theta_p}\, 2^{-3i q\nu} 2^{-iq d/r} \|F\|_{L^2_{\beta}} ^q 2^{M d_1q/r}\, 2^{ -M d_2q/r} \|\chi_{\widetilde{B}_0}f\|_{L^q(G)} ^q \\
   & \nonumber \lesssim \, 2^{i \varepsilon} 2^{i 2d_2q/r} \|b\|_{BMO^{\varrho}(G)}^q 2^{-2iq \nu} \|F\|_{L^2_{\beta}} ^q\sum_{M = -l_0} ^{i} \bigl[2^{(M-i)(d_1 - d_2)}\, 2^{-(M-i)\Tilde{s} \theta_p r}\, 2^{(M-i)\nu r} \bigr]^{q/r} \\
   & \hspace{6cm} 2^{-M \nu q} 2^{i(d_1 - d_2)q/r}\, 2^{-idq/r}\, \|\chi_{\widetilde{B}_0}f\|_{L^q(G)} ^q \nonumber\\
   & \lesssim  2^{-2iq \nu} 2^{i \varepsilon}\, \|b\|_{BMO^{\varrho}(G)} ^q \|F\|_{L^2_{\beta}} ^q \sum_{M = -l_0} ^{i} \bigl[2^{(M-i)(d_1 - d_2 - \Tilde{s} \theta_p r + \nu r)} \bigr]^{q/r} \, \|\chi_{\widetilde{B}_0}f\|_{L^q(G)} ^q .\nonumber
\end{align}
Set $M-i=-\ell$. Choosing $\Tilde{s}$ sufficiently close to $1/2$, we see that the series 
\begin{align*}
    &  \sum_{\ell=0} ^{\infty} 2^{-\ell(d_1 - d_2 - \Tilde{s} \theta_p r + \nu r)q/r} 
\end{align*}
is convergent provided $d_1 - d_2 - \frac{r \theta_p}{2}   + \nu r >0$. Then for $1\leq p \leq 2$, the condition $d_1 - d_2 - \frac{r \theta_p}{2}   + \nu r >0$ is equivalent to 
\begin{align*}
   1\leq p \leq \frac{p_{d_2} ' + 2(d_1- d_2)}{p_{d_2} '+ d_1 - d_2}:=P_{d_1, d_2}.
\end{align*}
Now in order to find the required range of $p$ we make use of the Proposition \ref{prop: Radon-Hurwitz number} depending on the values of $(d_1, d_2)$. Therefore we make following two cases. 
\subsection*{Case (a) \texorpdfstring{$(d_1, d_2) \notin \{(4,3), (8,6), (8, 7)\}$}{}:}
By Proposition \ref{prop: Radon-Hurwitz number},  we have that $d_1 > 3d_2/2$. Which in turn implies that $ p_{d_2} \leq P_{d_1, d_2}$.
Therefore, choosing $\varepsilon>0$ very small we obtain that 
\begin{align}\label{equation: principal term}
 \left\|  \sum_{M= -l_0} ^i \sum_{m= 1} ^{M_l} \chi_{4B_0} (b - b_{B_0}) \chi_{\widetilde{B}_m ^{M}} F^{(i)}_{M,t} (\mathcal{L}, T) \chi_{B_m ^{M}}f \right\|_{L^q(G)} ^q   &\lesssim C 2^{-iq \nu}  \|F\|_{L^2_{\beta}}^q \|b\|_{BMO^{\varrho}(G)} ^q  \, \|\chi_{\widetilde{B}_0}f\|_{L^q(G)} ^q . 
\end{align}

Hence, aggregating the estimates obtained in (\ref{equation: estimate of remainder term of S_{11}}), (\ref{equation: estimate of negligable part of S_{11}}) and (\ref{equation: principal term}), we conclude that
\begin{align*}
     \left\|  \sum_{M= -l_0} ^i \sum_{m= 1} ^{N_M} \chi_{4B_0} (b - b_{B_0}) \chi_{\widetilde{B}_m ^{M}} F^{(i)}_{M,t} (\mathcal{L}, T) \chi_{B_m ^{M}}f \right\|_{L^q(G)} & \lesssim 2^{-i \delta} \|b\|_{BMO^{\varrho}(G)} \, \| F\|_{L^2_{\beta}}\,\|\chi_{\widetilde{B}_0}f\|_{L^q(G)} ,
\end{align*}
for some $\delta>0$.

\subsection*{Case (b) \texorpdfstring{$(d_1, d_2) \in \{(4,3), (8,6), (8, 7)\}$}{}:}
Note that $P_{4,3} =6/5$, $P_{8,6} = 17/12$, $P_{8,7} = 14/11$ and $p_{3} = 4/3$, $p_{6}= 14/9$, $p_{7}=8/5$. Therefore from (\ref{equation: main term process 1}) we conclude the following result.

For $(d_1, d_2) = (4,3)$, we have 
\begin{align}\label{equation: estimate for S11 at 4,3 process 1}
     \left\|  \sum_{M= -l_0} ^i \sum_{m= 1} ^{N_M} \chi_{4B_0} (b - b_{B_0}) \chi_{\widetilde{B}_m ^{M}} F^{(i)}_{M,t} (\mathcal{L}, T) \chi_{B_m ^{M}}f \right\|_{L^q(G)} & \lesssim 2^{-i \delta} \|b\|_{BMO^{\varrho}(G)} \, \| F\|_{L^2_{\beta}}\,\|\chi_{\widetilde{B}_0}f\|_{L^q(G)},
\end{align}
for some $\delta>0$, whenever $1\leq p \leq 6/5$.

For $(d_1, d_2) = (8,6)$, we have 
\begin{align*}
     \left\|  \sum_{M= -l_0} ^i \sum_{m= 1} ^{N_M} \chi_{4B_0} (b - b_{B_0}) \chi_{\widetilde{B}_m ^{M}} F^{(i)}_{M,t} (\mathcal{L}, T) \chi_{B_m ^{M}}f \right\|_{L^q(G)} & \lesssim 2^{-i \delta} \|b\|_{BMO^{\varrho}(G)} \, \| F\|_{L^2_{\beta}}\,\|\chi_{\widetilde{B}_0}f\|_{L^q(G)},
\end{align*}
for some $\delta>0$, whenever $1\leq p \leq 17/12$.

And for $(d_1, d_2) = (8,7)$, we have 
\begin{align*}
     \left\|  \sum_{M= -l_0} ^i \sum_{m= 1} ^{N_M} \chi_{4B_0} (b - b_{B_0}) \chi_{\widetilde{B}_m ^{M}} F^{(i)}_{M,t} (\mathcal{L}, T) \chi_{B_m ^{M}}f \right\|_{L^q(G)} & \lesssim 2^{-i \delta} \|b\|_{BMO^{\varrho}(G)} \, \| F\|_{L^2_{\beta}}\,\|\chi_{\widetilde{B}_0}f\|_{L^q(G)},
\end{align*}
for some $\delta>0$, whenever $1\leq p \leq 14/11$.

Now let us move our attention to the second summand of the right hand side of (\ref{equation: decomposition of S_1}). To this end, we first estimate the term 
\begin{align*}
 \|\chi_{4 B_0}\, (b - b_{B_0})\, ((1-\psi) F^{(i)}) (t\sqrt{\mathcal{L}}) \chi_{\widetilde{B}_0} f\|_{L^q(G)}. 
\end{align*}
We further decompose the function $1 -\psi$ as follows. Since $1-\psi$ is supported outside the interval $(1/8,2)$, we can choose a function $\phi\in C_c ^{\infty} (2,8)$ such that
\begin{align*}
    1 - \psi(\xi) = \sum_{\iota \geq 0} \phi(2^{-\iota} \xi) + \sum_{\iota \leq -6}  \phi(2^{-\iota} \xi) =: \sum_{\iota \geq 0} \phi_{\iota} (\xi) + \sum_{\iota \leq -6} \phi_{\iota} (\xi), \quad \text{for all}\  \xi >0 .
\end{align*}
Therefore, by spectral theorem we can write
    \begin{equation}
        ((1 - \psi) F^{(i)})(t\sqrt{\mathcal{L}}) =\sum_{\iota \geq 0} (\phi_{\iota}  F^{(i)}) (t\sqrt{\mathcal{L}}) + \sum_{\iota \leq -6} (\phi_{\iota}  F^{(i)}) (t\sqrt{\mathcal{L}})\nonumber.
\end{equation} 
Again, as in \eqref{equation: cutoff in the T variable}, define the function $F^{(i,\iota)}_{M,t}$ as
\begin{align*}
    F^{(i,\iota)}_{M,t} (\kappa, \tau) &= (\phi_{\iota} F^{(i)}) (t\sqrt{\kappa}) \,\Theta( 2^M \tau)  \quad \text{for} \quad \kappa \geq 0,
\end{align*}
and $F^{(i,\iota)}_{M,t} (\kappa, \tau) = 0$ else. Then similar to \eqref{First decomposition of multiplier in theta} allows us to write
\begin{align}\label{equation: decomposition of S_{12}}
    & \chi_{4B_0}\, (b - b_{B_0})\, ((1-\psi) F^{(i)})(t\sqrt{\mathcal{L}}) \chi_{\widetilde{B}_0} f \\
    &\nonumber=  \chi_{4 B_0}\, (b - b_{B_0})\, \Bigl(\sum_{\iota \geq 0} + \sum_{\iota \leq -6}\Bigr) \, \left( \sum_{M=-l_0 } ^i + \sum_{M=i+ 1 } ^{\infty}\right) F^{(i,\iota)}_{M,t} (\mathcal{L}, T) \chi_{\widetilde{B}_0} f \\
    &\nonumber=:\Bigl(\sum_{\iota \geq 0} + \sum_{\iota \leq -6}\Bigr) g_{j, \leq i} ^{\iota} + \Bigl(\sum_{\iota \geq 0} + \sum_{\iota \leq -6}\Bigr)g_{j, > i} ^{\iota} .
\end{align}
Notice that as $\supp{F} \subseteq [-1,1]$, $\supp{\phi} \subseteq [2,8] $ and $\check{\eta}$ is Schwartz class function, for any $\Bar{N}>0$ we have
\begin{align}
\label{Inequality: Calculation with cutoff phi}
 \|\phi_{\iota} F^{(i)}\|_{L^{\infty}} = 2^i \|\phi_{\iota} (F*\delta_{2^i} \check{\eta}) \|_{L^{\infty}} \leq C 2^{-\Bar{N}(i + \max{\{\iota,0}\})} \| F \|_{L^2} .
\end{align}
For the second term of (\ref{equation: decomposition of S_{12}}), applying Corollary \ref{Inequality: Truncated restriction equation weak form I} and using \eqref{discrete norm is dominated by sup norm} and \eqref{Inequality: Calculation with cutoff phi}, we get
\begin{align*}
    & \left\|\chi_{4 B_0} \Bigl(\sum_{\iota \geq 0} + \sum_{\iota \leq -6}\Bigr) \,  \sum_{M=i+ 1 } ^{\infty} F^{(i,\iota)}_{M,t} (\mathcal{L}, T) \chi_{\widetilde{B}_0} f  \right\|_{L^2} \\
    & \leq C \Bigl(\sum_{\iota \geq 0} + \sum_{\iota \leq -6}\Bigr) (2^{\iota} t^{-1})^{Q(1/p - 1/2)}\, 2^{-i d_2 (1/p - 1/2)}\, \|\delta_{2^{\iota+3} t^{-1}}(\phi_{\iota} F^{(i)})(t\cdot)\|_{2^M,2} \, \|\chi_{\widetilde{B}_0} f\|_{L^p} \\
    & \leq C \Bigl(\sum_{\iota \geq 0} + \sum_{\iota \leq -6}\Bigr) (2^{\iota} t^{-1})^{Q(1/p - 1/2)}\, 2^{-i d_2 (1/p - 1/2)}\, \|\phi_{\iota} F^{(i)}\|_{L^{\infty}} \, \|\chi_{\widetilde{B}_0} f\|_{L^p}\\
    & \leq C \Bigl(\sum_{\iota \geq 0} + \sum_{\iota \leq -6}\Bigr) (2^{\iota} t^{-1})^{Q(1/p - 1/2)}\, 2^{-i d_2 (1/p - 1/2)}\, 2^{-\Bar{N}(i + \max{\{\iota,0}\})} \| F \|_{2} \, \|\chi_{\widetilde{B}_0} f\|_{L^p} \\
    & \leq C t^{-Q(1/p - 1/2)}\, 2^{-i d_2 (1/p - 1/2)}\, 2^{-\Bar{N} i}\|F\|_{L^{2}} \, \|\chi_{\widetilde{B}_0} f\|_{L^p} .
\end{align*}
This estimate together with H\"older's inequality and \eqref{Inequality: BMO norm calculation}, we get 
\begin{align}\label{equation: estimate for remainder term of S_{12}}
   & \left\|\Bigl(\sum_{\iota \geq 0} + \sum_{\iota \leq -6}\Bigr)g_{j, > i} ^{\iota} \right\|_{L^q}\\
   & \nonumber \leq  \|\chi_{4B_0}\, (b - b_{B_0})\|_{L^{\frac{2q}{2-q}}}  \left\| \chi_{4 B_0}\, \, \Bigl(\sum_{\iota \geq 0} + \sum_{\iota \leq -6}\Bigr) \,  \sum_{M=i+ 1 } ^{\infty} F^{(i,\iota)}_{M,t} (\mathcal{L}, T) \chi_{\widetilde{B}_0} f  \right\|_{L^2} \\
   & \nonumber \leq C \|b\|_{BMO^{\varrho}(G)} (2^i t) ^{Q(1/q - 1/2)}\,  t^{-Q(1/p - 1/2)}\, 2^{-i d_2 (1/p - 1/2)}\, 2^{-\Bar{N} i} \|F\|_{L^2} \, \|\chi_{\widetilde{B}_0} f\|_{L^p} \\
   & \nonumber \leq C \|b\|_{BMO^{\varrho}(G)} (2^i t) ^{Q(1/q - 1/2)}\,  t^{-Q(1/p - 1/2)}\, 2^{-i d_2 (1/p - 1/2)}\, 2^{-\Bar{N} i} \|F\|_{L^2} (2^i t)^{Q(1/p- 1/q)} \|\chi_{\widetilde{B}_0} f\|_{L^q}\\
   & \nonumber \leq C 2^{i d(1/p - 1/2)}\, 2^{-\Bar{N} i} \|b\|_{BMO^{\varrho}(G)}  \|F\|_{L^2} \, \|\chi_{\widetilde{B}_0} f\|_{L^q}\\
   & \nonumber \leq C 2^{-i\delta} \|b\|_{BMO^{\varrho}(G)} \|F\|_{L^2} \, \|\chi_{\widetilde{B}_0} f\|_{L^q},
\end{align}
for some $\delta>0$, by choosing $\Bar{N}$ large enough.

Now, for the first term $\Bigl(\sum_{\iota \geq 0} + \sum_{\iota \leq -6}\Bigr) g_{j, \leq i} ^{\iota}$ of (\ref{equation: decomposition of S_{12}}) we use the decomposition carried out for the case $l\leq i$ in \eqref{Equation: Decomposition of ball into smaller balls}. Therefore we can write
\begin{align}\label{equation: finite propagation speed part}
    & \chi_{4 B_0}\, (b - b_{B_0})\, \sum_{M= -l_0} ^i \Bigl( \sum_{\iota \geq 0} + \sum_{\iota \leq -6}\Bigr) F^{(i, \iota)}_{M,t} (\mathcal{L}, T) \chi_{\Tilde{B}_0}f \\
    &\nonumber= \chi_{4 B_0}\, (b - b_{B_0})\, \Bigl( \sum_{\iota \geq 0} + \sum_{\iota \leq -6}\Bigr) \sum_{M= -l_0} ^i \sum_{m= 1}^{N_M} \bigl( \chi_{\Tilde{B}_m ^{M}} + (1-\chi_{\Tilde{B}_m ^{M}}) \bigr)  F^{(i, \iota)}_{M,t} (\mathcal{L}, T)\chi_{B_m ^{M}}f .
\end{align}
Let $\mathcal{K}^{(i, \,\iota)}_{M,t}$ be the convolution kernel of $ F^{(i, \iota)}_{M,t} (\mathcal{L}, T)$. Then as estimated in \eqref{equation: pre L1-L1 estimate}, an application of Minkowski's inequality together with Proposition \ref{Theorem : First layer weighted Plancherel} yields that
\begin{align*}
 & \left\|\sum_{M= -l_0} ^i \sum_{m= 1} ^{N_M}   (1-\chi_{\Tilde{B}_m ^{M}}) \,  F^{(i, \iota)}_{M,t} (\mathcal{L}, T)\chi_{B_m ^{M}}f \right\|_{ L^2(G)}\\
 & \leq \sum_{M= -l_0} ^i \sum_{m= 1} ^{N_M} \int_{B_m ^{M}} |f(x', u')|\Bigl(\int_{G}\, |\mathcal{K}^{(i, \iota)} _{M,t} \bigl((x', u')^{-1}(x,u)\bigr)|^2\, d(x,u) \Bigr)^{\frac{1}{2}}\, d(x',u')\\
  & \leq C \sum_{M= -l_0} ^i \sum_{m= 1} ^{N_M}  \int_{B_m^{M}} |f(x', u')|\, (2^{\iota}t^{-1})^{Q/2}\, 2^{-Md_2/2}\, \| \delta_{2^{\iota +3}t^{-1}} (\phi_{\iota} F^{(i)})(t\cdot)\|_{L^2} \, d(x',u')\\
   & \leq C \sum_{M= -l_0} ^i \sum_{m= 1} ^{N_M} \| \phi_{\iota} F^{(i)} \|_{L^{\infty}} \,(2^{\iota}t^{-1})^{Q/2}\,2^{-Md_2/2}\, \int_{B_m ^{M}} |f(x', u')| \, d(x',u')\\
   & \leq C 2^{i \varepsilon} \| \phi_{\iota} F^{(i)} \|_{L^{\infty}} \, (2^{\iota}t^{-1})^{Q/2}\,\|\chi_{\Tilde{B}_0}f\|_{L^1(G)}.
\end{align*}
Moreover, similar to \eqref{equation: L2-L2} using Plancherel estimates and upper bound of $N_M$, 
\begin{align*}
    \left\|\sum_{M= -l_0} ^i \sum_{m= 1} ^{N_M} (1-\chi_{\Tilde{B}_m ^{M}}) \,  F^{(i, \iota)} _{M,t} (\mathcal{L}, T)\chi_{B_m ^{M}}f \right\|_{ L^2(G)} & \leq C \| \phi_{\iota} F^{(i)}\|_{L^{\infty}} 2^{i\varepsilon}  2^{id_1/2}\, \|\chi_{\Tilde{B}_0}f\|_{L^2(G)}.
\end{align*}
Then, by Riesz-Thorin interpolation theorem, for $1\leq p \leq 2$ we have
\begin{align*}
    & \left\|\sum_{M= -l_0} ^i \sum_{m= 1} ^{N_M} (1-\chi_{\Tilde{B}_m ^{M}}) \,  F^{(i, \iota)} _{M,t} (\mathcal{L}, T)\chi_{B_m ^{M}}f \right\|_{ L^2(G)} \\
    & \leq C  (2^{\iota}t^{-1})^{Q(1/p -1/2)}\, 2^{i d_1(1- 1/p)}\, 2^{i \varepsilon} \| \phi_{\iota} F^{(i)}\|_{L^{\infty}}\, \|\chi_{\Tilde{B}_0}f\|_{L^p(G)}.
\end{align*}

Using this estimate together with \eqref{Inequality: BMO norm calculation}, H\"older's inequality, (\ref{Inequality: Calculation with cutoff phi}) and again H\"older's inequality, we get
\begin{align}\label{equation: estimate for neglegible part of S_{12}}
    & \left\|\chi_{4 B_0}\, (b - b_{B_0}) \Bigl( \sum_{\iota \geq 0} + \sum_{\iota \leq -6}\Bigr) \sum_{M= -l_0} ^i \sum_{m= 1} ^{N_M} (1-\chi_{\Tilde{B}_m ^{M}}) \,  F^{(i, \iota)} _{M,t} (\mathcal{L}, T)\chi_{B_m ^{M}}f \right\|_{ L^q(G)}\\
    & \leq C (2^i t)^{Q(1/q- 1/2)}\,\|b\|_{BMO^{\varrho}(G)} \Bigl( \sum_{\iota \geq 0} + \sum_{\iota \leq -6}\Bigr) \left\|\sum_{M= -l_0}^i \sum_{m= 1}^{N_M}   (1-\chi_{\Tilde{B}_m ^{M}}) \,  F^{(i, \iota)}_{M,t} (\mathcal{L}, T)\chi_{B_m ^{M}}f \right\|_{ L^2(G)}\nonumber\\
    & \leq C (2^i t)^{Q(1/q- 1/2)}\, \|b\|_{BMO^{\varrho}(G)}  \Bigl( \sum_{\iota \geq 0} + \sum_{\iota \leq -6}\Bigr) (2^{\iota}t^{-1})^{Q(1/p -1/2)}\, 2^{i d_1(1- 1/p)}\, 2^{i \varepsilon} \nonumber \\
    &\hspace{6cm} 2^{-\Bar{N}(i + \max{\{\iota,0}\})} \| F \|_{L^2}\, (2^i t)^{Q(1/p- 1/q)}\|\chi_{\Tilde{B}_0} f\|_{L^q(G)}\nonumber\\
    & \leq C 2^{iQ(1/p- 1/2)}\, \|b\|_{BMO^{\varrho}(G)}  \,2^{i d_1(1- 1/p)}\, 2^{-\Bar{N}i}  \| F \|_{L^2}\, \|\chi_{\Tilde{B}_0} f\|_{L^q(G)}\nonumber\\
    &\leq C 2^{-\delta i}\, \|b\|_{BMO^{\varrho}(G)}\, \|F\|_{L^2} \|\chi_{\Tilde{B}_0} f\|_{L^q(G)}\nonumber,
\end{align}
for some $\delta >0$, by choosing $\Bar{N}$ large enough.

We are left with the estimation of
\begin{align}\label{equation: main part of S_{12}}
    \chi_{4 B_0}\, (b - b_{B_0})\, \Bigl( \sum_{\iota \geq 0} + \sum_{\iota \leq -6}\Bigr) \sum_{M= -l_0} ^i \sum_{m= 1}^{N_M} \chi_{\Tilde{B}_m ^{M}}  F^{(i, \iota)}_{M,t} (\mathcal{L}, T)\chi_{B_m ^{M}}f .
\end{align}
Choose $s$ such that $q<s<2$. Then using H\"older's inequality and \eqref{Inequality: BMO norm calculation}, we see that
\begin{align*}
    &\nonumber \left\| \chi_{4B_0}  (b - b_{B_0}) \sum_{M = -l_0} ^{i} \sum_{m=1} ^{N_M} \Bigl( \sum_{\iota \geq 0} + \sum_{\iota \leq -6}\Bigr) \chi_{\Tilde{B}_m ^{M}}  F^{(i, \iota)}_{M,t} (\mathcal{L}, T)\chi_{B_m ^{M}}f \right\|_{L^q(G)} ^q\\
    &\nonumber\leq C (2^i t)^{Q(1/q- 1/s)q}\, \|b\|_{BMO^{\varrho}(G)}^q (i+1-l_0)^{q-1} N_{\gamma}^{q-1} \\
    &\nonumber \hspace{5cm} \sum_{M = -l_0} ^{i} \sum_{m=1} ^{N_M} \left\|\Bigl( \sum_{\iota \geq 0} + \sum_{\iota \leq -6}\Bigr) \chi_{\Tilde{B}_m ^{M}}  F^{(i, \iota)}_{M,t} (\mathcal{L}, T)\chi_{B_m ^{M}}f \right\|_{L^s(G)} ^q .
\end{align*}
Now applying H\"older's inequality, Corollary \ref{Inequality: Truncated restriction equation weak form I}, the fact \eqref{discrete norm is dominated by sup norm} and \eqref{Inequality: Calculation with cutoff phi} we get
\begin{align*}
   & \left\|\Bigl( \sum_{\iota \geq 0} + \sum_{\iota \leq -6}\Bigr) \chi_{\Tilde{B}_m ^{M}}  F^{(i, \iota)}_{M,t} (\mathcal{L}, T)\chi_{B_m ^{M}}f \right\|_{L^s(G)} \\
   &\nonumber\leq C \Bigl( \sum_{\iota \geq 0} + \sum_{\iota \leq -6}\Bigr)  \bigl( 2^l t 2^{i \gamma} \bigr)^{d_1(1/s -1/2)} (2^i t)^{2d_2(1/s - 1/2)}\, \| F^{(i, \iota)} _{M,t} (\mathcal{L}, T) \chi_{B_m ^{M}}f\|_{L^2(G)} \\
   &\nonumber\leq C \Bigl( \sum_{\iota \geq 0} + \sum_{\iota \leq -6}\Bigr) \bigl( 2^l t 2^{i\gamma} \bigr)^{d_1(1/s -1/2)} (2^it)^{2d_2(1/s - 1/2)} (2^{\iota} t^{-1})^{Q(1/p - 1/2)}\, 2^{-Md_2(1/p- 1/2)}\,  \\
   & \nonumber \hspace{10cm} \|\phi_{\iota} F^{(i)}\|_{2^M, 2} \,\|\chi_{B_m ^{M}}f\|_{L^p(G)} \\
   &\nonumber\leq C \Bigl( \sum_{\iota \geq 0} + \sum_{\iota \leq -6}\Bigr) \bigl( 2^l t 2^{i\gamma} \bigr)^{d_1(1/s -1/2)} (2^it)^{2d_2(1/s - 1/2)} (2^{\iota} t^{-1})^{Q(1/p - 1/2)}\, 2^{-ld_2(1/p- 1/2)} \\
   &\nonumber \hspace{4cm} \, 2^{-\Bar{N}(i + \max{\{\iota,0}\})} \| F \|_{L^2} (2^M t)^{d_1(1/p - 1/q)}\, (2^i t)^{2d_2(1/p - 1/q)}\,\|\chi_{B_m ^{M}}f\|_{L^q(G)} \\
   & \nonumber \leq C t^{Q(1/s-1/q)} 2^{i \varepsilon} 2^{i \gamma d_1(1/s-1/2)} 2^{Md_1(1/p- 1/2+1/s-1/q)} 2^{-Md_2(1/p- 1/2)} \, 2^{-\Bar{N}i} 2^{i2d_2(1/p - 1/2+1/s-1/q)} \\
   & \hspace{11cm} \| F \|_{L^2} \|\chi_{\widetilde{B}_0} f\|_{L^q(G)} .
\end{align*}
Let us set $l- i = -\ell$ and recall that we have $d_1>d_2$ on M\'etivier groups. Therefore, from the previous two estimates by choosing $\gamma> 0$ sufficiently small and $s$ arbitrary close to $q$,
\begin{align}\label{equation: estimate for main term of S_{12}}
   & \left\| \chi_{4B_0}  (b - b_{B_0}) \sum_{M = -l_0} ^{i} \sum_{m=1} ^{M_l} \Bigl( \sum_{\iota \geq 0} + \sum_{\iota \leq -6}\Bigr) \chi_{\Tilde{B}_m ^{M}}  F^{(i, \iota)}_{M,t} (\mathcal{L}, T)\chi_{B_m ^{M}}f \right\|_{L^q(G)} ^q\\
   & \nonumber \leq C \|b\|_{BMO^{\varrho}(G)}^q 2^{i \varepsilon}  \| F \|_{L^2} ^q \sum_{M= -l_0} ^{i} 2^{(M-i)(d_1-d_2)(1/p- 1/2)q} \, 2^{i(d_1-d_2)(1/p- 1/2)q} \\
   &\nonumber \hspace{8cm} 2^{-\Bar{N} qi} 2^{i2d_2(1/p - 1/2)q}\, \|\chi_{\Tilde{B}_0} f\|_{L^q(G)} ^q\\
   & \nonumber \leq C 2^{i d (1/p -1/2)q}   \|b\|_{BMO^{\varrho}(G)}^q (i + 1 + l_0)^{q-1}  \| F \|_{L^2} ^q \sum_{\ell=0} ^{\infty} 2^{-\ell (d_1-d_2)(1/p- 1/2)q}  \, 2^{-\Bar{N} qi} \, \|\chi_{\widetilde{B}_0} f\|_{L^q(G)} ^q\\
    &\leq C 2^{-\delta iq}\, \|b\|_{BMO^{\varrho}(G)} ^q\, \|F\|_{L^2} ^q \|\chi_{\widetilde{B}_0} f\|_{L^q(G)} ^q \nonumber,
\end{align}
for some $\delta>0$, by choosing $\Bar{N}$ sufficiently large.

Finally, aggregating the estimates obtained in (\ref{equation: estimate for remainder term of S_{12}}), (\ref{equation: estimate for neglegible part of S_{12}}) and (\ref{equation: estimate for main term of S_{12}}), we get
\begin{align*}
     \| \chi_{4 B_0}\, (b - b_{B_0})\,((1-\psi) F^{(i)} (t\sqrt{\mathcal{L}})) \chi_{\widetilde{B}_0} f \|_{L^q(G)} \lesssim 2^{- \delta i} \|b\|_{BMO^{\varrho}(G)} \, \|F\|_{L^2} \,\|\chi_{\widetilde{B}_0}f\|_{L^q(G)},
\end{align*}
This completes the proof of (\ref{equation: red1}).
\end{proof}
Using similar idea as of proof of \eqref{equation: red1} with obvious modification, one can also prove (\ref{equation: red2}) and therefore we omit the details.

This completes the proof of Theorem \ref{Theorem: Multiplier for Commutator} except for the point $(d_1, d_2)=(4,3)$. Note that at $(4,3)$, from \eqref{equation: estimate for S11 at 4,3 process 1} we only get the range $1\leq p \leq 6/5$. Therefore in order to get the full range of $p$ at $(4,3)$, that is for $1\leq p \leq 4/3$, one needs further analysis at that particular point, which we are going to explain now.

\subsection*{Improvement at \texorpdfstring{$(d_1, d_2)=(4,3)$}{}}

Here we will use ideas from \cite[Section 8]{Niedorf_Metivier_group_2023}. As the proof at this point is nearly same as the Theorem \eqref{Theorem: Multiplier for Commutator} for other points, so we will be brief. Following the argument of Theorem \eqref{Theorem: Multiplier for Commutator}, one can see that restriction on the range of $p$ comes from the estimate \eqref{equation: main term process 1}, which is deduced from the estimate \eqref{equation: break1}. Therefore from \eqref{equation: break1} we have
\begin{align}
\label{For exception point from Lq to Ls}
    & \left\| \sum_{M= -l_0} ^i \sum_{m= 1} ^{N_M} \chi_{4B_0} (b - b_{B_0}) \chi_{\widetilde{B}_m ^{M}} F^{(i)}_{M,t} (\mathcal{L}, T) \chi_{B_m ^{M}}f \right\|_{L^q(G)} ^q \\
    &\nonumber \leq C (2^i t)^{Q(1/q- 1/s)q}\, \|b\|_{BMO^{\varrho}(G)}^q 2^{i \varepsilon} 2^{C \gamma j}  \sum_{M = -l_0} ^{i} \sum_{m=1} ^{N_M}  \|\chi_{\widetilde{B}_m ^{M}} F^{(i)}_{M,t} (\mathcal{L}, T) \chi_{B_m ^{M}}f\|_{L^s(G)}^q .
\end{align}
Now similarly as in the \cite[Page 32]{Niedorf_Metivier_group_2023}, first using the left invariance of $F^{(i)}_{M,t} (\mathcal{L}, T)$ we get $\|\chi_{\widetilde{B}_m ^{M}} F^{(i)}_{M,t} (\mathcal{L}, T) \chi_{B_m ^{M}}f\|_{L^s(G)} = \|\chi_{\widetilde{A}^{M}} F^{(i)}_{M,t} (\mathcal{L}, T) \chi_{A^{M}}f \|_{L^s(G)}$ and then decomposing $\chi_{A^{M}}$ in the second layer, the above quantity inside the $L^s$ norm we can write
\begin{align}
\label{Further decomposition into second layer}
    & \chi_{\widetilde{A}^{M}} F^{(i)}_{M,t} (\mathcal{L}, T) \chi_{A^{M}}f \\
    &\nonumber = \sum_{k=1}^{K_M} \chi_{\widetilde{A}_k^{M}} \chi_{\widetilde{A}^{M}} F^{(i)}_{M,t} (\mathcal{L}, T) \chi_{A_k^{M}}f + \sum_{k=1}^{K_M} (1-\chi_{\widetilde{A}_k^{M}}) \chi_{\widetilde{A}^{M}} F^{(i)}_{M,t} (\mathcal{L}, T) \chi_{A_k^{M}}f ,
\end{align}
where
\begin{align*}
    A^{M} \subseteq B^{|\cdot|} (0, C \tfrac{2^M t}{10}) \times B^{|\cdot|}(0, C \tfrac{2^{2i} t^2}{100}), \quad \quad \widetilde{A}^{M} :=  B^{|\cdot|}(0, 2C\, \tfrac{2^M t}{10}\, 2^{i\gamma}) \times B^{|\cdot|}(0, 4 C \tfrac{2^{2i} t^2}{100} 2^{i \gamma}) 
\end{align*}
and
\begin{align*}
    \widetilde{A}_k^{M} := \{(x,u) \in \widetilde{A}^{M} : \langle u-u_k^M , v \rangle < 4 \Bar{C} \tfrac{2^M 2^{i} t^2}{100} 2^{i \gamma} \}
\end{align*}
with $v \in \mathfrak{g}_2$ and $|v|=1$ as in the \cite[Proposition 8.2]{Niedorf_Metivier_group_2023}.

Then from \eqref{For exception point from Lq to Ls} we have
\begin{align}
\label{for exceptional case Lq to Ls decomposition}
    & \left\| \sum_{M= -l_0} ^i \sum_{m= 1} ^{N_M} \chi_{4B_0} (b - b_{B_0}) \chi_{\widetilde{B}_m ^{M}} F^{(i)}_{M,t} (\mathcal{L}, T) \chi_{B_m ^{M}}f \right\|_{L^q(G)} ^q \\
    &\nonumber \leq C (2^i t)^{Q(1/q- 1/s)q}\, \|b\|_{BMO^{\varrho}(G)}^q 2^{i \varepsilon} 2^{C \gamma j}  \sum_{M = -l_0} ^{i} \sum_{m=1} ^{N_M}  \Biggl[\left\|\sum_{k=1}^{K_M} \chi_{\widetilde{A}_k^{M}} \chi_{\widetilde{A}^{M}} F^{(i)}_{M,t} (\mathcal{L}, T) \chi_{A_k^{M}}f \right\|_{L^s(G)}^q \\
    &\nonumber \hspace{5cm} + \left\|\sum_{k=1}^{K_M} (1-\chi_{\widetilde{A}_k^{M}}) \chi_{\widetilde{A}^{M}} F^{(i)}_{M,t} (\mathcal{L}, T) \chi_{A_k^{M}}f \right\|_{L^s(G)}^q \Biggr] .
\end{align}
Regarding the second term inside the summation in \eqref{for exceptional case Lq to Ls decomposition}, using H\"older's inequality,
\begin{align*}
    & \left\|\sum_{k=1}^{K_M} (1-\chi_{\widetilde{A}_k^{M}}) \chi_{\widetilde{A}^{M}} F^{(i)}_{M,t} (\mathcal{L}, T) \chi_{A_k^{M}}f \right\|_{L^s(G)} \\
    &\leq \sum_{k=1}^{K_M} \bigl( 2^M t \,2^{\gamma i} \bigr)^{d_1(1/s -1/2)}\, (2^it)^{2d_2(1/s - 1/2)} \|(1-\chi_{\widetilde{A}_k^{M}}) \chi_{\widetilde{A}^{M}} F^{(i)}_{M,t} (\mathcal{L}, T) \chi_{A_k^{M}}f \|_{L^2(G)} .
\end{align*}
To estimate this we need the following lemma. Repeating the same proof as Proposition \eqref{Theorem : First layer weighted Plancherel} with the help of \cite[Proposition 8.2]{Niedorf_Metivier_group_2023} we get the following lemma.
\begin{lemma}
\label{second layer weighted Plancherel for exceptional}
Let $\mathcal{K}_{F_{M,R}(\mathcal{L}, T)}$ denote the convolution kernel of $F_{M,R}(\mathcal{L}, T)$. Then for any $\alpha \geq 0$ we have 
\begin{align*}
    \int_G \left| |x|^{\alpha} \mathcal{K}_{F_{M,R}(\mathcal{L}, T)} (x, u) \right|^2 \, d(x, u) &\leq C\ 2^{M(2\alpha -d_2)}\,R^{Q- 4\alpha} \|\delta_R F\|_{L^2} ^2.
\end{align*}
\end{lemma}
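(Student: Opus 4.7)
The plan is to mirror the proof of Proposition \ref{Theorem : First layer weighted Plancherel} verbatim, only swapping the underlying unit-scale weighted Plancherel estimate \cite[Proposition 8.1]{Niedorf_Metivier_group_2023} for its second-layer analog \cite[Proposition 8.2]{Niedorf_Metivier_group_2023}. The exponent $R^{Q-4\alpha}$ (as opposed to $R^{Q-2\alpha}$ in the first-layer estimate) is the signature that the relevant weight in the unit-scale statement scales like a second-layer coordinate, which is homogeneous of degree $2$ under $\delta_R$, rather than like a first-layer coordinate which is homogeneous of degree $1$. So the $R$-exponent shift by a factor of $2$ is exactly what the change of variables will produce.

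Concretely, I would first record the operator identity
\begin{align*}
    F(R\sqrt{\mathcal{L}})\,\Theta(R^2\, 2^M T) \;=\; \delta_R \circ F_{M,R}(\mathcal{L}, T) \circ \delta_{R^{-1}}
\end{align*}
(exactly as derived in \eqref{Relation between dilated and non dilated joint functional calcu}), and read off the corresponding rescaling law for convolution kernels. Second, in the integral
$\int_G\, |\,w(x,u)\,\mathcal{K}_{F(R\sqrt{\mathcal{L}})\,\Theta(R^2 2^M T)}(x,u)\,|^2\, d(x,u)$, perform the change of variables $(x,u)\mapsto \delta_R(x,u)=(Rx, R^2u)$. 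The Jacobian contributes $R^Q$, while the weight $w$, being homogeneous of degree $2$ under $\delta_R$, picks up a factor $R^{2}$ inside the square (hence $R^{4\alpha}$ after raising to the power $\alpha$ and squaring). Rearranging the identity obtained from this substitution, together with the kernel expression from Proposition \ref{Prop: Kernel expression} and the homogeneity relations \eqref{Relation of dilated non dilated eigen values}--\eqref{Relation dilated nondilated eigen function}, yields
\begin{align*}
    \int_G\, |\,w\,\mathcal{K}_{F(R\sqrt{\mathcal{L}})\Theta(R^2 2^M T)}(x,u)\,|^2\, d(x,u) \;=\; R^{-Q+4\alpha}\, \int_G\, |\,w\,\mathcal{K}_{F_{M,R}(\mathcal{L}, T)}(x,u)\,|^2\, d(x,u).
\end{align*}

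Third, apply \cite[Proposition 8.2]{Niedorf_Metivier_group_2023} directly to the left-hand side: since $\delta_R F$ is supported in $[1/8, 8]$ and $\delta_{R^2}\Theta$ is supported in $[-2,-1/2]\cup[1/2,2]$, that proposition bounds the weighted $L^2$ norm of $\mathcal{K}_{F(R\sqrt{\mathcal{L}})\Theta(R^2 2^M T)}$ by $C\, 2^{M(2\alpha - d_2)}\,\|\delta_R F\|_{L^2}^2$. Combining this with the rescaling identity above and solving for the integral involving $\mathcal{K}_{F_{M,R}(\mathcal{L},T)}$ produces exactly the claimed bound.

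The only potentially subtle step is bookkeeping the homogeneity of the Laguerre profiles and of the maps $\mu\mapsto b_n^\mu, P_{n,\mu}, R_\mu$ under the rescaling $\mu\mapsto R^{-2}\mu$ — but this is precisely what was carried out in \eqref{Relation of dilated non dilated eigen values}--\eqref{Relation dilated nondilated eigen function} for the first-layer case, and the argument is insensitive to whether the weight is first-layer or second-layer. There is no new analytic ingredient; the lemma is a direct transcription of Proposition \ref{Theorem : First layer weighted Plancherel}'s proof with the single substitution of the cited input.
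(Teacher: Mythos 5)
Your proposal matches the paper's argument: the paper's entire proof of this lemma is the single sentence pointing back to the proof of Proposition \ref{Theorem : First layer weighted Plancherel} with Proposition 8.2 of \cite{Niedorf_Metivier_group_2023} in place of Proposition 8.1, and the rescaling bookkeeping you spell out is exactly what that substitution produces. One caveat that your proposal handles implicitly but is worth flagging: as printed the lemma carries the first-layer weight $|x|^{\alpha}$, which under the change of variables you describe would yield $R^{Q-2\alpha}$ exactly as in Proposition \ref{Theorem : First layer weighted Plancherel}; the stated factor $R^{Q-4\alpha}$ requires a second-layer weight (a quantity homogeneous of degree two under $\delta_R$, consistent with the sets $\widetilde{A}_k^{M}$ defined via $\langle u - u_k^M, v\rangle$ just before the lemma), and your reasoning about the degree-two homogeneity correctly works with that intended weight rather than the printed $|x|^{\alpha}$.
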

Then as in \cite[Lemma 8.3]{Niedorf_Metivier_group_2023} and \eqref{Interpolation for all points} with the help of Lemma \ref{second layer weighted Plancherel for exceptional} by interpolation and using the fact $A_k^{M} \subseteq \widetilde{A}^{M}$, for $s'>1/2$ we get
\begin{align*}
    & \|(1-\chi_{\widetilde{A}_k^{M}}) \chi_{\widetilde{A}^{M}} F^{(i)}_{M,t} (\mathcal{L}, T) \chi_{A_k^{M}}f \|_{L^2(G)} \\
    &\lesssim C 2^{-2i\gamma \Bar{N}(1/p - 1/2)}\, t^{-Q(1/p -1/2)}\, 2^{2i s' (1- 1/p)}\, \|F^{(i)}\|_{L^2}\, \|\chi_{\widetilde{A}^{M}}f\|_{L^p(G)} \\
    &\lesssim C 2^{-2i\gamma \Bar{N}(1/p - 1/2)}\, t^{-Q(1/p -1/2)}\, 2^{2i s' (1- 1/p)}\, \|F^{(i)}\|_{L^2}\, (2^i t)^{Q(1/p- 1/q)}\|\chi_{\widetilde{B}_0} f\|_{L^q(G)} .
\end{align*}
Now using upper bound of $K_M \lesssim 2^{i-M} \leq 2^i$ we have
\begin{align*}
    & \left\|\sum_{k=1}^{K_M} (1-\chi_{\widetilde{A}_k^{M}}) \chi_{\widetilde{A}^{M}} F^{(i)}_{M,t} (\mathcal{L}, T) \chi_{A_k^{M}}f \right\|_{L^s(G)} \\
    &\leq C 2^i \bigl( 2^M t \,2^{\gamma i} \bigr)^{d_1(1/s -1/2)}\, (2^it)^{2d_2(1/s - 1/2)} 2^{-2i\gamma \Bar{N}(1/p - 1/2)}\, t^{-Q(1/p -1/2)}\\
    & \hspace{6cm} 2^{2i s' (1- 1/p)}\, \|F^{(i)}\|_{L^2}\, (2^i t)^{Q(1/p- 1/q)}\|\chi_{\widetilde{B}_0} f\|_{L^q(G)} .
\end{align*}
Therefore by choosing $\Bar{N}$ large, one term in \eqref{for exceptional case Lq to Ls decomposition} can be estimated as follows.
\begin{align}
\label{Exceptional case error part}
    & (2^i t)^{Q(1/q- 1/s)q}\, \|b\|_{BMO^{\varrho}(G)}^q 2^{i \varepsilon} 2^{C \gamma j}  \sum_{M = -l_0} ^{i} \sum_{m=1} ^{N_M} \left\|\sum_{k=1}^{K_M} (1-\chi_{\widetilde{A}_k^{M}}) \chi_{\widetilde{A}^{M}} F^{(i)}_{M,t} (\mathcal{L}, T) \chi_{A_k^{M}}f \right\|_{L^s(G)}^q \\
    &\nonumber \leq C (2^i t)^{Q(1/q- 1/s)q}\, \|b\|_{BMO^{\varrho}(G)}^q 2^{i \varepsilon} 2^{C \gamma j}  \sum_{M = -l_0} ^{i} \sum_{m=1} ^{N_M} 2^{i q} \bigl( 2^M t \,2^{\gamma i} \bigr)^{d_1(1/s -1/2)q}\, (2^it)^{2d_2(1/s - 1/2)q} \\
    &\nonumber \hspace{2cm} 2^{-2i\gamma \Bar{N}(1/p - 1/2) q}\, t^{-Q(1/p -1/2) q} 2^{2i s' (1- 1/p) q}\, \|F^{(i)}\|_{L^2}^q \, (2^i t)^{Q(1/p- 1/q)q}\|\chi_{\widetilde{B}_0} f\|_{L^q(G)}^q \\
    &\nonumber \leq C (2^i )^{Q(1/q- 1/s)q}\, \|b\|_{BMO^{\varrho}(G)}^q 2^{i \varepsilon} 2^{C \gamma j} 2^{i(1+d_1)} \bigl( 2^i  \,2^{\gamma i} \bigr)^{d_1(1/s -1/2)q}\, 2^{2 i d_2(1/s - 1/2)q} \\
    &\nonumber \hspace{2cm} 2^{-2i\gamma \Bar{N}(1/p - 1/2)q}\, 2^{2i s' (1- 1/p)q}\, \|F\|_{L^2}^q\, 2^{i Q(1/p- 1/q)q}\|\chi_{\widetilde{B}_0} f\|_{L^q(G)}^q \\
    &\nonumber \leq C 2^{-i \delta}\, \|b\|_{BMO^{\varrho}(G)}^q \| F\|_{L^2}^q \, \|\chi_{\widetilde{B}_0}f\|_{L^q(G)}^q ,
\end{align}
for some $\delta>0$.

On the other hand using the bounded overlapping property of the sets $\widetilde{A}_k^{M}$ we have
\begin{align*}
    \left\|\sum_{k=1}^{K_M} \chi_{\widetilde{A}_k^{M}} \chi_{\widetilde{A}^{M}} F^{(i)}_{M,t} (\mathcal{L}, T) \chi_{A_k^{M}}f \right\|_{L^s(G)}^q &\leq C 2^{C \gamma i} \left(\sum_{k=1}^{K_M} \| \chi_{\widetilde{A}_k^{M}} F^{(i)}_{M,t} (\mathcal{L}, T) \chi_{A_k^{M}}f\|_{L^s(G)}^s \right)^{q/s} .
\end{align*}
From H\"older's inequality, Corollary \ref{Inequality: Truncated restriction equation weak form I} and using $q<s$ for any $\Tilde{s}>1/2$ we get
\begin{align*}
    & \sum_{k=1}^{K_M} \| \chi_{\widetilde{A}_k^{M}} F^{(i)}_{M,t} (\mathcal{L}, T) \chi_{A_k^{M}}f\|_{L^s(G)}^s \\
    &\leq C \sum_{k=1}^{K_M} \bigl( 2^M t \,2^{\gamma i} \bigr)^{(d_1+1)(1/s -1/2)s}\, (2^it)^{(2d_2-1)(1/s - 1/2)s} \, \| F^{(i)}_{M,t} (\mathcal{L}, T) \chi_{A_k^{M}}f\|_{L^2(G)}^s \\
    &\leq C \bigl( 2^M t \,2^{\gamma i} \bigr)^{(d_1+1)(1/s -1/2)s}\, (2^it)^{(2d_2-1)(1/s - 1/2)s}\, t^{-Q(1/p - 1/2)s} 2^{-Md_2(1/p- 1/2)s}\, \|\psi F^{(i)}\|_{2^M, 2}^s \\
    &\hspace{6cm} (2^M t)^{(d_1+1)(1/p - 1/q)s}\, (2^i t)^{(2d_2-1)(1/p - 1/q)s}\,\sum_{k=1}^{K_M} \|\chi_{A_k^{M}}f\|_{L^q(G)}^s \\
    &\leq C \bigl( 2^M t \,2^{\gamma i} \bigr)^{(d_1+1)(1/s -1/2)s}\, (2^it)^{(2d_2-1)(1/s - 1/2)s}\, t^{-Q(1/p - 1/2)s} 2^{-Md_2(1/p- 1/2)s}\, \|\psi F^{(i)}\|_{2^M, 2}^s \\
    &\hspace{4cm} (2^M t)^{(d_1+1)(1/p - 1/q)s}\, (2^i t)^{(2d_2-1)(1/p - 1/q)s}\, \left(\sum_{k=1}^{K_M} \|\chi_{A_k^{M}}f\|_{L^q(G)}^q \right)^{s/q} \\
    &\leq C \bigl( 2^M t \,2^{\gamma i} \bigr)^{(d_1+1)(1/s -1/2)s}\, (2^it)^{(2d_2-1)(1/s - 1/2)s}\, t^{-Q(1/p - 1/2)s} 2^{-Md_2(1/p- 1/2)s}\, \\
    &\hspace{3cm} 2^{(i-M)\Tilde{s} s} \| F^{(i)}\|_{L^2}^s (2^M t)^{(d_1+1)(1/p - 1/q)s}\, (2^i t)^{(2d_2-1)(1/p - 1/q)s}\, \|\chi_{A^{M}}f\|_{L^q(G)}^s .
\end{align*}
Therefore the remaining term in \eqref{for exceptional case Lq to Ls decomposition}(the one term which not estimated above) we estimate as follows.
\begin{align*}
    & (2^i t)^{Q(1/q- 1/s)q}\, \|b\|_{BMO^{\varrho}(G)}^q 2^{i \varepsilon} 2^{C \gamma j}  \sum_{M = -l_0} ^{i} \sum_{m=1} ^{N_M} \left\|\sum_{k=1}^{K_M} \chi_{\widetilde{A}_k^{M}} \chi_{\widetilde{A}^{M}} F^{(i)}_{M,t} (\mathcal{L}, T) \chi_{A_k^{M}}f \right\|_{L^s(G)}^q \\
    &\leq C (2^i t)^{Q(1/q- 1/s)q}\, \|b\|_{BMO^{\varrho}(G)}^q 2^{i \varepsilon} 2^{C \gamma j}  \sum_{M = -l_0} ^{i} \sum_{m=1} ^{N_M} \bigl( 2^M t \,2^{\gamma i} \bigr)^{(d_1+1)(1/s -1/2)q}\, (2^it)^{(2d_2-1)(1/s - 1/2)q}\, \\
    & t^{-Q(1/p - 1/2)q} 2^{-Md_2(1/p- 1/2)q}\,2^{(i-M)\Tilde{s} q} \| F^{(i)}\|_{L^2}^q (2^M t)^{(d_1+1)(1/p - 1/q)q}\, (2^i t)^{(2d_2-1)(\frac{1}{p} - \frac{1}{q})q}\, \|\chi_{B_m ^{M}}f\|_{L^q(G)}^q \\
    &\leq C 2^{i \varepsilon}\, \|b\|_{BMO^{\varrho}(G)}^q 2^{i \varepsilon} 2^{C \gamma j}  \sum_{M = -l_0} ^{i} 2^{M(d_1+1)(1/s -1/2)q}\, 2^{i(2d_2-1)(1/s - 1/2)q}\, \\
    &\hspace{1cm} 2^{-Md_2(1/p- 1/2)q}\,2^{(i-M)\Tilde{s} q} 2^{-i \beta q} \| F^{(i)}\|_{L^2_{\beta}}^q 2^{M (d_1+1)(1/p - 1/q)q}\, 2^{i(2 d_2-1)(1/p - 1/q)q}\, \|\chi_{\widetilde{B}_0}f\|_{L^q(G)}^q \\
    &\leq C 2^{i \varepsilon}\,2^{C \gamma j}\, \|b\|_{BMO^{\varrho}(G)}^q  \sum_{M = -l_0}^{i} 2^{M(1/p - 1/q+1/s -1/2)q+M d_1(1/p - 1/q+1/s -1/2)q-Md_2(1/p- 1/2)q-M \Tilde{s} q}\, \\
    &\hspace{1cm} 2^{-i(1/p - 1/q+1/s - 1/2)q+ i\Tilde{s} q +2 i d_2(1/p - 1/q+1/s - 1/2)q}\, 2^{-i d(1/p-1/2) q} \| F^{(i)}\|_{L^2_{d(1/p-1/2)}}^q \, \|\chi_{\widetilde{B}_0}f\|_{L^q(G)}^q \\
    &\leq C 2^{i \varepsilon}\,2^{C \gamma j}\, \|b\|_{BMO^{\varrho}(G)}^q \| F\|_{L^2_{d(1/p-1/2)}}^q \, \|\chi_{\widetilde{B}_0}f\|_{L^q(G)}^q  \sum_{M = -l_0}^{i} 2^{q(M-i)\{1+ (d_1-d_2)- \Tilde{s} \frac{2p}{2-p}\}(1/p -1/2)} .
\end{align*}
Note that for $1\leq p \leq p_{d_2}$ we have
\begin{align*}
    \frac{1}{p}-\frac{1}{2} \geq \frac{d_2+3}{2(d_2+1)}-\frac{1}{2} =\frac{1}{d_2+1} ,
\end{align*}
therefore at $(d_1, d_2)=(4,3)$ we get
\begin{align*}
    1+ (d_1-d_2)- \Tilde{s} \frac{2p}{2-p} \geq 1+ (d_1-d_2)- \Tilde{s} (d_2+1) = 2-4\Tilde{s} .
\end{align*}
Then for $\Tilde{s}>1/2$, choosing $\Tilde{s}$ very close to $1/2$ one can see
\begin{align*}
    \sum_{M = -l_0}^{i} 2^{q(M-i)\{1+ (d_1-d_2)- \Tilde{s} \frac{2p}{2-p}\}(1/p -1/2)} < \infty .
\end{align*}
As $\beta>d(1/p-1/2)$, choosing $\gamma>0$ very small yields
\begin{align}
\label{Exceptional case main part}
    & (2^i t)^{Q(1/q- 1/s)q}\, \|b\|_{BMO^{\varrho}(G)}^q 2^{i \varepsilon} 2^{C \gamma j}  \sum_{M = -l_0} ^{i} \sum_{m=1} ^{N_M} \left\|\sum_{k=1}^{K_M} \chi_{\widetilde{A}_k^{M}} \chi_{\widetilde{A}^{M}} F^{(i)}_{M,t} (\mathcal{L}, T) \chi_{A_k^{M}}f \right\|_{L^s(G)}^q \\
    &\nonumber \leq C 2^{-i \delta}\, \|b\|_{BMO^{\varrho}(G)}^q \| F\|_{L^2_{\beta}}^q \, \|\chi_{\widetilde{B}_0}f\|_{L^q(G)}^q ,
\end{align}
for some $\delta>0$.

Then combining the estimates of \eqref{Exceptional case error part} and \eqref{Exceptional case main part} from \eqref{for exceptional case Lq to Ls decomposition} we get
\begin{align*}
    & \left\| \sum_{M= -l_0} ^i \sum_{m= 1} ^{N_M} \chi_{4B_0} (b - b_{B_0}) \chi_{\widetilde{B}_m ^{M}} F^{(i)}_{M,t} (\mathcal{L}, T) \chi_{B_m ^{M}}f \right\|_{L^q(G)} \\
    &\nonumber \leq C 2^{-i \delta}\, \|b\|_{BMO^{\varrho}(G)} \| F\|_{L^2_{\beta}} \, \|\chi_{\widetilde{B}_0}f\|_{L^q(G)} .
\end{align*}
This completes the proof of Theorem \ref{Theorem: Multiplier for Commutator} for the point $(d_1, d_2)=(4,3)$.
\end{proof}

\begin{proof}[Proof of Theorem \ref{Theorem: Bochner-riesz commuator on Heisenber-type group}]
As the class of Heisenberg group is smaller than the class of M\'etivier groups and $p_{d_1, d_2} = p_{d_2} = \frac{2(d_2 + 1)}{d_2 + 3}$ for $(d_1, d_2) \notin \{(8,6), (8,7) \}$, proof of Theorem \eqref{Theorem: Bochner-riesz commuator on Heisenber-type group} follows from the proof of Theorem \eqref{Theorem: Bochner-Riesz commutator on Metivier} except for the point $(d_1, d_2) = \{(8,6), (8,7) \}$. We will prove Theorem \eqref{Theorem: Bochner-riesz commuator on Heisenber-type group} by proving Theorem \eqref{Theorem: Multiplier for Commutator} for the case of Heisenberg type group with $1 \leq p \leq 2(d_2 + 1)/(d_2 + 3)$. If one look carefully the proof of Theorem \eqref{Theorem: Multiplier for Commutator}, one can see that the restriction for the range of $p$ comes from the estimates \eqref{equation: break1}- \eqref{equation: main term process 1}, which occurs because the weaker type restriction estimate available for M\'etivier groups. Therefore it is clear that the presence of discrete norm $\|\delta_{R} F\|_{2^M,2 }$ in the right hand side of Theorem \ref{Theorem: Truncated Restriction Estimate strong form} is the main obstacle from getting desired ranges of $p$ for the case $(d_1, d_2) \in \{(8,6), (8,7)\}$. If we can replace the discrete norm $\|\delta_{R} F\|_{2^M,2 }$  with $\|\delta_{R} F\|_{L^2(G)}$, we can improve the ranges of $p$. Such estimates are known to be true for Heisenberg type groups. Indeed we have the following result.
\begin{proposition}
\label{Prop: restriction estimate for Heisenberg type}
    Let $\mathbb{H}$ be Heisenberg type group. Suppose $1 \leq p \leq 2(d_2 + 1)/(d_2 + 3)$. Then for $F$ and $F_{M,R}$ as defined in Section \eqref{section: restriction and kernel estimate}, we have
\begin{align*}
    \| F_{M,R}(\mathcal{L}, T) f\|_{L^2(G) } &\leq C R^{Q(1/p - 1/2) } 2^{-M d_2(1/p-1/2)} \,\|\delta_{R} F\|_{L^2}\, \|f\|_{L^p(G)} .
\end{align*}
\end{proposition}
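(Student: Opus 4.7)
The plan is to follow the same scaling reduction as in the proof of Proposition~\ref{Theorem: Truncated Restriction Estimate strong form}, but to replace the weak restriction estimate (with the discrete norm $\|\cdot\|_{2^{M},2}$) by the sharp Stein-Tomas restriction estimate for Heisenberg type groups, which is available in the full range $1 \leq p \leq 2(d_2+1)/(d_2+3)$ from the works of Liu-Wang \cite{Liu_Wang_Restriction_Heisenberg_Type_2011} and Thangavelu \cite{Thangavelu_Restriction_product_Heisenberg_group_1991}.

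First I would use the dilation identity
\begin{align*}
F_{M,R}(\mathcal{L}, T)f = \delta_R \circ F(R\sqrt{\mathcal{L}})\,\Theta(R^{2} 2^{M} T) \circ \delta_{R^{-1}} f
\end{align*}
to reduce matters to the unscaled case, where $F$ is supported in $[1/8, 8]$ and $\Theta(2^M \cdot)$ localizes the spectrum of $T$ to the dyadic annulus $|\tau| \sim 2^{-M}$. The factor $R^{Q(1/p-1/2)}$ then emerges from the $L^p \to L^2$ bookkeeping of the dilations exactly as in the proof of Proposition~\ref{Theorem: Truncated Restriction Estimate strong form}.

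Second, I would exploit the Heisenberg type structure: since $J_{\mu}^{2} = -|\mu|^{2} \operatorname{id}_{\mathfrak{g}_{1}}$, Proposition~\ref{Prop: structural properties} collapses to a single eigenvalue $b^{\mu} = |\mu|$ of multiplicity $r = d_{1}/2$, and Proposition~\ref{Prop: Kernel expression} writes the kernel of $F(\sqrt{\mathcal{L}})\,\Theta(2^{M} T)$ as a Laguerre expansion in which the $\mu$-integration factors through $|\mu|$. By freezing the direction $\bar{\mu} = \mu/|\mu|$ and writing $d\mu = \lambda^{d_{2}-1}\, d\lambda\, d\sigma(\bar{\mu})$, the $\Theta(2^{M}|\mu|)$ factor forces $\lambda \sim 2^{-M}$, producing an overall factor of $2^{-M d_{2}}$ from the $d_{2}$-dimensional Jacobian. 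Combined with the sharp $L^p \to L^{p'}$ bound for the spectral projector of $\sqrt{\mathcal{L}}$ at scale $\lambda$ on $\mathbb{H}$, an application of $TT^{*}$ and Plancherel in the $\lambda$-variable yields the clean $\|F\|_{L^{2}}$ norm on the right with the correct power $2^{-M d_{2}(1/p - 1/2)}$.

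The main obstacle will be packaging the restriction/Stein-Tomas bounds from \cite{Liu_Wang_Restriction_Heisenberg_Type_2011} and \cite{Thangavelu_Restriction_product_Heisenberg_group_1991} (which are stated for a fixed frequency $\lambda$) into the dyadic $T$-localized form needed here, and verifying that the $\lambda$-integration does collapse to an $L^2$ norm of $F$ without losing powers of $2^{M}$; this is precisely the point where the weak restriction estimate in the M\'etivier case produces the discrete norm $\|\cdot\|_{2^{M},2}$ as a substitute. Once the sharp restriction estimate is available in the Heisenberg type setting (as it is here by hypothesis on $p$), Plancherel in $\lambda$ applies directly and the discrete norm can be replaced by $\|\delta_{R} F\|_{L^{2}}$, completing the proof.
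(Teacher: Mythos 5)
Your high-level strategy is right --- rescale exactly as in Proposition~\ref{Theorem: Truncated Restriction Estimate strong form} and then appeal to a Heisenberg-type restriction estimate which produces a clean $L^2$ norm rather than the discrete $\|\cdot\|_{2^M,2}$ norm --- but your plan for the second step differs from the paper's and leaves a genuine gap. You propose to re-derive the truncated restriction estimate from the $L^p\to L^{p'}$ spectral projector bounds of Liu--Wang and Thangavelu by a $TT^*$ argument plus Plancherel in the radial variable $\lambda = |\mu|$, and you correctly flag that ``packaging'' those fixed-$\lambda$ projector estimates into the dyadic $T$-localized form is the crux of the matter. That packaging, however, is not a routine step: the Liu--Wang and Thangavelu bounds are projector estimates for the scalar operator $\mathcal{L}$ alone, while what is needed is an $L^p\to L^2$ bound for the joint operator $F(\sqrt{\mathcal{L}})\Theta(2^M T)$, and turning the former into the latter with the precise gain $2^{-Md_2(1/p-1/2)}$ is exactly the content of Niedorf's truncated restriction theorem for Heisenberg type groups \cite[Theorem~3.2]{Niedorf_Spectral_Multiplier_Heisenber_Group_2024}. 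The paper simply cites that result rather than re-deriving it.

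There is also a technical subtlety you do not address and which the paper handles explicitly. In Niedorf's Heisenberg-type theorem the dyadic cutoff is taken along the spectral variable $[k]=2k+d_1/2$, whereas here the cutoff $\Theta(2^M\cdot)$ is applied to $|\mu|$, i.e.\ to the spectrum of $T$. Since the eigenvalues in the Heisenberg-type case are $[k]|\mu|$ and $\delta_R F$ is supported away from the origin (so $[k]|\mu|\sim 1$), the two cutoffs are reciprocal and lead to equivalent conclusions, but this needs to be observed; your sketch silently assumes the restriction estimate is already stated with the $|\mu|$-truncation. So while your route is conceptually plausible, it both omits the correct reference (Niedorf's Theorem~3.2, which makes the proof a one-line citation plus the cutoff remark) and leaves unresolved the very step you identify as the main difficulty.
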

This proposition can be proved along the same way as we proved Proposition \eqref{Theorem: Truncated Restriction Estimate strong form} with the help of \cite[Theorem 3.2]{Niedorf_Spectral_Multiplier_Heisenber_Group_2024}. For Heisenberg type groups the eigen values are of the form $[k]|\mu|=(2k+d_1/2)|\mu|$. Although the cutoff considered in \cite[Theorem 3.2]{Niedorf_Spectral_Multiplier_Heisenber_Group_2024} (they have considered cutoff along $[k]:=2k+d_1/2$ variable) is different from the cutoff here we considered (along $|\mu|$ variable), because the support of $\delta_R F$ lies away from origin, these two cutoffs are essentially give the same result. Indeed instead of taking $\Theta(2^{-M}[k])$ if we take $\Theta(2^{M}|\mu|)$, then the same conclusion hold as in \cite[Theorem 3.2]{Niedorf_Spectral_Multiplier_Heisenber_Group_2024}.

Therefore having Proposition \eqref{Prop: restriction estimate for Heisenberg type} in hand and following the arguments of the proof of Theorem \ref{Theorem: Multiplier for Commutator}, in the estimate of \eqref{equation: break2 process 1} instead of Proposition \eqref{Theorem: Truncated Restriction Estimate strong form}, here using Proposition \eqref{Prop: restriction estimate for Heisenberg type}, and choosing $\gamma$ small enough for $1 \leq p \leq 2(d_2 + 1)/(d_2 + 3)$ we get
\begin{align*}
    & \left\|  \sum_{M= 0} ^i \sum_{m= 1} ^{N_M} \chi_{4B_0} (b - b_{B_0}) \chi_{\Tilde{B}_m ^{M}} F^{(i)}_{M,t} (\mathcal{L}, T) \chi_{B_m ^{M}}f \right\|_{L^q(G)} ^q \\  
    & \lesssim  2^{i \varepsilon }\,  2^{i 2d_2(1/p - 1/2)q} \,\|b\|_{BMO^{\varrho}(G)}^q \, \|\psi F^{(i)}\|_{L^2}^q \, \|\chi_{\widetilde{B}_0}f\|_{L^q(G)} ^q \sum_{M = 0} ^{i} 2^{M (d_1-d_2)(1/p - 1/2)q} \\
    & \lesssim  2^{i \varepsilon }\,  2^{i d(1/p - 1/2)q}\, 2^{-i\beta q} \,\|b\|_{BMO^{\varrho}(G)}^q \, \|F\|_{L^2_{\beta}}^q \, \|\chi_{\widetilde{B}_0}f\|_{L^q(G)} ^q \sum_{M = 0} ^{i} 2^{(M-i) (d_1-d_2)(1/p - 1/2)q} \\
    &\lesssim 2^{-\delta i} \,\|b\|_{BMO^{\varrho}(G)}^q \, \|F\|_{L^2_{\beta}}^q \, \|\chi_{\widetilde{B}_0}f\|_{L^q(G)}^q ,
\end{align*}
for some $\delta>0$, as we have $\beta> d(1/p - 1/2)$, where in the last inequality we have used the fact that on Heisenberg type groups we always have $d_1>d_2$.

Hence, one can conclude that Theorem \eqref{Theorem: Multiplier for Commutator} holds on Heisenberg type groups for all $d_1, d_2 \geq 1$ with $1 \leq p \leq 2(d_2 + 1)/(d_2 + 3)$. 
\end{proof}

\section{Compactness of Bochner-Riesz commutator} \label{section: proof of copactness}

In this section, we  give the prove Theorem (\ref{Theorem: Compactness of Bochner-Riesz commutator}). In order to prove Theorem (\ref{Theorem: Compactness of Bochner-Riesz commutator}), we need the following result which characterizes the relatively compact subsets of $L^p(G)$, which is also known as Kolmogorov-Riesz compactness theorem.
\begin{proposition}\cite[Lemma 4.3]{Chen_Duong_Li_Wu_Compactness_Riesz_Transform_Stratified_group_2019}
\label{Theorem: Kolmogorov-Riesz compactness theorem}
Let $1 < p < \infty$ and $(x_0,u_0) \in G$. The subset $\mathcal{F}$ in $L^p(G)$ is relatively compact if and only if the following conditions are satisfied:
\begin{enumerate}
       \item $\displaystyle{ \sup_{f \in \mathcal{F}} \|f\|_{L^p (G)} < \infty}$;
       \item $\displaystyle{\lim_{R \rightarrow \infty} \int_{G\setminus B((x_0,u_0), R)}} |f(x,u)|^p\, d(x,u) = 0$ uniformly in $f\in \mathcal{F}$;
       \item $\displaystyle{\lim_{r \rightarrow 0} \int_{G} |f(x,u) - f_{B((x,u), r)}|^p\, d(x,u) = 0   }$ uniformly in $f\in \mathcal{F}$.
\end{enumerate}
\end{proposition}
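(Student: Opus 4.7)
The plan is to prove necessity and sufficiency separately; sufficiency is the substantive direction. The central idea is that the averaging operator $A_r f(x,u) := f_{B((x,u),r)}$ is a metric-space mollifier: condition (3) asserts that $f$ is $L^p$-close to $A_r f$ uniformly over $\mathcal{F}$, so it suffices to prove that $\{A_r f : f \in \mathcal{F}\}$ is relatively compact for some small $r$, and then to transfer compactness back to $\mathcal{F}$ by the triangle inequality.

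For necessity, (1) is immediate since relatively compact sets in normed spaces are bounded. For (2) and (3), extract a finite $\epsilon$-net $\{f_1,\dots,f_N\} \subseteq \mathcal{F}$: the tails $\int_{G \setminus B((x_0,u_0),R)} |f_i|^p \to 0$ as $R \to \infty$ by dominated convergence, and $\int_G |f_i - A_r f_i|^p \to 0$ as $r \to 0$ by the Lebesgue differentiation theorem on the doubling space $(G,\varrho,|\cdot|)$. The Jensen bound $\|A_r\|_{L^p \to L^p} \le 1$ together with the $\epsilon$-net upgrades both limits uniformly in $\mathcal{F}$.

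For sufficiency, fix $\epsilon>0$. Select $R>0$ from (2) and $r>0$ from (3) so that $\|f\chi_{G\setminus B((x_0,u_0),R)}\|_p < \epsilon/3$ and $\|f - A_r f\|_p < \epsilon/3$ for every $f \in \mathcal{F}$; let $M := \sup_{f \in \mathcal{F}} \|f\|_p$. Since the metric $\varrho$ and the Haar measure are left-invariant, $B((x,u),r)=(x,u)B(0,r)$, and H\"older's inequality yields
\begin{equation*}
   |A_r f(x,u) - A_r f(x',u')| \leq \frac{|B(0,r)\triangle h B(0,r)|^{1/p'}}{|B(0,r)|}\, M, \qquad h := (x,u)^{-1}(x',u'),
\end{equation*}
where the right-hand side depends on $(x,u),(x',u')$ only through $\varrho(h,e)=\varrho((x,u),(x',u'))$ and tends to zero with this quantity by continuity of left translation in $L^1(G)$. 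Hence $\{A_r f|_{\overline{B((x_0,u_0),R+r)}} : f \in \mathcal{F}\}$ is uniformly equicontinuous, and the pointwise bound $|A_r f|\leq M|B(0,r)|^{-1/p}$ gives equiboundedness. Arzel\`a--Ascoli provides a finite sup-norm $\delta$-net on the compact closed ball, which converts (via the finite volume) into a finite $\epsilon/3$-net in $L^p(B((x_0,u_0),R+r))$. Moreover, for $(x,u) \notin B((x_0,u_0),R+r)$ the ball $B((x,u),r)$ misses $B((x_0,u_0),R)$, so $A_r f(x,u)$ depends only on $f|_{G \setminus B((x_0,u_0),R)}$ and the Jensen bound gives $\|A_r f\|_{L^p(G\setminus B((x_0,u_0),R+r))} < \epsilon/3$. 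Assembling the three $\epsilon/3$-pieces by the triangle inequality produces a finite $\epsilon$-net for $\mathcal{F}$ in $L^p(G)$, establishing total boundedness and hence relative compactness.

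The main technical obstacle is making the equicontinuity estimate uniform in $f \in \mathcal{F}$; the reduction above pushes this entirely onto the $L^1$-continuity of left translation $g \mapsto L_g \chi_{B(0,r)}$, a standard property of locally compact groups with Haar measure whose modulus depends only on $r$ and the group structure, and in particular not on the individual $f$. The remainder is careful bookkeeping with the three $\epsilon/3$-pieces.
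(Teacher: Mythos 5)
The paper does not give its own proof of this proposition: it is cited verbatim as Lemma~4.3 of \cite{Chen_Duong_Li_Wu_Compactness_Riesz_Transform_Stratified_group_2019}, so there is no in-paper argument to compare against. Your proof supplies a correct, self-contained argument of the expected Fr\'echet--Kolmogorov type, using the ball-average $A_r$ as the mollifier, the left-invariance of $\varrho$ and of Haar measure to get the translation-kernel form of $A_r$, the $L^1$-continuity of left translation to obtain an equicontinuity modulus, Arzel\`a--Ascoli on the compact ball $\overline{B((x_0,u_0),R+r)}$, and the observation that $A_r$ is triangle-inequality-localized so condition (2) controls the exterior of that ball. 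The necessity direction via a finite $\epsilon$-net, dominated convergence, the $L^p$-version of Lebesgue differentiation on the doubling space $(G,\varrho,|\cdot|)$, and the Jensen bound $\|A_r\|_{L^p\to L^p}\leq 1$ is also correct. One small imprecision worth fixing: the quantity $|B(0,r)\triangle hB(0,r)|$ does not depend on $h$ only through $\varrho(h,e)$; what you actually need, and what the $L^1$-continuity of $g\mapsto L_g\chi_{B(0,r)}$ does give, is that $\sup_{\varrho(h,e)\leq\delta}|B(0,r)\triangle hB(0,r)|\to 0$ as $\delta\to 0$, which is exactly the uniform modulus required for Arzel\`a--Ascoli. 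With that rewording the argument is complete.
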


With the above proposition in hand we are ready to proof the Theorem (\ref{Theorem: Compactness of Bochner-Riesz commutator}).

\begin{proof}[Proof of Theorem (\ref{Theorem: Compactness of Bochner-Riesz commutator})]
Let us set $F(\eta) = (1-\eta^2)_{+}^{\alpha}$. Notice that from \eqref{equation: breaking of F into fourier transform side} and (\ref{equation: inequality for F^i}), it follows that $\sum_{i =0}^{N} [b, F^{(i)}(\sqrt{\mathcal{L}})]$ converges to $[b, F(\sqrt{\mathcal{L}})]$ in the $L^q(G)$ norm. Then using the fact that compact operators are closed under the norm limit, the proof is reduced to show that $[b, F^{(i)}(\sqrt{\mathcal{L}})]$ is compact for all $i\geq 0$. Moreover, since $C_c ^{\infty} (G)$ is dense in $CMO^{\varrho}(G)$, we may restrict ourselves to the case when $b \in  C_c ^{\infty} (G)$. To this end, let $\mathcal{F}$ be any arbitrary bounded set in $L^q(G)$. To prove that $[b, F^{(i)}(\sqrt{\mathcal{L}})]$ is a compact operator, we have to show that the set $\mathcal{E}= \{ [b, F^{(i)}(\sqrt{\mathcal{L}})]f: f \in \mathcal{F}\}$ is relatively compact for each $i \geq 0$, that is $\mathcal{E}$ satisfies all the conditions (1), (2) and (3) of Proposition (\ref{Theorem: Kolmogorov-Riesz compactness theorem}).

The condition (1) is readily follows from (\ref{equation: inequality for F^i}) and the fact that $F\in L^2_{\beta}$ if and only if $\alpha > \beta - 1/2$. Indeed
\begin{align*}
   \sup_{f \in \mathcal{F}} \|[b, F^{(i)}(\sqrt{\mathcal{L}})]f \|_{L^q (G)}
    & \leq C \, \sup_{f \in \mathcal{F}} \|b\|_{BMO^{\varrho}(G)} \|F\|_{L^2 _{\beta}(\mathbb{R})} \|f\|_{L^q (G)} < \infty.
\end{align*}

Next, we proceed to verify remaining other two conditions of Proposition (\ref{Theorem: Kolmogorov-Riesz compactness theorem}). To this end, 
we choose a function $\psi\in C_c ^{\infty} (-4,4)$ such that $\psi(\lambda) = 1$ on $(-2,2)$ and $\varphi \in C_c^{\infty}(2,8)$. Then for all $\lambda>0$ we can decompose as follows:
\begin{align}\label{equation: decomposition of F^i}
  F^{(i)}(\lambda) = (\psi F^{(i)})(\lambda) + \sum_{\iota \geq 0} (\varphi_{\iota} F^{(i)})(\lambda), 
\end{align}
where $\phi_{\iota}(\lambda) = \varphi(2^{-\iota }\lambda)$ for $\iota>0$.

Now let us verify the condition (2). First note that it is enough to verify the condition (2) for all $C_c^{\infty}(G)$ functions in $\mathcal{F}$. Let $\epsilon>0$. Then for $f \in \mathcal{F}$, there exists $g \in C_c^{\infty}(G)$ such that $\|f-g\|_{L^q(G)} < \epsilon$. So that from \eqref{equation: inequality for F^i} for any $R>0$, we have
\begin{align}
\label{Compact commutator density reduction}
     & \|\chi_{G\setminus B(0,\, R)} [b, F^{(i)}(\sqrt{\mathcal{L}})]f \|_{L^q(G)} \\
     &\nonumber \leq  \|\chi_{G\setminus B(0,\, R)} [b, F^{(i)}(\sqrt{\mathcal{L}})](f-g) \|_{L^q(G)} +  \|\chi_{G\setminus B(0,\, R)} [b, F^{(i)}(\sqrt{\mathcal{L}})]g \|_{L^q(G)} \\
     &\nonumber \leq \epsilon + \|\chi_{G\setminus B(0,\, R)} [b, F^{(i)}(\sqrt{\mathcal{L}})]g \|_{L^q(G)} .
\end{align}
Therefore to check the condition (2), enough to show as $R \to \infty$,
\begin{align*}
    \|\chi_{G\setminus B(0,\, R)} [b, F^{(i)}(\sqrt{\mathcal{L}})]g \|_{L^q(G)} < \epsilon \quad \text{for} \quad g \in C_c^{\infty}(G) .
\end{align*}
As $b \in C_c^{\infty}(G)$, there exists $j\in \mathbb{N}$ such that $\supp{b} \subseteq B(0, 2^j)$. Take $R =  2\cdot 2^{2j}$. Then using the expression \eqref{equation: decomposition of F^i}, we decompose as follows.
\begin{align*}
   &  \|\chi_{G\setminus B(0,\, 2 \cdot 2^{2j})} [b, F^{(i)}(\sqrt{\mathcal{L}})]g \|_{L^q(G)} \\
   &\leq \|\chi_{G\setminus B(0,\, 2 \cdot 2^{2j})} [b, (\psi F^{(i)})(\sqrt{\mathcal{L}})]g \|_{L^q(G)} + \sum_{\iota \geq 0} \|\chi_{G\setminus B(0,\, 2 \cdot 2^{2j})} [b, (\varphi_{\iota} F^{(i)})(\sqrt{\mathcal{L}})]g \|_{L^q(G)} .
 \end{align*}
First we estimate the second term in the right hand side. Using the support of $b$ and Young's inequality
 \begin{align*}
     & \|\chi_{G\setminus B(0, 2 \cdot 2^{2j})} [b, (\varphi_{\iota} F^{(i)})(\sqrt{\mathcal{L}})]g \|_{L^q (G)} \\
     & = \Biggl\{  \int_{G\setminus B(0, 2 \cdot 2^{2j})} \Bigg | \int_{B(0, 2^j)} \mathcal{K}_{(\varphi_{\iota} F^{(i)})(\sqrt{\mathcal{L}})} ((x,u)^{-1}(y,t))\,  b(y,t)\, g(y,t)\, d(y,t)   \Bigg|^q d(x,u) \Biggl\}^{1/q} \\
     &\leq \Biggl\{  \int_{G} \Bigg | \int_{G} \chi_{G \setminus B(0, 2^{2j})}((x,u)^{-1}(y,t)) \mathcal{K}_{(\varphi_{\iota} F^{(i)})(\sqrt{\mathcal{L}})} ((x,u)^{-1}(y,t)) \\ 
     & \hspace{7cm} \chi_{B(0,2^j)}(y,t)  b(y,t)\, g(y,t)\, d(y,t)   \Bigg|^q d(x,u) \Biggl\}^{1/q} \\
     &\leq C \|b\|_{L^{\infty}} \|g\|_{L^q(G)} \int_{G\setminus B(0, 2^{2j})} | \mathcal{K}_{ (\varphi_{\iota} F^{(i)})(\sqrt{\mathcal{L}})} (x,u)|\, d(x,u) .
 \end{align*}
Note using the definition $F^{(i)}(\lambda) = 2^i (F*\delta_{2^i} \check{\eta})(\lambda)$ as in \eqref{Inequality: Calculation with cutoff phi}, for any fixed $s>0$,
\begin{align}
\label{equation: decay of Fi with phi}
     \|\varphi_{\iota} F^{(i)}(2^{\iota} \cdot)\|_{L^{\infty}_{s}} \leq C 2^{-\Bar{N}(i + \iota)} \| F \|_{L^2} ,
\end{align}
for any $\Bar{N}>0$. 

Then using Proposition (\ref{Proposition: Weighted Plancherel with L infinity condition}),  (\ref{equation: decay of Fi with phi}) and Lemma (\ref{lemma: outside distance}) we have 
\begin{align*}
   & \int_{G\setminus B(0, 2^{2j})} | \mathcal{K}_{ (\varphi_{\iota} F^{(i)})(\sqrt{\mathcal{L}})} (x,u)|\, d(x,u) \\
   & \leq C \int_{\|(x, u)\| >  2^{2j}} \frac{(1+2^{\iota} \|(x,u)\|)^{s} |\mathcal{K}_{ (\varphi_{\iota} F^{(i)})(\sqrt{\mathcal{L}})} (x,u)| }{(1+ 2^{\iota} \|(x,u)\|)^{s} }\, d(x,u) \\
   &\leq C 2^{\iota Q}\, \|\varphi_{\iota} F^{(i)}(2^{\iota} \cdot)\|_{L^{\infty} _{s+ \varepsilon}}\, \int_{\|(x, u)\| >  2^{2j}} \frac{d(x,u)}{(1+ 2^{\iota} \|(x,u)\|)^{s}} \\
   & \leq C 2^{\iota Q} 2^{-\Bar{N}(i+\iota)} \, 2^{-\iota s} 2^{2j(-s + Q)} ,
\end{align*}
for all $\Bar{N}> 0$, $\varepsilon>0$ and $s>Q$.

Then choosing $s>Q$ and $\Bar{N}> Q$, as $j \rightarrow \infty$ we see that 
\begin{align}
\label{Inequality: Condition 3 for phi F}
    \sum_{\iota \geq 0} \|\chi_{G\setminus B(0, 2 \cdot 2^{2j})} [b, (\varphi_{\iota} F^{(i)})(\sqrt{\mathcal{L}})]g \|_{L^q (G)} &\leq C \|b\|_{L^{\infty}} \|g\|_{L^q(G)} 2^{-2j(s - Q)} \sum_{\iota \geq 0} 2^{-\iota (\Bar{N}-Q)}  < \epsilon .
\end{align}
A similar argument also shows that as $j \to \infty$,
\begin{align}
\label{Inequality: Condition 3 for psi F}
    \|\chi_{G\setminus B(0, 2 \cdot 2^{2j})} [b, (\psi F^{(i)})(\sqrt{\mathcal{L}})]g \|_{L^q(G)} & <\epsilon ,
\end{align}
where for each $i \geq 0$, one have to use the following fact, which is a consequence of Sobolev embedding theorem
\begin{align}
\label{equation: F^{i} with psi}
    \|\psi F^{(i)}\|_{L^{\infty}_{s}} \leq C 2^{i(s+1/2+\epsilon)} \| F \|_{L^2} \leq C_{i,s, \epsilon} .
\end{align}
As we are proving compactness for each $i \geq 0$, so our constants $C_{i,s,\epsilon}$ may depend on $i$.

Thus combining estimates (\ref{Inequality: Condition 3 for phi F}), (\ref{Inequality: Condition 3 for psi F}) and (\ref{Compact commutator density reduction}), as $j \to \infty$ we conclude that
\begin{align*}
    \|\chi_{G\setminus B(0, 2 \cdot 2^{2j})} [b, F^{(i)}(\sqrt{\mathcal{L}})]f \|_{L^q(G)} < \epsilon.
\end{align*}
Not it only remains to check the condition (3) for $\mathcal{E}$. Once again using the density of $C_c^{\infty}(G)$ functions in $L^q(G)$ enough to check condition (3) for $f \in C_c^{\infty}(G)$. First we write 
\begin{align*}
    & [b, F^{(i)}(\sqrt{\mathcal{L}})]f(x,u) - \bigl([b, F^{(i)}(\sqrt{\mathcal{L}})]f\bigr)_{B((x,u),r)}\\
    & = \frac{1}{|B((x,u), r)| }\int_{B((x,u), r)} \left\{ [b, F^{(i)}(\sqrt{\mathcal{L}})]f(x,u)- [b, F^{(i)}(\sqrt{\mathcal{L}})]f(y,t) \right\}\, d(y,t) \\
    & = \frac{1}{|B(0, r)| } \int_{B(0, r)} \left\{ [b, F^{(i)}(\sqrt{\mathcal{L}})]f(x,u)- [b, F^{(i)}(\sqrt{\mathcal{L}})]f((x,u)(y,t)) \right\}\, d(y,t) ,
\end{align*}
and then for $\varepsilon>0$, we divide it into four parts as follows.
\begin{align*}
  & [b, F^{(i)}(\sqrt{\mathcal{L}})]f(x,u)- [b, F^{(i)}(\sqrt{\mathcal{L}})]f((x,u)(y,t)) \\
  &= \int_G \mathcal{K}_{F^{(i)}(\sqrt{\mathcal{L}})} ((x',u')^{-1}(x,u)) (b(x,u)-b(x',u')) f(x',u') d(x',u') \\
  & \hspace{1cm} - \int_G \mathcal{K}_{F^{(i)}(\sqrt{\mathcal{L}})} ((x',u')^{-1}(x,u)(y,t)) (b((x,u)(y,t))-b(x',u')) f(x',u') d(x',u') \\
  &=: J_1 + J_2 + J_3 + J_4 .
\end{align*}
where
\begin{align*}
    J_1 & = \int_{\|((x',u')^{-1}(x,u))\| >  \varepsilon^{-1} \|(y,t)\|}
    \mathcal{K}_{F^{(i)}(\sqrt{\mathcal{L}})} ((x',u')^{-1}(x,u)) \\ 
    & \hspace{5cm} (b(x,u)- b((x,u)(y,t))) f(x',u')\, d(x',u') ,
\end{align*}
\begin{align*}
    J_2 & =\int_{\|((x',u')^{-1}(x,u))\| >  \varepsilon^{-1} \|(y,t)\|} ( b((x,u)(y,t))- b(x',u')) \\
    & \hspace{2cm} (\mathcal{K}_{F^{(i)}(\sqrt{\mathcal{L}})} ((x',u')^{-1}(x,u))-  \mathcal{K}_{F^{(i)}(\sqrt{\mathcal{L}})} ((x',u')^{-1}(x,u)(y,t))) f(x',u')\, d(x',u') ,
\end{align*}
\begin{align*}
    J_3 &= \int_{\|((x',u')^{-1}(x,u))\| \leq  \varepsilon^{-1} \|(y,t)\|}
    \mathcal{K}_{F^{(i)}(\sqrt{\mathcal{L}})} ((x',u')^{-1}(x,u))  (b(x,u)- b(x',u')) f(x',u')\, d(x',u') ,
\end{align*}
and
\begin{align*}
    J_4 &= - \int_{\|((x',u')^{-1}(x,u))\| \leq  \varepsilon^{-1} \|(y,t)\|}
    \mathcal{K}_{F^{(i)}(\sqrt{\mathcal{L}})} ((x',u')^{-1}(x,u)(y,t)) \\ 
    & \hspace{5cm} (b((x,u)(y,t))- b(x',u')) f(x',u')\, d(x',u') .
\end{align*}

We now estimate each term $J_1$, $J_2$, $J_3$ and $J_4$ separately. For the term $J_1$, using \eqref{equation: decomposition of F^i} we first write
\begin{align*}
    |J_1| &\leq C \left[\int_{\|((x',u')^{-1}(x,u))\| >  \varepsilon^{-1} \|(y,t)\|} |\mathcal{K}_{(\psi F^{(i)})(\sqrt{\mathcal{L}})} ((x',u')^{-1}(x,u))| |f(x',u')|\, d(x',u') \right. \\
    & \hspace{1cm} + \left. \sum_{\iota \geq 0} \int_{\|((x',u')^{-1}(x,u))\| >  \varepsilon^{-1} \|(y,t)\|} |\mathcal{K}_{(\varphi_{\iota} F^{(i)})(\sqrt{\mathcal{L}})} ((x',u')^{-1}(x,u))| |f(x',u')|\, d(x',u') \right] \\
    & \hspace{8cm} \times |b(x,u)- b((x,u)(y,t))| \\
    &=:J_{11} + J_{12} .
\end{align*}
Let $0 < \tau < 1$. Then using mean value theorem together with Proposition (\ref{Proposition: Weighted Plancherel with L infinity condition}) and (\ref{equation: decay of Fi with phi}), for any $\Bar{N}>0$ we see that
\begin{align*}\label{estimate for J12}
    & |J_{12}| \\
    &\nonumber \leq C \|(y,t)\| \sum_{\iota \geq 0} \int_{\|((x',u')^{-1}(x,u))\| >  \varepsilon^{-1} \|(y,t)\|} |\mathcal{K}_{(\varphi_{\iota} F^{(i)})(\sqrt{\mathcal{L}})} ((x',u')^{-1}(x,u))| |f(x',u')|\, d(x',u') \\
    &\leq C \|(y,t)\| \sum_{\iota \geq 0} \sum_{k=0}^{\infty} \int_{2^{k} \varepsilon^{-1} \|(y,t)\|< \|((x',u')^{-1}(x,u))\| \leq 2^{k+1} \varepsilon^{-1} \|(y,t)\|}\nonumber \\
    & \hspace{2cm} \frac{(1+2^{\iota} \|(x',u')^{-1}(x,u)\| )^{Q+\tau} |\mathcal{K}_{(\varphi_{\iota} F^{(i)})(\sqrt{\mathcal{L}})} ((x',u')^{-1}(x,u))|}{(1+2^{\iota}\|(x',u')^{-1}(x,u)\|)^{Q+\tau}} |f(x',u')|\, d(x',u') \nonumber \\
    &\leq C \|(y,t)\| \sum_{\iota \geq 0} \sum_{k=0} ^{\infty} \frac{2^{\iota Q} \|\phi_{\iota} F^{(i)} (2^{\iota} \cdot)\|_{L^{\infty}_{Q + \tau + \epsilon} } |B((x,u), 2^{k+1} \varepsilon^{-1} \|(y,t)\|)|  }  {\{2^{\iota} 2^k \varepsilon^{-1} \|(y,t)\|\}^{Q + \tau}} \times\nonumber \\
    & \hspace{3cm} \frac{1}{|B((x,u), 2^{k+1} \varepsilon^{-1} \|(y,t)\|)| }\, \int_{\varrho((x',u'),(x,u)) \leq 2^{k+ 1} \varepsilon^{-1} \|(y,t)\|} |f(x',u')|\, d(x',u') \nonumber\\
    & \leq C \|(y,t)\| \sum_{\iota \geq 0} \sum_{k=0} ^{\infty} \frac{2^{\iota Q} 2^{-\Bar{N}(i+\iota)} \bigl( 2^{k+1} \varepsilon^{-1} \|(y,t)\| \bigr)^Q }  {\{ 2^{\iota} 2^k \varepsilon^{-1}\|(y,t)\|\}^{Q + \tau}}  \mathcal{M}f(x,u) \nonumber  \\
    & \leq  C \varepsilon^{\tau} \|(y,t)\|^{1-\tau} \mathcal{M}f(x,u) \left(\sum_{k=1} ^{\infty} \frac{1}{2^{k\tau}} \right) \left(\sum_{\iota \geq 0} \frac{1}{2^{\iota(\Bar{N}+\tau)}}  \right)\nonumber \\
    & \leq C \varepsilon^{\tau} \|(y,t)\|^{1-\tau} \mathcal{M}f(x,u) ,\nonumber
\end{align*}
where $\mathcal{M}$ denote the Hardy-Littlewood maximal operator defined on $G$ relative to the distance $\varrho$. 

On the other hand, using Proposition (\ref{Proposition: Weighted Plancherel with L infinity condition}), (\ref{equation: F^{i} with psi}) proceeding as above, we have 
\begin{align*}
 |J_{11}| \leq C \varepsilon^{\tau} 2^{i(Q +\tau + 1 + 2\epsilon)} \|(y,t)\|^{1-\tau} \mathcal{M}f(x,u).   
\end{align*}
Therefore combining both estimates $J_{11}$ and $J_{12}$, we get
\begin{align*}
    |J_1| &\leq C \varepsilon^{\tau} \|(y,t)\|^{1-\tau} (1 +  2^{i(Q +\tau + 1 + 2\epsilon)}) \mathcal{M}f(x,u) \leq C  \varepsilon^{\tau} \|(y,t)\|^{1-\tau} \mathcal{M}f(x,u) .
\end{align*}

Similarly as  in $J_1$,  we also break $J_2$ into two parts $J_{21}$ and $J_{22}$ corresponding to $\psi$ and $\sum_{\iota \geq 0} \phi_{\iota}$. Let $\gamma_0$ be a horizontal curve joining between two points $(x,u)$ and $((x,u)(y,t))$. Then for $J_{22}$, using mean value theorem combining with Proposition (\ref{Proposition: Weighted Plancherel with L infinity condition}) and (\ref{equation: decay of Fi with phi}), we get 
\begin{align*}
 & |J_{22}| \\
 &\leq C \sum_{\iota \geq 0} \int_{\|((x',u')^{-1}(x,u))\| >  \varepsilon^{-1} \|(y,t)\|} | b((x,u)(y,t))- b(x',u')| \\
 & \hspace{1.5cm} |\mathcal{K}_{(\varphi_{\iota} F^{(i)})(\sqrt{\mathcal{L}})} ((x',u')^{-1}(x,u))-  \mathcal{K}_{(\varphi_{\iota} F^{(i)})(\sqrt{\mathcal{L}})} ((x',u')^{-1}(x,u)(y,t))| f(x',u')\, d(x',u') \\
 & \leq C \|b\|_{L^{\infty}} \|(y,t)\| \sum_{\iota \geq 0} \int_{\|((x',u')^{-1}(x,u))\| >  \varepsilon^{-1} \|(y,t)\|} \int_{0}^{1} | X \mathcal{K}_{(\varphi_{\iota} F^{(i)})(\sqrt{\mathcal{L}})} ((x',u')^{-1}\gamma_0(s))| \\
 & \hspace{10cm} |f(x',u')|\, d(x',u') \ ds \\
 & \leq C \,  \|(y,t)\| \sum_{\iota \geq 0}  \sum_{k \geq 0} \int_{2^{k+1} \varepsilon^{-1} \|(y,t)\|\geq \|((x',u')^{-1}(x,u))\| > 2^k \varepsilon^{-1} \|(y,t)\|} \int_0^1 \\
 & \hspace{1.5cm} \frac{  (1 + 2^{\iota} \|((x',u')^{-1}\gamma_0(s))\|)^{Q+ \tau} | X \mathcal{K}_{(\varphi_{\iota} F^{(i)})(\sqrt{\mathcal{L}})} ((x',u')^{-1}\gamma_0(s))| }{(1 + 2^{\iota} \|((x',u')^{-1}\gamma_0(s))\|)^{Q+ \tau}}\, |f(x',u')|\, d(x',u') \ ds \\
 & \leq C \,  \|(y,t)\| \sum_{\iota \geq 0}  \sum_{k \geq 0}  \frac{2^{\iota (Q+1)} \|\phi_{\iota} F^{(i)} (2^{\iota} \cdot)\|_{L^{\infty}_{Q + \tau + \epsilon} } |B((x,u), 2^{k+1} \varepsilon^{-1} \|(y,t)\| ) | } { (  2^{\iota} 2^k \varepsilon^{-1} \|(y,t)\| )^{Q + \tau}}\\
 & \hspace{3cm}   \frac{1}{ |B((x,u), 2^{k+1} \varepsilon^{-1} \|(y,t)\| ) | } \int_{\varrho((x',u'),(x,u)) < 2^{k+1} \varepsilon^{-1} \|(y,t)\|  } |f(x',u')|\, d(x',u') \\
 & \leq C \,  \|(y,t)\|^{1-\tau} \, \varepsilon^{\tau} \mathcal{M}f(x,u),
\end{align*}
provided $\|(y,t)\|$ is very small and $\Bar{N}>1$. Here in the second last inequality we have used the fact $\|((x',u')^{-1}\gamma_0(s))\| \sim \|((x',u')^{-1}(x,u))\|$ if $\|(y,t)\|$ is very small. (Actually later we will estimate for $\|(y,t)\| \leq r$ and $r \to 0$.)

In a similar fashion, we also get
\begin{align*}
    |J_{21}| \leq C \varepsilon^{\tau} 2^{i(Q +\tau + 1 + 2\epsilon)} \|(y,t)\|^{1-\tau} \mathcal{M}f(x,u) .
\end{align*}
Then combining both the estimates for $J_{21}$ and $J_{22}$ we get, 
\begin{align*}
    |J_2| &\leq C \varepsilon^{\tau} \|(y,t)\|^{1-\tau} (1 +  2^{i(Q +\tau + 1 + 2\epsilon)}) \mathcal{M}f(x,u) \leq C_i \varepsilon^{\tau} \|(y,t)\|^{1-\tau}  \mathcal{M}f(x,u).
\end{align*}
In case of $J_3$, similarly as above, we again write $J_3$ as a sum of $J_{31}$ and $J_{32}$. Using mean value theorem, Proposition \ref{Proposition: Weighted Plancherel with L infinity condition} and (\ref{equation: F^{i} with psi}) we see that 
\begin{align*}
  & |J_{31}| \leq C \int_{\|((x',u')^{-1}(x,u))\| \leq  \varepsilon^{-1} \|(y,t)\|} \|((x',u')^{-1}(x,u))\| |K_{(\psi F^{(i)})(\sqrt{\mathcal{L}})} ((x',u')^{-1}(x,u))| \\ 
  & \hspace{9cm} \,|f(x',u')|\, d(x',u') \nonumber\\
  &\leq C \varepsilon^{-1} \|(y,t)\| \|\psi F^{(i)}\|_{L^{\infty}_{1}}\, |B((x,u),\varepsilon^{-1} \|(y,t)\|)|\nonumber \\
  & \hspace{5cm} \frac{1}{|B((x,u),\varepsilon^{-1} \|(y,t)\| )|} \int_{  B((x,u) ,\varepsilon^{-1} \|(y,t)\| )}   \,|f(x',u')|\, d(x',u') \nonumber\\
  &  \leq C 2^{i (3/2+\epsilon)} \varepsilon^{-1} \|(y,t)\| \, (\varepsilon^{-1} \|(y,t)\|)^Q\, \mathcal{M}f(x,u)\nonumber\\
  &  \leq C 2^{i (3/2+\epsilon)} (\varepsilon^{-1} \|(y,t)\|)^{Q+1}   \mathcal{M}f(x,u)\nonumber.
\end{align*}

For $J_{32}$, proceeding similarly as in $J_{31}$, we obtain that $|J_{32}| \leq C (\varepsilon^{-1} \|(y,t)\|)^{Q+1} \, \mathcal{M}f(x,u)$. Thus, combining both $J_{31}$ and $J_{32}$ we get
\begin{align*}
    |J_3| &\leq C(1+ 2^{i(1+2\epsilon)}) (\varepsilon^{-1} \|(y,t)\|)^{Q+1} \, \mathcal{M}f(x,u)  \leq C_i  (\varepsilon^{-1} \|(y,t)\|)^{Q+1} \, \mathcal{M}f(x,u).
\end{align*}

For $J_4$, again we decompose as in $J_1$ into two parts. Using Proposition \ref{Proposition: Weighted Plancherel with L infinity condition}, (\ref{equation: F^{i} with psi}), mean value theorem, and left invariance of $\varrho$, we see that
\begin{align*}
  & |J_{41}| \leq \int_{\|((x',u')^{-1}(x,u))\| \leq  \varepsilon^{-1} \|(y,t)\|} |\mathcal{K}_{(\psi F^{(i)})(\sqrt{\mathcal{L}})} ((x',u')^{-1}(x,u)(y,t))| \\ 
  & \hspace{7cm} \times |b((x,u)(y,t))- b(x',u')| |f(x',u')|\, d(x',u') \nonumber\\
  &\nonumber \leq C \int_{\varrho((x',u'), (x,u)) \leq  \varepsilon^{-1} \|(y,t)\|} 
  |\mathcal{K}_{(\psi F^{(i)})(\sqrt{\mathcal{L}})} ((x',u')^{-1}(x,u)(y,t))| \\
  & \hspace{7cm} \times \varrho((x,u)(y,t), (x',u'))  \, |f(x',u')|\, d(x',u') \nonumber\\
  & \leq C (\varepsilon^{-1} \|(y,t)\| + \varrho((x,u)(y,t),(x,u)) \|\psi F^{(i)}\|_{L^{\infty}_{1}} \int_{\varrho((x',u'),(x,u)) \leq  \varepsilon^{-1} \|(y,t)\|}\, |f(x',u')|\, d(x',u')  \nonumber\\ 
   & \leq C (\varepsilon^{-1} \|(y,t)\|)^{Q+1} 2^{i (3/2+\epsilon)}\, \mathcal{M}f(x,u) .
\end{align*}
A similar calculation also shows that $|J_{42}| \leq (\varepsilon^{-1} \|(y,t)\|)^{Q+1}\, \mathcal{M}f(x,u)$. Thus, combining both $J_{41}$ and $J_{42}$, we get
\begin{align*}
     |J_{4}| &\leq C_i  (\varepsilon^{-1} \|(y,t)\|)^{Q+1}\, \mathcal{M}f(x,u) .
\end{align*}
Now, gathering all the  estimates of $J_1, J_2, J_3$ and $J_4$, and using the $L^p$ ($p>1$) boundedness of $\mathcal{M}$, we see that  
\begin{align}
    & \int_G \left|  [b, F^{(i)} (\sqrt{\mathcal{L}})]f(x,u) - ([b, F^{(i)} (\sqrt{\mathcal{L}})]f)_{B((x,u,) r)} \right|^p\, d(x,u) \nonumber\\
    & \leq \int_G  \frac{1}{|B(0, r)|^p}  \Bigl( \int_{B(0, r)} \bigl( |J_1| +  |J_2| + |J_3|+ |J_4| \bigr) \, d(y,t)\, \Bigr)^p\, d(x,u) \nonumber\\
   & \leq C_i  \Bigl( \varepsilon ^\tau r^{1-\tau} + (\varepsilon^{-1} r)^{Q+1} \,  \Bigr)^p\, \int_G \mathcal{M}f (x,u) ^p d(x,u) \nonumber\\
   & \leq C  \Bigl( \varepsilon ^\tau r^{1-\tau} + (\varepsilon^{-1} r)^{Q+1} \,  \Bigr)^p \int_G |f (x,u)| ^p d(x,u) .\nonumber
\end{align}
Finally, taking $r < \varepsilon^2$, as $r \to 0$ we conclude that
\begin{align}
    & \int_G \left|  [b, F^{(i)} (\sqrt{\mathcal{L}})]f(x,u) - ([b, F^{(i)} (\sqrt{\mathcal{L}})]f)_{B((x,u), r)} \right|^p\, d(x,u) < \epsilon.\nonumber
\end{align}
This shows that $ [b, F^{(i)} (\sqrt{\mathcal{L}})]$ satisfies condition $(3)$ of Theorem (\ref{Theorem: Kolmogorov-Riesz compactness theorem}). This completes the proof of Theorem \ref{Theorem: Compactness of Bochner-Riesz commutator}.
\end{proof}

\section*{Acknowledgments}
The first author was supported from his NBHM post-doctoral fellowship, DAE, Government of India. The second author would like to acknowledge the support of the Prime Minister's Research Fellows (PMRF) supported by Ministry of Education, Government of India. We would like to thank Sayan Bagchi for his careful reading of the manuscript and numerous useful suggestions as well as discussions.


\begin{thebibliography}{MMNG23}

\bibitem[ACMM20]{Ahrens_Cowling_Martini_Muller_Quaternionic_Sphere_2020}
Julian Ahrens, Michael~G. Cowling, Alessio Martini, and Detlef M\"uller, \emph{Quaternionic spherical harmonics and a sharp multiplier theorem on quaternionic spheres}, Math. Z. \textbf{294} (2020), no.~3-4, 1659--1686. \MR{4074054}

\bibitem[Ale94]{Alexopoulos_Spectral_multi_poly_growth_1994}
G.~Alexopoulos, \emph{Spectral multipliers on {L}ie groups of polynomial growth}, Proc. Amer. Math. Soc. \textbf{120} (1994), no.~3, 973--979. \MR{1172944}

\bibitem[BCH17]{Bu_Chen_Hu_Bochner_Riesz_commutator_2017}
Rui Bu, Jiecheng Chen, and Guoen Hu, \emph{Compactness for the commutator of {B}ochner-{R}iesz operator}, Acta Math. Sci. Ser. B (Engl. Ed.) \textbf{37} (2017), no.~5, 1373--1384. \MR{3683901}

\bibitem[BD21]{Bui_Duong_Spectral_multiplier__Trieble_Lizorkin_2021}
The~Anh Bui and Xuan~Thinh Duong, \emph{Spectral multipliers of self-adjoint operators on {B}esov and {T}riebel-{L}izorkin spaces associated to operators}, Int. Math. Res. Not. IMRN (2021), no.~23, 18181--18224. \MR{4349231}

\bibitem[CC13]{Casarino_Ciatti_Restriction_Metivier_groups_2013}
Valentina Casarino and Paolo Ciatti, \emph{A restriction theorem for {M}\'etivier groups}, Adv. Math. \textbf{245} (2013), 52--77. \MR{3084423}

\bibitem[CCH{\etalchar{+}}18]{Chaffee_Chen_Han_Torres_Ward_Compact_bilinear_2018}
Lucas Chaffee, Peng Chen, Yanchang Han, Rodolfo~H. Torres, and Lesley~A. Ward, \emph{Characterization of compactness of commutators of bilinear singular integral operators}, Proc. Amer. Math. Soc. \textbf{146} (2018), no.~9, 3943--3953. \MR{3825847}

\bibitem[CCM19]{Casarino_Ciatti_Martini_Grushin_Sphere_2019}
Valentina Casarino, Paolo Ciatti, and Alessio Martini, \emph{From refined estimates for spherical harmonics to a sharp multiplier theorem on the {G}rushin sphere}, Adv. Math. \textbf{350} (2019), 816--859. \MR{3948686}

\bibitem[CDLW19]{Chen_Duong_Li_Wu_Compactness_Riesz_Transform_Stratified_group_2019}
Peng Chen, Xuan~Thinh Duong, Ji~Li, and Qingyan Wu, \emph{Compactness of {R}iesz transform commutator on stratified {L}ie groups}, J. Funct. Anal. \textbf{277} (2019), no.~6, 1639--1676. \MR{3985516}

\bibitem[Chr91]{Christ_Spectral_multiplier_Nilpotent_1991}
Michael Christ, \emph{{$L^p$} bounds for spectral multipliers on nilpotent groups}, Trans. Amer. Math. Soc. \textbf{328} (1991), no.~1, 73--81. \MR{1104196}

\bibitem[CKS11]{Cowling_Klima_Sikora_Kohn_Laplacian_On_Sphere_2011}
Michael~G. Cowling, Oldrich Klima, and Adam Sikora, \emph{Spectral multipliers for the {K}ohn sublaplacian on the sphere in {$\Bbb C^n$}}, Trans. Amer. Math. Soc. \textbf{363} (2011), no.~2, 611--631. \MR{2728580}

\bibitem[CO16]{Chen_Ouhabaz_Bochner-Riesz_Grushin_2016}
Peng Chen and El~Maati Ouhabaz, \emph{Weighted restriction type estimates for {G}rushin operators and application to spectral multipliers and {B}ochner-{R}iesz summability}, Math. Z. \textbf{282} (2016), no.~3-4, 663--678. \MR{3473637}

\bibitem[COSY16]{Chen_Ouhabaz_Sikora_Yan_Restriction_Estimate_Bochner-Riesz_2016}
Peng Chen, El~Maati Ouhabaz, Adam Sikora, and Lixin Yan, \emph{Restriction estimates, sharp spectral multipliers and endpoint estimates for {B}ochner-{R}iesz means}, J. Anal. Math. \textbf{129} (2016), 219--283. \MR{3540599}

\bibitem[CTW21]{Chen_Tian_Ward_Commutator_Bochner_Riesz_Elliptic_2021}
Peng Chen, Xiaoxiao Tian, and Lesley~A. Ward, \emph{The commutators of {B}ochner-{R}iesz operators for elliptic operators}, Tohoku Math. J. (2) \textbf{73} (2021), no.~3, 403--419. \MR{4315507}

\bibitem[DLL14]{Ding_lee_Lin_Hardy_BMO_General_Sets_2014}
Yong Ding, Ming-Yi Lee, and Chin-Cheng Lin, \emph{{$\mathscr{A}_{p,\Bbb{E}}$} weights, maximal operators, and {H}ardy spaces associated with a family of general sets}, J. Fourier Anal. Appl. \textbf{20} (2014), no.~3, 608--667. \MR{3217490}

\bibitem[DLM{\etalchar{+}}18]{Duong_Li_Mo_Wu_Yang_compactness_Bessel_2018}
Xuan~Thinh Duong, Ji~Li, Suzhen Mao, Huoxiong Wu, and Dongyong Yang, \emph{Compactness of {R}iesz transform commutator associated with {B}essel operators}, J. Anal. Math. \textbf{135} (2018), no.~2, 639--673. \MR{3829612}

\bibitem[Fef70]{Fefferman_Strongly_Singular_integral_1970}
Charles Fefferman, \emph{Inequalities for strongly singular convolution operators}, Acta Math. \textbf{124} (1970), 9--36. \MR{257819}

\bibitem[Fef73]{Fefferman_Spherical_multiplier_1973}
\bysame, \emph{A note on spherical summation multipliers}, Israel J. Math. \textbf{15} (1973), 44--52. \MR{320624}

\bibitem[Heb93]{Hebisch_Spectral_Multiplier_Heisenberg_1993}
Waldemar Hebisch, \emph{Multiplier theorem on generalized {H}eisenberg groups}, Colloq. Math. \textbf{65} (1993), no.~2, 231--239. \MR{1240169}

\bibitem[HL96]{Hu_Lu_Bochner_Riesz_Commutator_1996}
Guoen Hu and Shanzhen Lu, \emph{The commutator of the {B}ochner-{R}iesz operator}, Tohoku Math. J. (2) \textbf{48} (1996), no.~2, 259--266. \MR{1387819}

\bibitem[LW11]{Liu_Wang_Restriction_Heisenberg_Type_2011}
Heping Liu and Yingzhan Wang, \emph{A restriction theorem for the {H}-type groups}, Proc. Amer. Math. Soc. \textbf{139} (2011), no.~8, 2713--2720. \MR{2801610}

\bibitem[LX07]{Lu_Xia_Bochner_Riesz_Commutator_2007}
Shanzhen Lu and Xia Xia, \emph{A note on commutators of {B}ochner-{R}iesz operator}, Front. Math. China \textbf{2} (2007), no.~3, 439--446. \MR{2327014}

\bibitem[LY13]{Lu_Yan_Bochner_Riesz_Book_2013}
Shanzhen Lu and Dunyan Yan, \emph{Bochner-{R}iesz means on {E}uclidean spaces}, World Scientific Publishing Co. Pte. Ltd., Hackensack, NJ, 2013. \MR{3156282}

\bibitem[LZ18]{Liu_Zhang_Metivier_joint_functional_2018}
Heping Liu and An~Zhang, \emph{Restriction theorems on {M}\'etiver groups associated to joint functional calculus}, Chinese Ann. Math. Ser. B \textbf{39} (2018), no.~6, 1017--1032. \MR{3870762}

\bibitem[M\"90]{Muller_restriction_Heisenberg_group_1990}
Detlef M\"uller, \emph{A restriction theorem for the {H}eisenberg group}, Ann. of Math. (2) \textbf{131} (1990), no.~3, 567--587. \MR{1053491}

\bibitem[Mar12]{Marini_Multiplier_polynomial_Growth_2012}
Alessio Martini, \emph{Analysis of joint spectral multipliers on {L}ie groups of polynomial growth}, Ann. Inst. Fourier (Grenoble) \textbf{62} (2012), no.~4, 1215--1263. \MR{3025742}

\bibitem[Mar15]{Martini_Spectral_Multiplier_Heisenberg_Reiter_2015}
\bysame, \emph{Spectral multipliers on {H}eisenberg-{R}eiter and related groups}, Ann. Mat. Pura Appl. (4) \textbf{194} (2015), no.~4, 1135--1155. \MR{3357697}

\bibitem[MM90]{Mauceri_Meda_Multipliers_Stritified_group_1990}
Giancarlo Mauceri and Stefano Meda, \emph{Vector-valued multipliers on stratified groups}, Rev. Mat. Iberoamericana \textbf{6} (1990), no.~3-4, 141--154. \MR{1125759}

\bibitem[MM14a]{Martini_Muller_Multiplier_Grushin_2014}
Alessio Martini and Detlef M\"{u}ller, \emph{A sharp multiplier theorem for {G}rushin operators in arbitrary dimensions}, Rev. Mat. Iberoam. \textbf{30} (2014), no.~4, 1265--1280. \MR{3293433}

\bibitem[MM14b]{Martini_Muller_New_Class_Two_Step_Stratified_Groups_2014}
Alessio Martini and Detlef M\"uller, \emph{Spectral multiplier theorems of {E}uclidean type on new classes of two-step stratified groups}, Proc. Lond. Math. Soc. (3) \textbf{109} (2014), no.~5, 1229--1263. \MR{3283616}

\bibitem[MMNG23]{Martini_Muller_Golo_Spectral_Multiplier_Lower_Regularity_2023}
Alessio Martini, Detlef M\"uller, and Sebastiano Nicolussi~Golo, \emph{Spectral multipliers and wave equation for sub-{L}aplacians: lower regularity bounds of {E}uclidean type}, J. Eur. Math. Soc. (JEMS) \textbf{25} (2023), no.~3, 785--843. \MR{4577953}

\bibitem[MS]{Molla_Singh_Bochner_Riesz_commutator_Grushin}
Nurul Molla and Joydwip Singh, \emph{Bochner-riesz commutators for grushin operators}, (In preparation).

\bibitem[MS94]{Muller_Stein_Spectral_Multiplier-Heisenberg_1994}
D.~M\"{u}ller and E.~M. Stein, \emph{On spectral multipliers for {H}eisenberg and related groups}, J. Math. Pures Appl. (9) \textbf{73} (1994), no.~4, 413--440. \MR{1290494}

\bibitem[MS04]{Muller_Seeger_Singular_Spherical_maximal_operator_Nilpotent_2004}
Detlef M\"uller and Andreas Seeger, \emph{Singular spherical maximal operators on a class of two step nilpotent {L}ie groups}, Israel J. Math. \textbf{141} (2004), 315--340. \MR{2063040}

\bibitem[MS12]{Martini_Sikora_Grushin_Weighted_Plancherel_2012}
Alessio Martini and Adam Sikora, \emph{Weighted {P}lancherel estimates and sharp spectral multipliers for the {G}rushin operators}, Math. Res. Lett. \textbf{19} (2012), no.~5, 1075--1088. \MR{3039831}

\bibitem[Nie22]{Niedorf_Bochner_Riesz_Grushin_2022}
Lars Niedorf, \emph{A {$p$}-specific spectral multiplier theorem with sharp regularity bound for {G}rushin operators}, Math. Z. \textbf{301} (2022), no.~4, 4153--4173. \MR{4449743}

\bibitem[Nie23]{Niedorf_Restriction_Stratified_group_2023}
\bysame, \emph{Restriction type estimates and spectral multipliers on m$\backslash$'etivier groups}, arXiv preprint arXiv:2304.12960 (2023).

\bibitem[Nie24a]{Niedorf_Spectral_Multiplier_Heisenber_Group_2024}
\bysame, \emph{An {$L^p$}-spectral multiplier theorem with sharp {$p$}-specific regularity bound on {H}eisenberg type groups}, J. Fourier Anal. Appl. \textbf{30} (2024), no.~2, Paper No. 22, 35. \MR{4728249}

\bibitem[Nie24b]{Niedorf_Metivier_group_2023}
\bysame, \emph{Spectral multipliers on m$\backslash$'etivier groups}, arXiv preprint arXiv:2412.07920 (2024).

\bibitem[Ouh05]{Ouhabaz_Analysis_heat_equation_domain_2005}
El~Maati Ouhabaz, \emph{Analysis of heat equations on domains}, London Mathematical Society Monographs Series, vol.~31, Princeton University Press, Princeton, NJ, 2005. \MR{2124040}

\bibitem[SW01]{Sikora_Wright_Imaginary_Power_Laplacian_2001}
Adam Sikora and James Wright, \emph{Imaginary powers of {L}aplace operators}, Proc. Amer. Math. Soc. \textbf{129} (2001), no.~6, 1745--1754. \MR{1814106}

\bibitem[TDOS02]{Duong_Ouhabaz_Sikora_Sharp_Multiplier_2002}
Xuan Thinh~Duong, El~Maati Ouhabaz, and Adam Sikora, \emph{Plancherel-type estimates and sharp spectral multipliers}, J. Funct. Anal. \textbf{196} (2002), no.~2, 443--485. \MR{1943098}

\bibitem[Tha91]{Thangavelu_Restriction_product_Heisenberg_group_1991}
S.~Thangavelu, \emph{Restriction theorems for the {H}eisenberg group}, J. Reine Angew. Math. \textbf{414} (1991), 51--65. \MR{1092623}

\bibitem[Tha93]{Thangavelu_Lectures_Hermite_1993}
Sundaram Thangavelu, \emph{Lectures on {H}ermite and {L}aguerre expansions}, Mathematical Notes, vol.~42, Princeton University Press, Princeton, NJ, 1993, With a preface by Robert S. Strichartz. \MR{1215939}

\bibitem[TYYZ23]{Tao_Yang_Yuan_Zhang_compact_ball_Banach_2023}
Jin Tao, Dachun Yang, Wen Yuan, and Yangyang Zhang, \emph{Compactness characterizations of commutators on ball {B}anach function spaces}, Potential Anal. \textbf{58} (2023), no.~4, 645--679. \MR{4568877}

\bibitem[Uch78]{Uchiyama_compactness_commutator_1978}
Akihito Uchiyama, \emph{On the compactness of operators of {H}ankel type}, Tohoku Math. J. (2) \textbf{30} (1978), no.~1, 163--171. \MR{467384}

\bibitem[Var88]{Varopoulos_Analysis_Lie_Group_1988}
N.~Th. Varopoulos, \emph{Analysis on {L}ie groups}, J. Funct. Anal. \textbf{76} (1988), no.~2, 346--410. \MR{924464}

\end{thebibliography}

\newcommand{\etalchar}[1]{$^{#1}$}
\providecommand{\bysame}{\leavevmode\hbox to3em{\hrulefill}\thinspace}
\providecommand{\MR}{\relax\ifhmode\unskip\space\fi MR }
\providecommand{\MRhref}[2]{%
  \href{http://www.ams.org/mathscinet-getitem?mr=#1}{#2}
}
\providecommand{\href}[2]{#2}

\end{document}